\documentclass[reqno]{amsart}
\usepackage{hyperref}
\usepackage{amsmath}
\usepackage[safeinputenc=true,backref=true,style=alphabetic,firstinits,useprefix=true,url=false,maxnames=6]{biblatex}

\addbibresource{straus-amenable.bib}
\usepackage{mathrsfs}
\usepackage{amsthm}
\usepackage{amssymb}



\numberwithin{equation}{section}
\newtheorem{theorem}[equation]{Theorem}
\newtheorem*{claim}{\textit{Claim}}
\newtheorem{proposition}[equation]{Proposition}
\newtheorem{corollary}[equation]{Corollary}
\newtheorem{lemma}[equation]{Lemma}

\theoremstyle{definition}
\newtheorem{example}[equation]{Example}
\newtheorem{definition}[equation]{Definition}

\newcommand{\define}[1]{\textit{#1}}
\newcommand{\erdos}{Erd\H{o}s}
\newcommand{\szemeredi}{Szemer\'{e}di}
\newcommand{\folner}{F\o{}lner}

\newcommand{\cstar}{C$^*$}
\newcommand{\lcsc}{locally compact, second countable}
\newcommand{\lcsca}{locally compact, second countable, amenable}
\newcommand{\lcscWMa}{locally compact, second countable, WM, amenable}
\newcommand{\lmult}{l}
\newcommand{\rmult}{r}
\newcommand{\modular}{\triangle}
\renewcommand{\epsilon}{\varepsilon}
\newcommand{\intd}{\,\mathrm{d}}
\newcommand{\haar}{\mathrm{m}}
\DeclareMathOperator{\condexp}{\mathbb{E}}
\newcommand{\condex}[2]{\condexp({#1}\vert{#2})}
\newcommand{\N}{\mathbb{N}}
\newcommand{\Z}{\mathbb{Z}}
\newcommand{\R}{\mathbb{R}}

\newcommand{\PSL}{\mathrm{PSL}}
\newcommand{\SO}{\mathrm{SO}}
\renewcommand{\emptyset}{\varnothing}
\newcommand{\SetRec}{\mathcal{R}}
\newcommand{\Syndetic}{\mathcal{S}}
\newcommand{\Thick}{\mathcal{T}}
\newcommand{\PWSyndetic}{\mathcal{P}}
\newcommand{\idG}{\mathrm{id}_{G}}
\newcommand{\ind}[1]{\mathbf{1}_{#1}} 
\def\<{\left\langle}
\def\>{\right\rangle}

\DeclareMathOperator{\Cb}{C_{b}}
\DeclareMathOperator*{\ap}{AP}

\DeclareMathOperator*{\clim}{C-lim}
\DeclareMathOperator*{\dlim}{D-lim}
\DeclareMathOperator*{\uclim}{UC-lim}
\DeclareMathOperator*{\udlim}{UD-lim}
\DeclareMathOperator{\cont}{C}
\DeclareMathOperator{\contc}{C_c}
\DeclareMathOperator{\conv}{\star}
\DeclareMathOperator{\dec}{D}
\DeclareMathOperator{\dens}{d}

\DeclareMathOperator{\inc}{I}
\DeclareMathOperator{\inv}{\mathscr{I}}

\DeclareMathOperator{\kron}{\mathscr{K}}
\DeclareMathOperator{\wm}{\mathscr{W}}
\DeclareMathOperator{\lowerdens}{\underline{\dens}}

\DeclareMathOperator{\lp}{L}

\DeclareMathOperator{\ret}{R}

\DeclareMathOperator{\symdiff}{\triangle}

\DeclareMathOperator{\upperdens}{\overline{\dens}}
\DeclareMathOperator{\upbdens}{d^*}
\DeclareMathOperator{\fs}{FS}
\DeclareMathOperator{\fp}{FP}
\DeclareMathOperator{\fpi}{FPI}
\DeclareMathOperator{\fpd}{FPD}

\begin{document}

\title{Finite Products Sets and Minimally Almost Periodic Groups}
\author{V.~Bergelson}
\email{vitaly@math.ohio-state.edu}
\thanks{The first author gratefully acknowledges the support of the NSF under grant DMS-1162073.}
\author{C.~Christopherson}
\email{cory@math.ohio-state.edu}
\author{D.~Robertson}
\email{robertson@math.ohio-state.edu}
\address{Department of Mathematics\\
  The Ohio State University\\
  231 West 18th Avenue\\
  Columbus\\
  OH 43210-1174\\
  USA}
\author{P.~Zorin-Kranich}
\email{pzorin@math.huji.ac.il}
\address{Institute of Mathematics\\
  Hebrew University\\
  Givat Ram\\
  Jerusalem, 91904\\
  Israel}
\date{\today}

\begin{abstract}
We construct, in \lcsca{} groups, sets with large density that fail to have certain combinatorial properties.
For the property of being a shift of a set of measurable recurrence we show that this is possible when the group does not have cocompact von Neumann kernel.
For the stronger property of being piecewise-syndetic we show that this is always possible.

For minimally almost periodic, \lcsca{} groups, we prove that any left dilation of a positive density set by an open neighborhood of the identity contains a set of measurable recurrence, and that the same result holds, up to a shift, when the von Neumann kernel is cocompact.
This leads to a trichotomy for \lcsca{} group based on combinatorial properties of large sets.
We also prove, using a two-sided Furstenberg correspondence principle, that any two-sided dilation of a positive density set contains a two-sided finite products set.
\end{abstract}

\maketitle

\section{Introduction}
\label{sec:Introduction}

Given a sequence $x_n$ in $\N = \{1,2,\dots \}$ the \define{finite sums set} or \define{IP set} generated by $x_n$ is defined by
\begin{equation*}
\fs(x_n) = \bigg\{ \sum_{n \in \alpha} x_n \,:\, \emptyset \ne \alpha \subset \N \textrm{ finite} \bigg\}.
\end{equation*}
The following theorem, proved by N.~Hindman in 1974, confirmed a conjecture of Graham and Rothschild.
\begin{theorem}[Hindman, \cite{MR0349574}]
For any finite partition $\N = C_1 \cup \cdots \cup C_r$ there is some $i \in \{1,\dots,r\}$ and some sequence $x_n$ in $\N$ such that $\fs(x_n) \subset C_i$.
\end{theorem}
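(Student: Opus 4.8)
The plan is to give the Galvin--Glazer proof via idempotent ultrafilters in the Stone--\v{C}ech compactification $\beta\N$. Identify $\beta\N$ with the space of ultrafilters on $\N$, topologized so that the sets $\overline{A} = \{p \in \beta\N : A \in p\}$, for $A \subseteq \N$, form a basis of clopen sets. Addition on $\N$ extends to $\beta\N$ by declaring, for $p,q \in \beta\N$,
\[
  p + q = \bigl\{ A \subseteq \N : \{ n \in \N : A - n \in q \} \in p \bigr\},
\]
where $A - n = \{ m \in \N : m + n \in A \}$. One checks that $(\beta\N,+)$ is an associative, compact, right-topological semigroup: for each fixed $q$ the map $p \mapsto p + q$ is continuous. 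The first key input is Ellis's lemma: every compact right-topological semigroup contains an idempotent. Applying it to $(\beta\N,+)$ produces $p \in \beta\N$ with $p + p = p$.

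Given the partition $\N = C_1 \cup \cdots \cup C_r$, exactly one cell belongs to the ultrafilter $p$; call it $C_i$. The second key input is a ``pumping'' fact: if $p + p = p$ and $A \in p$, then $A^\star := \{ n \in A : A - n \in p \}$ again lies in $p$. This is immediate from the definition of $+$. Moreover, since $n + n = n$ has no solution in $\N$, the idempotent $p$ is nonprincipal, so every member of $p$ is infinite.

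Now construct recursively an increasing sequence $x_1 < x_2 < \cdots$ in $\N$ together with a decreasing chain $C_i = A_1 \supseteq A_2 \supseteq \cdots$ of members of $p$. Having chosen $x_1,\dots,x_{k-1}$ and $A_1,\dots,A_k$, pick $x_k \in A_k^\star$ with $x_k > x_{k-1}$ (possible because $A_k^\star \in p$ is infinite), and set $A_{k+1} = A_k \cap (A_k - x_k) \in p$. An induction on $\lvert \alpha \rvert$ then shows that for every nonempty finite $\alpha \subseteq \N$ with least element $j$ one has $\sum_{n \in \alpha} x_n \in A_j$: the case $\lvert\alpha\rvert = 1$ is the choice $x_j \in A_j^\star \subseteq A_j$, and if $\alpha = \{j\} \cup \beta$ with $\min\beta = k > j$, then by the inductive hypothesis $\sum_{n\in\beta} x_n \in A_k \subseteq A_{j+1} \subseteq A_j - x_j$, whence $\sum_{n\in\alpha} x_n = x_j + \sum_{n\in\beta} x_n \in A_j$. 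In particular every element of $\fs(x_n)$ lies in $A_1 = C_i$.

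The main obstacle is not the recursion, which is elementary once the setup is in place, but the semigroup machinery that feeds it: verifying that $(\beta\N,+)$ is compact and right-topological (which needs care with the topology, especially the continuity of $p \mapsto p + q$ and the fact that left translations need not be continuous), and proving Ellis's lemma on the existence of idempotents (a Zorn's-lemma argument: pass to a minimal closed subsemigroup $T$, observe that for $s \in T$ the sets $T + s$ and then $\{ t \in T : t + s = s \}$ are closed subsemigroups of $T$, hence equal to $T$ by minimality, forcing $s + s = s$). With those two facts imported, the remainder of the argument is soft. One could instead invoke Hindman's original combinatorial proof, or Baumgartner's simplification, but the ultrafilter route is the shortest to write down.
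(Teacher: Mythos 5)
The paper does not prove this statement at all: it is quoted as background and attributed to Hindman's original 1974 paper, whose argument is a purely combinatorial (and notoriously intricate) induction. Your proposal is therefore necessarily a different route, namely the Galvin--Glazer proof via an idempotent ultrafilter in $(\beta\N,+)$, and it is correct and complete modulo the two facts you explicitly import: that $(\beta\N,+)$ is a compact right-topological semigroup under the stated extension of addition, and Ellis's idempotent lemma, whose Zorn/minimal-closed-subsemigroup sketch you give accurately. The recursion itself is sound: $A^\star\in p$ follows directly from $A\in p=p+p$, the choice $x_k\in A_k^\star$ guarantees $A_{k+1}=A_k\cap(A_k-x_k)\in p$, and your induction on $\lvert\alpha\rvert$ correctly uses the nesting $A_k\subseteq A_{j+1}\subseteq A_j-x_j$ to conclude $\fs(x_n)\subseteq C_i$; the observation that the idempotent is nonprincipal (so each $A_k^\star$ is infinite and an increasing choice of $x_k$ is possible) is also needed and correctly justified. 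Compared with the cited combinatorial proof, your approach trades elementary but lengthy bookkeeping for soft compactness arguments and generalizes immediately to finite products in arbitrary semigroups, which is in the spirit of the finite-products statements the paper actually pursues.
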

Given a partition result, it is natural to ask whether it has a density version.
For example, van der Waerden's theorem states that, for any finite partition of $\N$ there is a cell of the partition containing arbitrarily long arithmetic progressions.
The density version of van der Waerden's theorem, \szemeredi{}'s theorem, states that if $E \subset \N$ has \define{positive upper density}, meaning that
\begin{equation*}
\upperdens(E) = \limsup_{N \to \infty} \frac{|E \cap \{1,\dots, N\}|}{N}
\end{equation*}
is positive, then $E$ contains arbitrarily long arithmetic progressions.
If the above limit exists then its value is called the \define{density} of $E$ and denoted $\dens(E)$.

In an attempt to discover a density version of Hindman's result, \erdos{} asked whether $\dens(E) > 0$ implies that $E$ contains a shift of some finite sums set.
In other words, does every $E \subset \N$ satisfying $\dens(E) > 0$ contain a set of the form $\fs(x_n) + t$ for some sequence $x_n$ in $\N$ and some $t$ in $\N$? It is necessary to allow for a shift because having positive density is a shift-invariant property, whereas being a finite sums set is not.
Indeed $2\N + 1$ has positive density but contains no finite sums set.
The following theorem of E.~Straus provided ``a counterexample to all such attempts'' (\cite[p.~105]{MR593525}).
\begin{theorem}[E.~Straus, unpublished; see {\cite[Theorem 2.20]{MR2259058}} or {\cite[Theorem 11.6]{MR564927}}]
\label{strausExample}
For every $\epsilon > 0$ there is a set $E \subset \N$ having density $\dens(E) > 1 - \epsilon$ such that no shift of $E$ contains a finite sums set.
\end{theorem}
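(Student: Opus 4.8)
The plan is to exhibit $E$ explicitly (``Straus's example''); it has three parts — a reduction, a reformulation, and the construction itself, the last being where the work lies.

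\emph{Reduction to sparse finite sums sets.} If some shift $E - t$ contained a set $\fs(x_n)$, then, since $\fs(x_{n_k}) \subseteq \fs(x_n)$ for every subsequence and $x_n \to \infty$, I could pass to a subsequence to arrange $x_1 > t$ and $x_{n+1} > x_1 + \cdots + x_n =: s_n$ for all $n$. For such a super-increasing sequence $\fs(x_n)$ is sparse and self-similar: writing $B_k := \{0,x_1\} + \{0,x_2\} + \cdots + \{0,x_k\}$ for the Minkowski sum (a set of $2^k$ integers in $[0, s_k]$, with $B_0 = \{0\}$), one checks $\fs(x_n) \cap [x_m, x_{m+1}) = x_m + B_{m-1}$ and $\fs(x_n) \cap (s_m, x_{m+1}) = \emptyset$. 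Thus $\fs(x_n)$ is a disjoint union of ``clusters'' $x_m + B_{m-1}$, separated by huge multiplicative gaps, each cluster being itself a nested family of shorter clusters.

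\emph{Reformulation.} By the reduction, $E$ has no shift containing a finite sums set if and only if $\N \setminus E$ meets the set $t + \fs(x_n) = \{t + x_1\} \cup \bigcup_{m \ge 2}\bigl(x_m + t + B_{m-1}\bigr)$ for every $t \ge 0$ and every super-increasing $(x_n)$. The right-hand side is a disjoint union of translated lacunary cubes $x_m + t + B_{m-1}$ whose dimensions $m - 1$ grow while the scales $x_m$ at which they sit grow arbitrarily fast. So the task is: build $E$ with $\dens(E) > 1 - \epsilon$ such that, for every such ``tower'' of cubes, $\N \setminus E$ meets at least one of them.

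\emph{The construction and the main obstacle.} One cannot do this with a bound on the dimension that is uniform over $(x_n)$ — e.g.\ by making $E$ contain no high-dimensional lacunary cube — because, by \szemeredi{}'s theorem, a set of positive density contains arbitrarily long arithmetic progressions, and an arithmetic progression $a + \{0, d, \dots, (2^k-1)d\}$ is just a translate of the lacunary cube $\{0,d\} + \{0,2d\} + \cdots + \{0, 2^{k-1}d\}$; so $E$ must contain translated lacunary cubes of every dimension at every scale. The blocking therefore has to be recursive and scale-coherent. The scheme I would try to make precise: fix scales $N_1 \ll N_2 \ll \cdots$ growing faster than any iterated exponential, and on each block $[N_j, N_{j+1})$ place into $\N \setminus E$ a pattern of relative density below $\epsilon$ — designed so that any translated cube $B_k$ with $k$ comparable to $j$ that reaches the block also meets $\N \setminus E$, unless that cube is minuscule relative to $N_j$ — and tune the growth of the $N_j$ so that no super-increasing $(x_n)$ can keep the dimension of its cubes small relative to the scale at which the next cube must sit; then some cube of every tower is caught. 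Summing the (geometric) series of block densities keeps $\N \setminus E$ of density below $\epsilon$, and one arranges that $\dens(E)$ exists and exceeds $1 - \epsilon$. The hard part is exactly this design: one must defeat \emph{every} shift $t$ with the single set $E$, keep $\N \setminus E$ below relative density $\epsilon$ at \emph{every} scale (so that $\dens(E) > 1 - \epsilon$ genuinely holds), and rule out the opponent's freedom to make the scales astronomically larger than the cube dimension — all simultaneously.
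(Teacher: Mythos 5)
There is a genuine gap: your third part is not a proof but a description of a proof you would like to have. The reduction to super-increasing sequences and the ``lacunary cube'' picture of $\fs(x_n)$ are both correct, but the actual construction of $E$ is left at the level of ``the scheme I would try to make precise,'' and you yourself list the unresolved difficulties (defeating every shift $t$ simultaneously, controlling the relative density of $\N\setminus E$ at every scale, and handling towers whose scales $x_m$ dwarf the cube dimensions). Nothing in the proposal resolves them, and the scale-blocking approach as sketched seems genuinely hard to push through precisely because the adversary controls the gaps $x_{m+1}/x_m$.

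What is missing is the one arithmetic observation that makes the whole thing easy and that the paper's construction is built on: for \emph{every} modulus $a$, \emph{every} finite sums set $\fs(x_n)$ meets $a\N$ in an unbounded set. (By pigeonhole some residue class mod $a$ contains infinitely many of the $x_n$; summing $a$ of them, with indices as large as you like, gives an element of $\fs(x_n)$ divisible by $a$.) Consequently, if $\fs(x_n)+t\subseteq E$ then $E$ contains arbitrarily large elements of $a\N+t$ for every $a$. So one simply removes from $\N$, for each $t\ge 0$, the tail $\{a_t k + t : k\ge b_t\}$ of a single arithmetic progression: this blocks \emph{all} IP sets shifted by $t$ at once, with no case analysis on the structure or scales of $(x_n)$. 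The removed set for shift $t$ has upper density at most $1/a_t$, so choosing $a_t$ growing fast enough makes the total removed density less than $\epsilon$, and choosing the cutoffs $b_t$ appropriately (a monotone-convergence-for-density argument, as in Lemma~\ref{lem:union-cofinite} of the paper) makes $\dens(E)$ exist and exceed $1-\epsilon$. Your cube decomposition, while accurate, is a detour that replaces this one-line congruence trick with a combinatorial covering problem you do not solve.
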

This is proved by removing the tails of sparser and sparser infinite progressions from $\N$: for each $n$ in $\N$ one removes a set of the form $\{ a_n k + n \,:\, k \ge b_n \}$.
Since every finite sums set $\fs(x_n)$ intersects every set of the form $t \N$ (because infinitely many of the $x_n$ are congruent modulo $t$, say) after these removals one is left with a subset of $\N$ no shift of which contains a finite sums set.
Moreover, by carefully choosing the values of $a_n$ and $b_n$, one can ensure that the density of the resulting set exists and is as close to $1$ as we like.
Since any subset $F$ of $\N$ with $\upperdens(F) = 1$ contains arbitrarily long intervals, and hence a finite sums set, $\dens(E) > 1 - \epsilon$ is the best we can expect.

This paper is concerned with the natural task of finding and characterizing groups in which an analogue of Theorem \ref{strausExample} holds.
More specifically, we wish to describe those groups in which there are arbitrarily large sets no shift of which contains some sort of structured set.
To make this precise we need to decide what we mean by ``large'' and ``structured''.
We will consider only locally compact, Hausdorff topological groups, hereafter called simply ``locally compact groups''.
Within this class of groups, we can define a notion of largeness whenever the group has a \folner{} sequence.

\begin{definition}
Let $G$ be a locally compact group with a left Haar measure $\haar$.
A sequence $N \mapsto \Phi_N$ of compact, positive-measure subsets of $G$ is called a \define{left \folner{} sequence} if
\begin{equation}
\label{eq:def-left-folner}
\lim_{N \to \infty} \frac{\haar(\Phi_N \cap g\Phi_N)}{\haar(\Phi_N)} = 1
\end{equation}
uniformly on compact subsets of $G$, and a \define{right \folner{} sequence} if
\begin{equation*}
\lim_{N \to \infty} \frac{\haar(\Phi_Ng \cap \Phi_N)}{\haar(\Phi_N)} = 1
\end{equation*}
uniformly on compact subsets of $G$.
By a \define{two-sided \folner{} sequence} we mean a sequence that is both a left \folner{} sequence and a right \folner{} sequence.
\end{definition}

A locally compact group $G$ with a left Haar measure $\haar$ is said to be \define{amenable} if $\lp^\infty(G,\haar)$ has a left-invariant mean.
Every \lcsca{} group has a left \folner{} sequence \cite[Theorem 4.16]{MR961261}.
This was originally proved by \folner{} for countable groups in \cite{MR0079220}.


\begin{definition}
\label{def:density}
Let $\Phi$ be a left, right, or two-sided \folner{} sequence in a \lcsca{} group $G$ with a left Haar measure $\haar$ and let $E$ be a measurable subset of $G$.
Define the \define{upper density} of $E$ with respect to $\Phi$ to be
\begin{equation*}
\upperdens_\Phi(E) = \limsup_{N \to \infty} \frac{\haar(E \cap \Phi_N)}{\haar(\Phi_N)},
\end{equation*}
the \define{lower density} of $E$ with respect to $\Phi$ by
\begin{equation*}
\lowerdens_\Phi(E) = \liminf_{N \to \infty} \frac{\haar(E \cap \Phi_N)}{\haar(\Phi_N)},
\end{equation*}
and the \define{density} of $E$ with respect to $\Phi$ as $\dens_\Phi(E) = \upperdens_\Phi(E) = \lowerdens_\Phi(E)$ whenever the limit exists.
\end{definition}

Thus the question we wish to answer is the following one: in which \lcsca{} groups is it possible to find a subset with arbitrarily large density no shift of which contains a set of some specified type?
In this paper we concern ourselves with two types of sets: piecewise syndetic sets and sets of measurable recurrence.

\begin{definition}
Let $G$ be a topological group.
A subset $S$ of $G$ is called \define{syndetic} if there exists a compact set $K$ such that $KS=G$.
A subset $T$ of $G$ is called \define{thick} if for every compact set $K$ there exists $g\in G$ such that $Kg\subset T$.
A subset $P$ of $G$ is called \define{piecewise syndetic} if there exists a compact set $K$ such that $KP$ is thick.
\end{definition}

To be precise, we should speak of ``left thick'', ``left syndetic'' and ``left piecewise syndetic'' sets. However, since we will not need the corresponding right-sided notions, we will continue to omit ``left'' from the terminology.

\begin{definition}
\label{def:meas-action}
Let $(X,\mathscr{B},\mu)$ be a separable probability space.
By a \define{measure-preserving action} of a topological group $G$ on $(X,\mathscr{B},\mu)$ we mean a jointly measurable map $T : G \times X \to X$ such that the induced maps $T^g : X \to X$ preserve $\mu$ and satisfy $T^g T^h = T^{gh}$ for all $g,h$ in $G$.
\end{definition}

\begin{definition}
\label{def:SetRec}
A subset $R$ of a topological group $G$ is a set of \define{measurable recurrence} if for every compact set $K$ in $G$, every measure-preserving action $T$ of $G$ on a separable probability space $(X,\mathscr{B},\mu)$, and every non-null, measurable subset $A$ of $X$ there exists $g$ in $R \setminus K$ such that $\mu(A \cap T^g A) > 0$.
\end{definition}

For piecewise-syndetic sets we have a version of Theorem~\ref{strausExample} in every \lcsca{} group that is not compact.
The proof is given in Section~\ref{sec:Extras}.

\begin{theorem}
\label{thm:large-lower-non-PWS}
For any \lcsca{} group $G$ that is not compact, any left \folner{} sequence $\Phi$ in $G$, and any $\epsilon>0$ there is a closed subset $Q$ of $G$ with $\lowerdens_\Phi(Q) > 1 - \epsilon$ that is not piecewise syndetic.
\end{theorem}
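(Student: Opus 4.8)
The plan is to imitate Straus's construction, but in place of removing tails of arithmetic progressions we remove tails of a sequence of translates of a fixed compact ``chunk'' placed farther and farther out along the Følner sequence. The guiding observation is that a set $Q$ fails to be piecewise syndetic precisely when for every compact $K$ the set $KQ$ is \emph{not} thick, i.e.\ there is a compact $L=L(K)$ such that $KQ$ contains no left translate $Lg$. So our goal is to produce a closed $Q$ of lower density exceeding $1-\epsilon$ whose complement is ``thick enough'' to block, for each compact $K$, all translates of some compact $L(K)$ by $KQ$.

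First I would fix an increasing exhaustion $G=\bigcup_n K_n$ by compact symmetric neighbourhoods of the identity with $K_n\subset \mathrm{int}\,K_{n+1}$ (possible since $G$ is \lcsc). For the blocking mechanism, note that $KQ$ misses the translate $Lg$ iff $Q$ misses $K^{-1}Lg$; so to defeat the candidate ``syndeticity witness'' $K_n$ it suffices to find, for each $n$, a point $g_n$ such that $K_n^{-1}K_{n+1}g_n$ is entirely contained in $G\setminus Q$. Thus I would build $Q$ as $G$ minus a countable union of ``holes'' $H_n=K_n^{-1}K_{n+1}g_n$ (each a compact set), where the centers $g_n\to\infty$ are chosen sparse enough that the holes are pairwise disjoint and, crucially, that the Følner-averaged measure of $\bigcup_n H_n$ stays below $\epsilon$ in the \emph{lower} sense. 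Concretely: having chosen $g_1,\dots,g_{n-1}$, pick an index $N_n$ in the Følner sequence so large that $\haar(H_n)/\haar(\Phi_{N})<\epsilon 2^{-n}$ for all $N\ge N_n$, then use non-compactness to push $g_n$ far enough out that $H_n$ is disjoint from $H_1\cup\dots\cup H_{n-1}$ and from $\Phi_N$ for all $N< N_n$ — so each $\Phi_N$ meets only finitely many holes and the total intrusion of the holes into $\Phi_N$ is at most $\sum_{k\le n}\epsilon 2^{-k}<\epsilon$ for $N$ in the appropriate range, giving $\lowerdens_\Phi\big(\bigcup_n H_n\big)\le\epsilon$ and hence $\lowerdens_\Phi(Q)\ge 1-\epsilon$. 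Since each $H_n$ is compact and they ``escape to infinity,'' the union is closed, so $Q$ is closed.

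It remains to check that $Q$ is not piecewise syndetic. Given a compact $K$, choose $n$ with $K\subset K_n$. Then $K^{-1}(K_{n+1}g_n)\subset K_n^{-1}K_{n+1}g_n=H_n\subset G\setminus Q$, so $K_{n+1}g_n$ misses $KQ$; taking $L=K_{n+1}$ this shows $KQ$ contains no translate of $L$, i.e.\ $KQ$ is not thick. As $K$ was arbitrary, $Q$ is not piecewise syndetic.

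The main obstacle, and the only place needing care, is arranging the two competing demands on the placement of $g_n$: the holes $H_n$ must be large enough to block all translates of $K_{n+1}$ (their size is dictated by $K_n$, hence grows with $n$), yet their \emph{cumulative} density along $\Phi$ must stay under $\epsilon$. This is exactly the interplay Straus exploited — sparser and sparser deletions — and it works here for the same reason: for a fixed compact set $H$, $\haar(H)/\haar(\Phi_N)\to 0$ as $N\to\infty$ because $\haar(\Phi_N)\to\infty$ in a non-compact amenable group (a left Følner sequence in a non-compact group has measures tending to infinity, since otherwise a subsequence of the $\Phi_N$ would accumulate on a compact set forcing $G$ to be compact). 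So we may always postpone the $n$-th deletion to a scale where it costs less than $\epsilon 2^{-n}$, and then shove $g_n$ off to infinity to keep the holes disjoint and escaping; the bookkeeping is routine once this is set up. A minor technical point is to phrase ``escape to infinity'' correctly so that $\bigcup_n H_n$ is closed — it suffices that every compact set meets only finitely many $H_n$, which the disjointness-plus-$g_n\to\infty$ construction delivers.
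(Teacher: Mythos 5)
There is a genuine gap at the heart of your blocking mechanism. To show that $Q$ is not piecewise syndetic you must show, for each compact $K$, that $KQ$ is not thick, i.e.\ that there is a compact $L$ such that $Lg\not\subset KQ$ for \emph{every} $g\in G$ (equivalently, that $G\setminus KQ$ is syndetic, by Lemma~\ref{lem:synd-thick-dual}). Your construction places, for each $n$, a \emph{single} hole $H_n=K_n^{-1}K_{n+1}g_n$ in the complement of $Q$; your (correct) observation that $Q\cap K_n^{-1}K_{n+1}g_n=\emptyset$ forces $K_{n+1}g_n\not\subset K_nQ$ only rules out the one translate $K_{n+1}g_n$. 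Nothing prevents $K_{n+1}h\subset K_nQ$ for some other $h$, so $K_nQ$ may perfectly well still be thick, and the final paragraph of your argument (``this shows $KQ$ contains no translate of $L$'') does not follow. To genuinely defeat thickness, the level-$n$ holes must meet $K_n^{-1}K_{n+1}g$ for \emph{all} $g\in G$, i.e.\ the set of hole centers at level $n$ must be syndetic.

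That requirement is where the real difficulty of the theorem sits, because it puts the two demands you identified into much sharper conflict: one must now delete a whole syndetic family of translates of a growing compact set at each level, while keeping the total density of the deletions below $\epsilon$. The paper resolves this by first constructing, via a greedy/Zorn's-lemma separation argument (Lemma~\ref{lem:greedy}), discrete syndetic sets $S_1\supset S_2\supset\cdots$ whose $U$-thickenings have upper Banach density tending to $0$ (Lemma~\ref{lem:small-syndetic-lcsc}), then removing cocompact tails $g_iS_i'$ of translates of these (Lemma~\ref{lem:cofinite2} handles the density bookkeeping), and finally showing that $G\setminus KQ$ contains a cocompact subset of some $S_N$, hence is syndetic. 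Your density estimates and the closedness argument are fine for a sequence of compact holes escaping to infinity, but they do not survive the necessary upgrade to syndetically-spread holes; without that upgrade the set $Q$ you build need not fail to be piecewise syndetic.
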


Whether a version of Theorem~\ref{strausExample} holds for sets of measurable recurrence depends on how many finite-dimensional representations the group has.

\begin{definition}
By a \define{representation} of a topological group $G$ we mean a continuous homomorphism from $G$ to the unitary group of a complex Hilbert space.
A group is a \define{WM group} if it has no non-trivial finite-dimensional representations.
A group is a \define{virtually WM group} if it has a subgroup of finite index that is a WM group.
Lastly, a group is \define{WM-by-compact} if it has a closed, normal, WM subgroup that is cocompact.
\end{definition}

The terminology WM is from ergodic theory.
In \cite{MR2561208}, a topological group is said to be WM if any ergodic, measure preserving action is weakly mixing.
By Theorem~3.4 in \cite{MR735224}, this agrees with our terminology.
It follows immediately from von Neumann's work on almost-periodic functions (discussed in Section \ref{sec:Preliminaries}) that a group is a WM group if and only if it is minimally almost-periodic (which means that the only almost-periodic functions on the group are the constant functions).

One can measure how far from being WM a topological group $G$ is by considering the closed, normal subgroup $G_0$ of $G$, obtained by intersecting the kernels of all finite-dimensional representations: $G$ is WM if and only if $G_0 = G$.
We have a version of Theorem~\ref{strausExample} for sets of measurable recurrence whenever $G/G_0$ is not compact.
We give the proof in Section~\ref{sec:notvirtuallyWM}.

\begin{theorem}
\label{thm:straus-set}
Let $G$ be a \lcsca{} group such that $G/G_0$ is not compact.
Then for any left \folner{} sequence $\Phi$ in $G$ and any $\epsilon > 0$ there is a measurable subset $E$ of $G$ with $\dens_\Phi(E) > 1 - \epsilon$ such that no set of the form $KEK$ with $K\subset G$ compact contains a set of measurable recurrence.
\end{theorem}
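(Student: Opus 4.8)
The plan is to reproduce, at the level of finite-dimensional representations, the construction behind Theorem~\ref{strausExample}: to delete from $G$ the ``tails'' of the preimages, under suitable finite-dimensional representations, of small neighbourhoods of the identity. Write $H=G/G_0$ and $\pi\colon G\to H$ for the quotient map. Every finite-dimensional representation of $G$ factors through $H$, so these representations separate the points of $H$; thus $H$ is a Hausdorff, \lcsc{}, minimally almost periodic group which by hypothesis is \emph{not compact}, and it embeds, via a continuous injection $\iota$ with dense image, into its Bohr compactification $bH$, a compact group (Section~\ref{sec:Preliminaries}). Fix compacta $L_1\subset L_2\subset\cdots$ with $L_j\subset\operatorname{int}L_{j+1}$ and $\bigcup_jL_j=G$, so every compact subset of $G$ lies in some $L_j$. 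The elementary ingredient is: \emph{if $\rho\colon G\to\mathrm{U}(d)$ is a finite-dimensional representation with $\Gamma=\overline{\rho(G)}$, if $U\subset\Gamma$ is an open neighbourhood of the identity, if $C\subset G$ is compact, and if $F\subset G$ satisfies $\rho(F\setminus C)\cap U=\emptyset$, then no subset of $F$ is a set of measurable recurrence.} Indeed, let $G$ act on $(\Gamma,\haar_\Gamma)$ by $T^gx=\rho(g)x$ (a measure-preserving action of $G$ on a separable probability space in the sense of Definition~\ref{def:meas-action}) and pick a non-empty open $A\subset\Gamma$ with $AA^{-1}\subset U$; for $g\in F\setminus C$ we have $\rho(g)\notin AA^{-1}$, so $A\cap\rho(g)A=\emptyset$ and $\haar_\Gamma(A\cap T^gA)=0$, whence $C$, $T$ and $A$ witness that $F$ --- and hence any subset of $F$ --- fails Definition~\ref{def:SetRec}.

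The substantive point, where non-compactness of $G/G_0$ is used, is: \emph{for every compact $\Lambda\subset H$ and every $\delta>0$ there is a finite-dimensional representation $\sigma$ of $H$ with $\haar_{\overline{\sigma(H)}}(\sigma(\Lambda))<\delta$.} Since $\iota$ is injective and $H$ is not compact, $\iota(H)$ is a proper dense subgroup of $bH$: were $\haar_{bH}(\iota(H))>0$, Steinhaus's theorem would make $\iota(H)=\iota(H)\iota(H)^{-1}$ a neighbourhood of the identity, hence an open (so closed, and dense) subgroup, hence all of $bH$, forcing $\iota$ to be a continuous bijection of \lcsc{} groups and so, by the open mapping theorem, a homeomorphism identifying $H$ with the compact group $bH$ --- absurd. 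Hence $\haar_{bH}(\iota(\Lambda))=0$, so by outer regularity there is an open $O\supset\iota(\Lambda)$ with $\haar_{bH}(O)<\delta$. As $bH$ is a compact group it is the inverse limit of its compact Lie quotients $q_i\colon bH\to K_i$ (the images of its finite-dimensional representations), and the sets $q_i^{-1}(V)$ ($V\subset K_i$ open) form a base of the topology; by compactness of $\iota(\Lambda)$ and directedness we find one index $i$ and an open $V$ with $\iota(\Lambda)\subset q_i^{-1}(V)\subset O$, so $\haar_{K_i}(V)=\haar_{bH}(q_i^{-1}(V))\le\haar_{bH}(O)<\delta$. Composing $q_i\circ\iota$ with a unitary embedding of the Lie group $K_i$ gives the desired $\sigma$.

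For the construction itself, apply the previous paragraph for each $j$ with $\Lambda=\pi(L_j^{-1}L_j^{-1})$ and $\delta$ small to obtain $\rho_j=\sigma_j\circ\pi\colon G\to\mathrm{U}(d_j)$ with $\Gamma_j:=\overline{\rho_j(G)}$ a compact Lie group and $\haar_{\Gamma_j}\bigl(\rho_j(L_j)^{-1}\rho_j(L_j)^{-1}\bigr)<\epsilon\,2^{-j-2}$; as $\Gamma_j$ is metrizable, $\overline{\rho_j(L_j)^{-1}U\rho_j(L_j)^{-1}}$ decreases to $\rho_j(L_j)^{-1}\rho_j(L_j)^{-1}$ as $U$ shrinks, so there is an open neighbourhood $U_j$ of the identity in $\Gamma_j$ with $\haar_{\Gamma_j}\bigl(\overline{\rho_j(L_j)^{-1}U_j\rho_j(L_j)^{-1}}\bigr)<\epsilon\,2^{-j-1}$. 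Put $W_j=\rho_j^{-1}\bigl(\rho_j(L_j)^{-1}U_j\rho_j(L_j)^{-1}\bigr)$. Because $\Phi$ is a left \folner{} sequence, the pushforwards under $\rho_j$ of the normalised restrictions $\haar(\Phi_N)^{-1}\haar|_{\Phi_N}$ are asymptotically $\rho_j(G)$-invariant, hence converge weakly to $\haar_{\Gamma_j}$, which gives $\upperdens_\Phi(W_j)\le\haar_{\Gamma_j}\bigl(\overline{\rho_j(L_j)^{-1}U_j\rho_j(L_j)^{-1}}\bigr)<\epsilon\,2^{-j-1}$. Now set
\begin{equation*}
E=G\setminus\bigcup_{j\ge1}\bigl(W_j\setminus C_j\bigr),
\end{equation*}
where each $C_j\subset G$ is a compact set chosen large relative to $\Phi$ (taking $C_j\supset\bigcup_{N<N_{j+1}}\Phi_N$ for a fast-enough sequence $N_1<N_2<\cdots$ one gets $\haar\bigl((G\setminus E)\cap\Phi_N\bigr)<\tfrac{\epsilon}{2}\haar(\Phi_N)$ for all $N$; a routine further refinement, exactly as in the classical construction, makes $\dens_\Phi(E)$ exist, and it then exceeds $1-\epsilon$). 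Only compactness of the $C_j$ matters for the rest: $E\cap W_j\subset C_j$ by construction, and $l_1xl_2\in L_jEL_j$ with $\rho_j(l_1xl_2)\in U_j$ forces $\rho_j(x)\in\rho_j(L_j)^{-1}U_j\rho_j(L_j)^{-1}$, i.e.\ $x\in E\cap W_j\subset C_j$, so $(L_jEL_j)\cap\rho_j^{-1}(U_j)\subset L_jC_jL_j$, which is compact. The ingredient of the first paragraph (with $F=L_jEL_j$, $U=U_j$, $C=L_jC_jL_j$) then shows $L_jEL_j$ contains no set of measurable recurrence; since any compact $K$ lies in some $L_j$ and $KEK\subset L_jEL_j$, no $KEK$ does either.

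I expect the main difficulty to be the second paragraph --- producing, from the bare non-compactness of $G/G_0$, a finite-dimensional representation that is simultaneously ``large'' (so that preimages of small identity-neighbourhoods are sparse) and concentrated on a prescribed compact set. Routing this through the Bohr compactification (where a non-compact minimally almost periodic group sits as a Haar-null set) and then descending to a compact Lie quotient via the Peter--Weyl inverse-limit structure seems to be the efficient approach; the remaining points --- the weak convergence of the \folner{} averages and the bookkeeping that upgrades the density bound to an exact density --- are standard.
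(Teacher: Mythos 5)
Your construction is correct in outline and reaches the theorem by a route that is recognizably parallel to, but technically different from, the paper's. Where you work with finite-dimensional unitary representations --- descending from the Bohr compactification of $G/G_0$ to compact Lie quotients and deleting tails of the preimages $W_j=\rho_j^{-1}\bigl(\rho_j(L_j)^{-1}U_j\rho_j(L_j)^{-1}\bigr)$ --- the paper works with slowly varying almost periodic functions $f_n$ (Lemma~\ref{lem:slowly-varying-ap}) whose orbit closures are equicontinuous, minimal, uniquely ergodic systems (Theorem~\ref{uniqueErgodicitySegal}), and deletes tails of return-time sets to small balls around $f_n$; the non-recurrence of $KEK$ is then expressed by showing its complement lies in $\SetRec^{*}$ (Lemma~\ref{lem:top-ret-is-sec-ret}, Corollary~\ref{cor:cofin-pres-srs}) rather than by your explicit witness, the translation action on $\Gamma_j$. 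Both arguments rest on the same mechanism: non-compactness of $G/G_0$ forces compacta to have small image in the almost periodic compactification. Your elementary ingredient (no subset of $F$ is a set of measurable recurrence when $\rho(F\setminus C)$ misses a neighborhood of the identity in $\overline{\rho(G)}$) and the verification that $(L_jEL_j)\cap\rho_j^{-1}(U_j)\subset L_jC_jL_j$ are both correct. One slip: $G/G_0$ is \emph{maximally}, not minimally, almost periodic --- injectivity of $\iota$ is what your argument actually uses, and that part is right.

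The one real gap is the existence of $\dens_\Phi(E)$. The theorem asserts that the density exists and exceeds $1-\epsilon$, whereas your construction as written only yields $\lowerdens_\Phi(E)\geq 1-\epsilon$, and the ``routine further refinement'' you defer is precisely where the paper does nontrivial work. To close it along your lines you need three additions: (i) choose each $U_j$ so that $B_j:=\rho_j(L_j)^{-1}U_j\rho_j(L_j)^{-1}$ has $\haar_{\Gamma_j}$-null boundary (e.g.\ take $U_j$ a ball in a bi-invariant metric, avoiding the at most countably many radii whose boundary sphere is non-null), so that weak convergence of the pushed-forward \folner{} averages gives $\dens_\Phi(W_j)=\haar_{\Gamma_j}(B_j)$ and not merely an upper bound; (ii) check that the finite unions $W_1\cup\cdots\cup W_n$ also have a density, by pushing forward under $(\rho_1,\dots,\rho_n)$ onto the closed subgroup $\overline{(\rho_1\times\cdots\times\rho_n)(G)}$ of $\Gamma_1\times\cdots\times\Gamma_n$ and noting that the coordinate projections of its Haar measure are the $\haar_{\Gamma_i}$, so the boundary of the corresponding union of cylinders is still null --- this is the exact analogue of the paper's appeal to unique ergodicity of the product orbit closure; and (iii) replace your ad hoc choice of the compacta $C_j$ by an appeal to Lemma~\ref{lem:union-cofinite}, whose hypotheses are exactly the densities supplied by (i) and (ii). With these in place the proof is complete.
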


When $G = \mathbb{Z}$ the group $G_0$ is trivial ($G_0$ is trivial for every countable, abelian group) so Theorem~\ref{thm:straus-set} strengthens Theorem~\ref{strausExample} by exhibiting a set $E$ with $\dens_\Phi(E) > 1 - \epsilon$ for any prescribed \folner{} sequence $\Phi$.
Since every finite sums set is a set of measurable recurrence (see Example~\ref{eg:fpiMeasRec}) it also strengthens Theorem~\ref{strausExample} by widening the class of sets that cannot be shifted into $E$.

As in the proof of Theorem \ref{strausExample}, we will prove Theorem~\ref{thm:straus-set} by removing sparser and sparser sets from $G$.
To prohibit shifts of sets of measurable recurrence, we will construct $E$ by removing from $G$ shifts of tails of return time sets arising from certain $G$ actions.
That is, for certain measure-preserving actions $T : G \times X \to X$ we will remove from $G$ shifts of tails of sets with small density of the form $\{ g \in G : T^g x \in U \}$.
These actions will have a unique invariant measure, which will imply that the density of the sets we remove will exist.
Combining this with a version of the monotone convergence theorem for density (Lemma~\ref{lem:union-cofinite}) will allow us to prove that the density $\dens_\Phi(E)$ exists.
It is only when $G/G_0$ is not compact that we can produce sufficiently many actions suitable for this approach.

The two-sided finite product sets, which we now define, are what prevent Theorem~\ref{thm:straus-set} from holding in the WM case.
\begin{definition}
\label{def:finProdSets}
Let $\mathscr{F} = \{ \alpha \subset \N : 0 < |\alpha| < \infty \}$.
For any sequence $g_n$ in a topological group $G$ such that $g_{n}\to\infty$ (in the sense that it eventually leaves any compact subset of $G$) and any $\alpha = \{ k_1 < \cdots < k_m \}$ in $\mathscr{F}$ define $\inc_\alpha(g_n) = g_{k_1} \cdots g_{k_m}$ and $\dec_\alpha(g_n) = g_{k_m} \cdots g_{k_1}$.
Then $\fpi(g_n) := \{ \inc_\alpha(g_n) : \alpha \in \mathscr{F} \}$ is the \define{increasing finite products set} determined by the sequence $g_n$ and $\fpd(g_n) := \{ \dec_\alpha(g_n) : \alpha \in \mathscr{F} \}$ is the \define{decreasing finite products set} determined by the sequence $g_n$.
Put $\inc_\emptyset(g_n) = \dec_\emptyset(g_n) = \idG$.
Lastly, define by
\begin{equation*}
\fp(g_n) = \{ \inc_\alpha(g_n) \dec_\beta(g_n) : \alpha, \beta \in \mathscr{F} \cup \{ \emptyset \}, \alpha \cap \beta = \emptyset, \alpha \cup \beta \ne \emptyset \}
\end{equation*}
the \define{two-sided finite products set} determined by the sequence $g_n$.
\end{definition}

We insist that the sequence $g_n$ determining a finite products set escapes to infinity in order to produce sets of measurable recurrence, cf.~Example~\ref{eg:fpiMeasRec}.
Note that $\fp(g_n)$ contains both $\fpi(g_n)$ and $\fpd(g_n)$ because we can take either $\beta$ or $\alpha$ to be empty.

\begin{definition}
Let $G$ be a \lcsca{} group.
A subset $S$ of $G$ is called \define{left substantial} if $S \supset UW$ for some non-empty, open subset $U$ of $G$ containing $\idG$ and some measurable subset $W$ of $G$ having positive upper density with respect to some left \folner{} sequence in $G$.
\end{definition}

By \cite[Theorem~2.4]{MR2561208}, a \lcsca{} group $G$ is WM if and only if every left substantial subset of $G$ contains an increasing finite products set.
It follows that Theorem~\ref{thm:straus-set} fails badly when $G$ is WM.

\begin{definition}
\label{def:substantial:unimod}
Let $G$ be a \lcsca{}, unimodular group.
We say that a subset $S$ of $G$ is \define{substantial} if $S \supset UWU$ for some non-empty, open subset $U$ of $G$ containing $\idG$ and some measurable subset $W$ of $G$ having positive upper density with respect to some two-sided \folner{} sequence in $G$.
\end{definition}

Our next result, a two-sided version of \cite[Theorem~2.4]{MR2561208}, strengthens the degree to which a version of Theorem~\ref{strausExample} is unavailable in WM groups.
(Note that WM groups are always unimodular by Lemma~\ref{lem:WMunimod}.)

\begin{theorem}
\label{mainTheorem:WM:lcsc}
Let $G$ be a \lcsca{}, WM group.
Then every substantial subset of $G$ contains a two-sided finite products set.
\end{theorem}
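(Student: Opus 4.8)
The plan is to mimic, in a two-sided fashion, the inductive construction behind \cite[Theorem~2.4]{MR2561208}, the new ingredient being a two-sided Furstenberg correspondence principle that manufactures a $G\times G$-action controlling both left and right translates of a positive-density set at once.

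Fix an open neighbourhood $U$ of $\idG$ and a measurable $W$ with $\upperdens_\Phi(W)>0$ for some two-sided \folner{} sequence $\Phi$, so that $UWU\subseteq S$; since $\idG\in U$ we have $\upperdens_\Phi(UWU)\ge\upperdens_\Phi(W)>0$. I would build a sequence $g_1,g_2,\dots$ in $G$ with $g_n\to\infty$, together with a decreasing family of measurable sets $W_0\supseteq W_1\supseteq\cdots$, each of positive upper density with respect to $\Phi$, governed by
\[
W_n=\bigcap_{\substack{\gamma,\delta\subseteq\{1,\dots,n\}\\ \gamma\cap\delta=\emptyset}}\inc_\gamma(g_\bullet)^{-1}\,(UWU)\,\dec_\delta(g_\bullet)^{-1},
\qquad W_0=UWU .
\]
The definition is arranged so that choosing $g_{n+1}\in W_n$ forces every new product $\inc_\gamma(g_\bullet)\,g_{n+1}\,\dec_\delta(g_\bullet)$ (with $\gamma,\delta\subseteq\{1,\dots,n\}$ disjoint) into $UWU$; since in any $\inc_\alpha(g_n)\dec_\beta(g_n)$ the largest index of $\alpha\cup\beta$ sits at the end of the increasing block or at the start of the decreasing block, this is exactly what yields $\fp(g_n)\subseteq UWU\subseteq S$. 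Splitting according to whether the new index $n+1$ falls into the increasing or the decreasing block, one checks the recursion $W_{n+1}=W_n\cap g_{n+1}^{-1}W_n\cap W_n g_{n+1}^{-1}$.

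Everything thus reduces to one inductive step: given $V:=W_n$ with $\upperdens_\Phi(V)=:\delta>0$, produce $g\in V$, lying outside any prescribed compact set, with $\upperdens_\Phi\bigl(V\cap g^{-1}V\cap Vg^{-1}\bigr)>0$. Here I would apply the two-sided Furstenberg correspondence: it attaches to $V$ a measure-preserving action $\sigma$ of $G\times G$ on a separable probability space $(X,\mathscr B,\mu)$ and a set $A\in\mathscr B$ with $\mu(A)=\delta$ such that, for suitable conventions, $\sigma^{(g,\idG)}$ and $\sigma^{(\idG,g)}$ implement left- and right-translation of $V$ by $g$, and for every $g$
\[
\mu\bigl(A\cap\sigma^{(g,\idG)}A\cap\sigma^{(\idG,g)}A\bigr)\le\upperdens_\Phi\bigl(V\cap g^{-1}V\cap Vg^{-1}\bigr).
\]
Now $G$ is WM, hence so is $G\times G$ (a finite-dimensional representation of $G\times G$ is trivial on $G\times\{\idG\}$ and on $\{\idG\}\times G$, so trivial). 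Therefore every ergodic measure-preserving $G\times G$-action is weakly mixing, and passing to the ergodic decomposition of $\mu$ the triple correlations $g\mapsto\mu(A\cap\sigma^{(g,\idG)}A\cap\sigma^{(\idG,g)}A)$ — a correlation along the linearly independent families $g\mapsto\idG_{G\times G}$, $g\mapsto(g,\idG)$, $g\mapsto(\idG,g)$ — converge \emph{in density} to $\int\condex{\ind A}{\mathscr I}^3\intd\mu\ge\mu(A)^3=\delta^3$, where $\mathscr I$ is the $\sigma$-algebra of invariant sets. In particular $\{g:\mu(A\cap\sigma^{(g,\idG)}A\cap\sigma^{(\idG,g)}A)>\delta^3/2\}$ has density $1$ with respect to $\Phi$. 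Intersecting this density-one set with $V$ (upper density $\delta$) and using that a WM group is noncompact, so sets of positive upper density are unbounded, I may choose $g=g_{n+1}$ in the intersection and outside the $n$-th member of a compact exhaustion of $G$. Then $g_{n+1}\in W_n$, $g_{n+1}\to\infty$ as required, and $\upperdens_\Phi(W_{n+1})\ge\mu(A\cap\sigma^{(g_{n+1},\idG)}A\cap\sigma^{(\idG,g_{n+1})}A)>\delta^3/2>0$, closing the induction.

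The main obstacle, I expect, is the two-sided correspondence principle: one must produce a \emph{single} $G\times G$-system whose two coordinate directions simultaneously track the left and right translates of $V$ while keeping the density inequality in the displayed direction, and this is precisely where the two-sided \folner{} hypothesis enters, together with some care about left/right Haar bookkeeping and the unimodularity of WM groups (Lemma~\ref{lem:WMunimod}). A secondary point requiring attention is upgrading the convergence of the triple correlations from a statement about Ces\`aro averages to the ``in density'' form used above; this is the weakly mixing instance of multiple recurrence for amenable group actions, but it must be invoked in the sharper density form rather than merely as convergence of averages.
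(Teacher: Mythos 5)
Your overall architecture matches the paper's: a two-sided correspondence principle feeding an inductive construction with the recursion $W_{n+1}=W_n\cap g_{n+1}^{-1}W_n\cap W_ng_{n+1}^{-1}$, and the combinatorial bookkeeping showing $\fp(g_n)\subseteq UWU$ is correct. The gap is in the ergodic-theoretic heart of the argument. You claim that because $G\times G$ is WM, every ergodic $G\times G$-system is weakly mixing and hence the triple correlations $g\mapsto\mu(A\cap\sigma^{(g,\idG)}A\cap\sigma^{(\idG,g)}A)$ converge in density to $\int\condex{\ind{A}}{\mathscr I}^3\intd\mu$, where $\mathscr I$ is the invariant $\sigma$-algebra of the full $G\times G$-action. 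This is false. The expression $\mu(A\cap L^gA\cap R^gA)$ is a multiple-recurrence average for \emph{two commuting $G$-actions}, and its characteristic factors are not the $G\times G$-invariant factor: even when the $G\times G$-action is weakly mixing, the individual sub-actions $L$, $R$ and $LR$ can each have large invariant factors. A product system $X=Y\times Z$ with $(g,h)\cdot(y,z)=(gy,hz)$ and both coordinate actions weakly mixing is ergodic and weakly mixing as a $G\times G$-system, yet for $A=A_Y\times A_Z$ the correlation $\mu(A\cap L^gA\cap R^gA)=\mu_Y(A_Y\cap gA_Y)\mu_Z(A_Z\cap gA_Z)$ converges in density to $\mu(A)^2$, not $\mu(A)^3$. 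So your limit formula cannot be derived from weak mixing of the ambient action; positivity of the limit is exactly the nontrivial content here.

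What the paper actually does at this point is shift the pattern to $\xi\cdot T_1^g\xi\cdot T_1^gT_2^g\xi$ with $T_1=R$, $T_2=L$, pass to a \emph{magic extension} of the pair $(T_1,T_2)$ (Corollary 4.6 of \cite{arxiv:1309.6095}), and prove (Theorem~\ref{thm:wmDlim}) that the $\udlim$ equals
$\int\condex{f_0}{\inv_1\vee\inv_{12}}\condex{f_1}{\inv_1\vee\inv_2}\condex{f_2}{\inv_2\vee\inv_{12}}\intd\mu$,
where the conditioning is on joins of the invariant algebras of $T_1$, $T_2$, $T_1T_2$; the WM hypothesis enters only to kill the almost periodic part of this limit function and to identify $\kron$ with $\inv$ for the various sub-actions. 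The resulting lower bound is $\mu(A)^4$ (via \cite[Lemma 1.6]{MR2794947}), not $\mu(A)^3$ --- a symptom of the different characteristic factors. Two further points you gloss over: (i) in the non-discrete case the correspondence principle produces a continuous function $\xi=\psi\conv\ind{W}\conv\check\psi$ dominated by $\ind{S}$ rather than a set $A$, which is precisely why substantiality (the $U\cdot U$ padding) is needed; and (ii) to obtain an \emph{ergodic} $G\times G$-system one must replace the given Reiter sequence by a new one at each stage of the induction (Part~\eqref{thm:lcscfcp:nu} of Theorem~\ref{thm:lcscfcp}), which is why the paper carries a sequence $\Psi_i$ of Reiter sequences rather than working with a single fixed $\Phi$ throughout.
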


As in \cite{MR2561208}, we deduce Theorem~\ref{mainTheorem:WM:lcsc} from results about measurable recurrence of WM groups using a version of the Furstenberg correspondence principle.
Since two-sided finite products sets involve multiplication on the left and on the right, we will need a two-sided version of the correspondence principle.
This is developed in Section~\ref{sec:fc}.
Moreover, since we deal with left and right multiplication, we will need to handle multiple recurrence for these group actions.
Results from \cite{arxiv:1309.6095}, which are based on ideas introduced in \cite{MR2539560} and \cite{MR2599882}, will allow us to do this.

In both \cite[Theorem~2.4]{MR2561208} and Theorem~\ref{mainTheorem:WM:lcsc}, attention is restricted to substantial subsets because, as explained in Section~\ref{sec:fc} below, there is in general no correspondence principle for arbitrary positive-density subsets.

We will show (see Example~\ref{eg:twoSidedNecessary}) that it is not sufficient to assume the set in Theorem~\ref{mainTheorem:WM:lcsc} is left substantial.
In fact we will construct, in a countable group, a set that has density $1$ with respect to a left \folner{} sequence and yet cannot contain a decreasing finite products set.
By \cite[Theorem~2.4]{MR2561208} this set must have zero density with respect to every right \folner{} sequence.
The existence of such a set answers the question, raised in \cite{arxiv:1307.0767}, of whether a set having positive upper density with respect to some left \folner{} sequence must have positive upper density for some right \folner{} sequence.

Our next result, a consequence of Theorem~\ref{mainTheorem:WM:lcsc}, shows that a version of Theorem~\ref{strausExample} fails to hold whenever $G$ is WM-by-compact.

\begin{theorem}
\label{mainTheorem:virtuallyWM:lcsc}
Let $G$ be a \lcsca{} group that is WM-by-compact but not WM.
Then every substantial subset of $G$ contains a left-shift of some two-sided finite products set and a right-shift of some (possibly different) two-sided finite products set.
On the other hand, for every two-sided \folner{} sequence $\Phi$ in $G$ there exists a subset of $G$ having positive upper density with respect to $\Phi$ containing no set of measurable recurrence.
\end{theorem}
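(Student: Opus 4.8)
The plan is to prove the two assertions separately: the first by reducing, over a single well‑chosen coset, to Theorem~\ref{mainTheorem:WM:lcsc} applied to the cocompact WM subgroup, and the second by transporting a positive‑measure non‑recurrent set back from a compact quotient. We assume throughout that $G$ is not compact, so that two‑sided finite products sets exist.

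For the second assertion, note that since $G$ is not WM it carries a non‑trivial finite‑dimensional representation $\rho\colon G\to U(n)$; let $K:=\overline{\rho(G)}$, the non‑trivial compact (hence Lie) group it generates, and let $q\colon G\to K$ be the induced homomorphism, which has dense image. Fix a bi‑invariant metric on $K$, let $A$ be the open ball of radius $r$ about $\idG$ with $2r$ smaller than the diameter of $K$, and put $B:=K\setminus\overline{AA^{-1}}$; this is a non‑empty open set of positive Haar measure with $A\cap kA=\emptyset$ for every $k\in B$. I claim $E:=q^{-1}(B)$ works for every two‑sided \folner{} sequence $\Phi$ at once. As $\Phi$ is in particular left \folner{}, the pushforwards under $q$ of the normalized restrictions $\nu_{N}$ of Haar measure to $\Phi_{N}$ converge weak‑$*$ to the Haar measure of $K$ — every weak‑$*$ limit is a mean on $\cont(K)$ invariant under left translation by $q(G)$, hence by all of $K$ — and since $B$ is open this gives $\upperdens_{\Phi}(E)\ge\haar_{K}(B)>0$. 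To see that no subset of $E$ is a set of measurable recurrence, let $G$ act on $(K,\haar_{K})$ by $T^{g}x=q(g)x$, a continuous measure‑preserving action on a separable probability space; then for this action, the non‑null set $A$, and the empty compact set, every $g\in E$ satisfies $q(g)\in B$, so $T^{g}A=q(g)A$ is disjoint from $A$ and $\haar_{K}(A\cap T^{g}A)=0$.

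For the first assertion, fix a closed, normal, WM, cocompact subgroup $H$ of $G$. Then $H$ is again \lcsca{}, both $H$ and $G$ are unimodular ($H$ by Lemma~\ref{lem:WMunimod}; $\modular_{G}$ is trivial on $H$ as $H$ is WM, hence factors through the compact group $G/H$, so is trivial), and $H$ is non‑compact. Let $\pi\colon G\to G/H$ be the quotient, $\bar{\haar}$ the Haar probability on $G/H$, and fix a Borel section $\sigma\colon G/H\to G$ with relatively compact image, so that $(h,\xi)\mapsto\sigma(\xi)h$ pushes $\haar_{H}\otimes\bar{\haar}$ forward to $\haar_{G}$ (Weil's formula) and intertwines right translation by $H$ on $G$ with right translation by $H$ on the first factor. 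Given a substantial set $S\supseteq UWU$, with $U$ a symmetric open neighbourhood of $\idG$ and $\upperdens_{\Psi}(W)=\delta>0$ for a two‑sided \folner{} sequence $\Psi$ in $G$, write $A_{N}^{\xi}:=\{h\in H:\sigma(\xi)h\in\Psi_{N}\}$ and $\rho_{N}(\xi):=\haar_{H}(A_{N}^{\xi})/\haar_{G}(\Psi_{N})$. Because $\Psi$ is right \folner{}, the density $\rho_{N}$ of $\pi_{*}\nu_{N}$ with respect to $\bar{\haar}$ is asymptotically $(G/H)$‑translation invariant in $L^{1}(\bar{\haar})$, whence $\rho_{N}\to 1$ there and $\pi_{*}\nu_{N}\to\bar{\haar}$ in total variation — the point that lets one pigeonhole against the fixed measure $\bar{\haar}$. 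Since $\haar_{G}(W\cap\Psi_{N})/\haar_{G}(\Psi_{N})$ is the $\pi_{*}\nu_{N}$‑average of the fibrewise densities $f_{N}(\xi):=\haar_{H}(\{h:\sigma(\xi)h\in W\}\cap A_{N}^{\xi})/\haar_{H}(A_{N}^{\xi})$, total‑variation convergence forces $\int f_{N}\intd\bar{\haar}\ge\delta/2$ along a subsequence, while the $\bar{\haar}$‑averages of the two‑sided (conjugated) \folner{} defects of the $A_{N}^{\xi}$ under each fixed element of $H$ tend to $0$. A diagonal argument against $\bar{\haar}$ together with reverse Fatou then pins down a coset $\xi_{0}\in G/H$, with representative $g_{0}$, and a subsequence along which $A_{N}^{\xi_{0}}$ is a two‑sided \folner{} sequence in $H$ and $W^{(g_{0})}:=\{h\in H:g_{0}h\in W\}$ keeps positive upper density with respect to it. Since $UWU$ is open and contains $u_{1}^{-1}Wu_{2}^{-1}$ for all $u_{1},u_{2}\in U$, a sufficiently small symmetric open neighbourhood $U'$ of $\idG$ in $H$ (chosen to absorb conjugation by $g_{0}$, using normality of $H$) yields $W'':=\{h\in H:g_{0}U'hU'\subseteq UWU\}\supseteq W^{(g_{0})}$, so $U'W''U'$ is substantial in $H$ and $g_{0}(U'W''U')\subseteq UWU\subseteq S$. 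Theorem~\ref{mainTheorem:WM:lcsc} applied to $H$ now furnishes $h_{n}\to\infty$ in $H$ — hence in $G$ — with $\fp(h_{n})\subseteq U'W''U'$, and $g_{0}\fp(h_{n})\subseteq S$ is the required left‑shift of a two‑sided finite products set. The right‑shift follows by the mirror‑image argument, using $\Psi$ as a right \folner{} sequence and a right coset $Hg_{1}$.

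The heart of the argument — and the step I expect to be the main obstacle — is this reduction to $H$: pinning down a single good coset and certifying that the corresponding slice of $\Psi$ is genuinely a two‑sided \folner{} sequence in $H$, in particular upgrading approximate invariance of the $A_{N}^{\xi_{0}}$ along a countable dense subset to invariance uniform on compacta. When $H$ has finite index in $G$ this is just an averaging over finitely many cosets and the normality of $H$; in general it requires the disintegration above (the total‑variation convergence $\pi_{*}\nu_{N}\to\bar{\haar}$ being precisely what allows one to pigeonhole against a fixed measure) and is completed, after passing to a further subsequence and applying the usual modification of an asymptotically invariant sequence, exactly as the analogous step in the one‑sided result \cite{MR2561208}. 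The remaining ingredients — the Weil‑formula disintegration, the existence of two‑sided \folner{} sequences in $H$, and the thickening by $U$ — are routine.
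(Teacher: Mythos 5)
Your proof is correct and follows essentially the same route as the paper: the first assertion is reduced to Theorem~\ref{mainTheorem:WM:lcsc} applied to the cocompact WM subgroup $H$ by restricting the two-sided \folner{} sequence to a well-chosen coset (your inline disintegration/pigeonhole argument is precisely the content of the paper's Lemma~\ref{lem:folner-seq-restriction-to-subgroup}), and the second assertion is witnessed by a translation action on a non-trivial compact group into which $G$ maps densely. The only cosmetic difference is that for the second assertion the paper uses the left-translation action of $G$ on $G/H$ itself rather than on the closure of the image of a finite-dimensional representation.
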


In view of the fact (see Section~\ref{subsec:apFunc} below) that a countable group $G$ is virtually WM if and only if $G_0$ has finite index, Theorems~\ref{thm:straus-set}, \ref{mainTheorem:WM:lcsc} and \ref{mainTheorem:virtuallyWM:lcsc} together yield the following trichotomy for countable, amenable groups.

\begin{theorem}
\label{thm:countableClassification}
Let $G$ be a countable, infinite, amenable group.
Then exactly one of the following holds.
\begin{enumerate}
\item $G$ is WM.
Then every subset of $G$ having positive upper density with respect to some two-sided \folner{} sequences contains a two-sided finite products set.
\item $G$ is virtually WM, but not WM.
Then every subset of $G$ having positive upper density with respect to some two-sided \folner{} sequences contains a left shift of some two-sided finite products set and a right shift of some two-sided finite products set.
However, for any two-sided \folner{} sequence there is a subset with positive upper density that does not contain a set of measurable recurrence.
\item $G$ is not virtually WM.
Then for any two-sided \folner{} sequence $\Phi$ and any $\epsilon > 0$ there is a subset $E$ of $G$ with $\dens_\Phi(E) > 1 - \epsilon$ such that no set of the form $KEK$, where $K \subset G$ is finite, contains a set of measurable recurrence.
\end{enumerate}
\end{theorem}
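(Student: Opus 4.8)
The plan is to derive the trichotomy by assembling Theorems~\ref{thm:straus-set}, \ref{mainTheorem:WM:lcsc} and \ref{mainTheorem:virtuallyWM:lcsc}, after translating their hypotheses and conclusions into the language of countable discrete groups. Throughout, $G$ is countable, infinite and amenable, hence \lcsca{} and unimodular, with counting measure as a bi-invariant Haar measure; every subset of $G$ is measurable and every compact subset of $G$ is finite. The three cases --- (1) $G$ is WM, (2) $G$ is virtually WM but not WM, (3) $G$ is not virtually WM --- are plainly mutually exclusive and exhaustive, so it suffices to prove that each clause holds in the corresponding case.

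First I would record two translations. Since $\{\idG\}$ is an open neighbourhood of the identity in a discrete group, the set $UWU$ in Definition~\ref{def:substantial:unimod} may be taken equal to $W$; hence a subset of $G$ is substantial precisely when it has positive upper density with respect to some two-sided \folner{} sequence. Second, I claim that for countable $G$ the properties ``WM-by-compact'' and ``virtually WM'' coincide. Indeed, a cocompact subgroup of a discrete group has finite index, so a WM-by-compact group is virtually WM. Conversely, by the fact recalled in Section~\ref{subsec:apFunc}, if $G$ is virtually WM then $G_0$ has finite index; and $G_0$, being closed and normal, is itself a WM group: any finite-dimensional representation $\pi$ of $G_0$ is a subrepresentation of the restriction to $G_0$ of the finite-dimensional representation $\mathrm{Ind}_{G_0}^{G}\pi$ of $G$ (by Frobenius reciprocity), and the kernel of the latter contains $G_0$ by the very definition of $G_0$, forcing $\pi$ to be trivial. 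Thus $G_0$ witnesses that $G$ is WM-by-compact. The same cited fact shows that $G$ is not virtually WM if and only if $G/G_0$ is infinite, that is, not compact.

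With these in hand the three cases are immediate. In case (1), Theorem~\ref{mainTheorem:WM:lcsc} together with the first translation gives that every subset of $G$ of positive upper density with respect to some two-sided \folner{} sequence contains a two-sided finite products set. In case (2), the second translation says $G$ is WM-by-compact but not WM, so Theorem~\ref{mainTheorem:virtuallyWM:lcsc} yields --- again invoking the first translation for the positive part --- both that every such set contains a left shift and a right shift of (possibly different) two-sided finite products sets, and that for every two-sided \folner{} sequence there is a subset of positive upper density containing no set of measurable recurrence. In case (3), the second translation gives that $G/G_0$ is not compact, so Theorem~\ref{thm:straus-set} applies: given a two-sided \folner{} sequence $\Phi$ (which is in particular a left \folner{} sequence) and $\epsilon>0$, it produces $E\subset G$ with $\dens_\Phi(E)>1-\epsilon$ such that no set $KEK$ with $K\subset G$ finite contains a set of measurable recurrence. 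This is exactly clause (3).

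The content here is entirely a matter of matching hypotheses to the three main theorems, so I do not expect a serious obstacle; the only point requiring a genuine, if short, argument is the equivalence of ``WM-by-compact'' and ``virtually WM'' for countable groups, and within it the verification that the von Neumann kernel $G_0$ is a WM group whenever it has finite index --- the induced-representation step sketched above.
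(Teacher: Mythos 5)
Your proposal is correct and follows essentially the same route as the paper: the authors also obtain this trichotomy by combining Theorems~\ref{thm:straus-set}, \ref{mainTheorem:WM:lcsc} and \ref{mainTheorem:virtuallyWM:lcsc} with the observation that for countable groups ``substantial'' reduces to positive upper density along a two-sided \folner{} sequence and that ``virtually WM'' coincides with ``WM-by-compact'' (via $G_0$ having finite index and being itself WM). Your induced-representation argument for the WM-ness of $G_0$ is exactly the one indicated in the classification at the end of Section~\ref{subsec:apFunc}.
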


In general, however, the situation is more complicated.
As Example~\ref{ex:klaus-schmidt} shows, there are \lcsca{} groups $G$ which have cocompact von Neumann kernel and yet fail to be WM-by-compact.
Nevertheless, for such groups we still have a one-sided version of Theorem~\ref{mainTheorem:virtuallyWM:lcsc}.
It would be interesting to know whether a two-sided version holds.

\begin{theorem}
\label{thm:hindman-G0-cocompact}
Let $G$ be a non-compact, \lcsca{} group such that $G/G_{0}$ is compact.
Let $S$ be a substantial subset of $G$.
Then $S$ contains a (left or right) shift of an increasing finite products set.
\end{theorem}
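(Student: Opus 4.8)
The plan is to follow the proofs of Theorems~\ref{mainTheorem:WM:lcsc} and \ref{mainTheorem:virtuallyWM:lcsc}: pass, via the Furstenberg correspondence principle of Section~\ref{sec:fc}, from a statement about $S$ to a recurrence statement about measure-preserving actions of $G$, and then establish that recurrence statement using the hypothesis on $G/G_{0}$ to control the ``compact part'' of an arbitrary such action. For the first step, note that since $W$ has positive upper density with respect to a two-sided (hence a left) \folner{} sequence and $\idG \in U$, the inclusions $UW \subseteq S$ and $WU \subseteq S$ exhibit $S$ and its mirror image as left substantial. Feeding these into the one-sided part of the correspondence machinery used for Theorem~\ref{mainTheorem:WM:lcsc}, one obtains a measure-preserving action $T \colon G \times X \to X$ on a separable probability space $(X, \mathscr{B}, \mu)$ and a non-null $A \subseteq X$ such that, whenever $g_{n} \to \infty$ in $G$ and $t \in G$ satisfy $t \cdot \fpi(g_{n}) \subseteq \{ g : \mu(A \cap T^{g}A) > 0 \}$ (respectively $\fpi(g_{n}) \cdot t \subseteq \{ g : \mu(A \cap T^{g}A) > 0 \}$), then $S$ contains a left (respectively right) shift of $\fpi(g_{n}')$ for some sequence $g_{n}'$, built from $g_{n}$, with $g_{n}' \to \infty$. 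So it is enough to prove: for every measure-preserving action of $G$ on a separable probability space and every non-null $A$, there exist $t \in G$ and $g_{n} \to \infty$ in $G$ with $\mu\bigl(A \cap T^{t\cdot\inc_{\alpha}(g_{n})}A\bigr) > 0$ for all $\alpha \in \mathscr{F}$, or the same with $\inc_{\alpha}(g_{n})\cdot t$ in place of $t\cdot\inc_{\alpha}(g_{n})$.

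The point where the hypothesis enters is the maximal isometric factor $Z$ of the system $(X, \mathscr{B}, \mu, T)$. The group $G$ acts on $Z$ through a continuous homomorphism $\phi \colon G \to K$ into a compact group $K$, because $Z$ is generated by the functions lying in finite-dimensional $G$-invariant subspaces of $\lp^{2}(X,\mu)$; each such subspace is a finite-dimensional representation of $G$, so $G_{0}$ — the intersection of the kernels of all finite-dimensional representations of $G$ — acts trivially on it, hence trivially on $Z$, and $\phi$ factors through $G/G_{0}$. Since $G/G_{0}$ is compact, $K = \phi(G)$ is a compact group. Now, since $G$ is $\sigma$-compact and non-compact, the set $G^{*} = \bigcap_{L}\overline{G\setminus L}$, intersection over the compact subsets $L$ of $G$ with closures taken in the Stone--\v{C}ech compactification $\beta G$, is a non-empty closed subsemigroup of $\beta G$, so by Ellis's theorem it contains an idempotent $p$. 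The continuous extension $\widetilde{\phi} \colon \beta G \to K$ is a homomorphism of semigroups, so $\widetilde{\phi}(p)$ is an idempotent in the \emph{group} $K$, hence the identity. Therefore $\phi^{-1}(V) \cap (G\setminus L) \in p$ for every identity neighbourhood $V$ in $K$ and every compact $L$, and the Galvin--Glazer theorem yields, for each such $V$, a sequence $g_{n} \to \infty$ with $\fpi(g_{n}) \subseteq \phi^{-1}(V)$. In other words, we can choose $g_{n} \to \infty$ so that every product $\inc_{\alpha}(g_{n})$, $\alpha \in \mathscr{F}$, acts on $Z$ as close to the identity as we wish.

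The remaining, and hardest, task is to lift this near-triviality on $Z$ to recurrence in $X$. Here I would use the IP multiple recurrence results of \cite{arxiv:1309.6095}, which rest on the methods of \cite{MR2539560} and \cite{MR2599882}: one climbs the Furstenberg distal tower of the system (relative to its invariant $\sigma$-algebra), at each level refining the sequence $g_{n}$ by a further Hindman selection so that the fibre cocycle evaluated along the finite products $\inc_{\alpha}(g_{n})$ clusters near a single constant lying in the (large) set of values that force recurrence on that level, while the relatively weakly mixing directions are annihilated in the corresponding IP-limit. On the Kronecker level the idempotent argument of the previous paragraph pins this constant to the identity, but the higher levels of the tower — which are present exactly when $G$ is not WM — in general only allow clustering near a nontrivial constant; the accumulated constants produce the element $t$, and this is why the conclusion is a \emph{shift} and is only one-sided. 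Carrying the refinement consistently through all levels gives $g_{n} \to \infty$ and $t \in G$ with $t\cdot\fpi(g_{n})$ (or, running the argument on the other side, $\fpi(g_{n})\cdot t$) inside $\{ g : \mu(A \cap T^{g}A) > 0 \}$, which by the first paragraph completes the proof. The main obstacle is precisely this compatibility: arranging a \emph{single} sequence and a \emph{single} shift to work at every level of the distal tower at once, using only the structural input supplied by the hypothesis, namely the compactness of $K$.
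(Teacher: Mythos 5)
Your overall strategy (correspondence principle plus control of the almost periodic part via compactness of $G/G_{0}$) points in the right direction, but there are two genuine gaps. First, the reduction in your opening paragraph is not valid: the correspondence principle converts \emph{iterated} intersections $g_{1}^{-1}S\cap\cdots\cap g_{n}^{-1}S$ into integrals of products, so knowing that a single system $(X,\mu,T)$ with a single non-null $A$ satisfies $\mu(A\cap T^{g}A)>0$ for all $g$ in a shifted $\fpi$ set gives only pairwise information (each $S\cap g^{-1}S$ has positive upper density, i.e.\ $g\in S^{-1}S$) and does not place any shifted finite products set inside $S$. To build $\fpi(g_{n})$ inside a shift of $S$ one must keep the sets $S_{i+1}=g_{i+1}^{-1}S_{i}\cap S_{i}$ large at every stage, which forces a \emph{fresh} application of the correspondence principle to each $S_{i}$; there is no single-system recurrence statement to which the theorem reduces. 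The paper runs exactly this induction, and the real work is in verifying that each $S_{i}$ is again substantial of the same special form, namely $S_{i}\supseteq U_{i}^{10}E_{i}U_{i}^{2}$ with $E_{i}\subset G_{0}$ of positive upper density in $G_{0}$. This is where Lemma~\ref{lem:folner-seq-restriction-to-subgroup} enters, both to produce the initial shift moving the positive-density trace of $W$ from a coset of $G_{0}$ into $G_{0}$ itself --- which is the \emph{only} source of the shift in the conclusion --- and to restore this form after each intersection.

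Second, your third paragraph concedes the decisive step, and the machinery you invoke there (IP multiple recurrence, the Furstenberg distal tower, coordinated Hindman refinements at every level, ``accumulated constants'' producing $t$) is both not carried out and not needed. Since each stage of the induction requires only \emph{single} recurrence $\int\xi\cdot T^{g}\xi\intd\mu>0$, the splitting $\lp^{2}(X,\mathscr{B},\mu)=\kron(T)\oplus\wm(T)$ suffices: the weakly mixing component contributes less than $\epsilon$ for a density-one set of $g$, while $T^{g}$ acts trivially on $\kron(T)$ for $g\in G_{0}$ (finite-dimensional invariant subspaces carry finite-dimensional representations, which kill $G_{0}$), so $\|\condex{\xi}{\kron(T)}-T^{g}\condex{\xi}{\kron(T)}\|$ is small for every $g$ in a slightly thickened copy $WE_{i}$ of $E_{i}\subset G_{0}$. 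Choosing $g_{i+1}$ in the intersection of these two large sets, off a compact exhaustion, keeps the induction going. Your idempotent-ultrafilter construction of an IP set inside $\phi^{-1}(V)$ is correct as far as it goes, but it is uncoordinated with membership of the $g_{i}$ in the sets $S_{i}$ themselves, and it is precisely that membership which forces the products $\inc_{\alpha}(g_{n})$ into the shift of $S$.
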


Thus we have the following one-sided trichotomy for \lcsca{} groups that are not compact.

\begin{theorem}
\label{thm:lcscaClassification}
Let $G$ be a \lcsca{} group that is not compact.
Then exactly one of the following holds.
\begin{enumerate}
\item $G$ is WM.
Then every substantial subset of $G$ contains an increasing finite products set.
\item $G$ is not WM, but $G/G_0$ is compact.
Then every substantial subset of $G$ contains a left shift of an increasing finite products set and a right shift of an increasing finite products set.
However, there are substantial sets that do not contain a set of measurable recurrence.
\item
\label{thm:lcscaClassification:}
$G/G_0$ is not compact.
Then there are substantial subsets with density arbitrarily close to 1 no shift of which contains a set of measurable recurrence.
\end{enumerate}
\end{theorem}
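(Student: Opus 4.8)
The plan is to assemble the trichotomy from Theorems~\ref{mainTheorem:WM:lcsc}, \ref{thm:hindman-G0-cocompact} and~\ref{thm:straus-set}, together with a handful of elementary observations. That the three alternatives genuinely partition the non-compact \lcsca{} groups is clear: $G$ is WM precisely when $G_0 = G$, and then $G/G_0$ is trivial, hence compact; so ``$G$ WM'', ``$G$ not WM but $G/G_0$ compact'' and ``$G/G_0$ not compact'' are mutually exclusive and exhaustive. One should also record that in the middle case $G$ is unimodular --- otherwise the modular homomorphism $\modular\colon G\to\R_{>0}$ would be a nontrivial one-dimensional representation, and $G/G_0$ would surject onto the non-compact group $\im\modular$ --- which is what makes the word ``substantial'' meaningful there. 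Case~(1) is then immediate: by Theorem~\ref{mainTheorem:WM:lcsc} a substantial set contains a two-sided finite products set $\fp(g_n)$, and, as noted after Definition~\ref{def:finProdSets}, $\fpi(g_n)\subseteq\fp(g_n)$.

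For the finite-products statement of case~(2) I would apply Theorem~\ref{thm:hindman-G0-cocompact} to a substantial set $S$, obtaining a left shift $t\fpi(g_n)$ or a right shift $\fpi(g_n)t$ of an increasing finite products set inside $S$. These two conclusions are in fact equivalent: putting $h_n := tg_nt^{-1}$, which still escapes to infinity (conjugation by a fixed element is a homeomorphism), one has $\inc_\alpha(h_n) = t\,\inc_\alpha(g_n)\,t^{-1}$ for every $\alpha$, hence $t\fpi(g_n) = \fpi(h_n)\,t$, and symmetrically a right shift of an increasing finite products set is a left shift of one. So $S$ contains both a left shift and a right shift of increasing finite products sets. (If $G$ happens to be WM-by-compact one may instead quote Theorem~\ref{mainTheorem:virtuallyWM:lcsc}, which even yields two-sided finite products sets, but Theorem~\ref{thm:hindman-G0-cocompact} is what covers the general $G/G_0$-cocompact case.)

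For the remaining assertion of case~(2), that some substantial set fails to be a set of measurable recurrence (equivalently, contains no such set), I would build it by hand. Since $G$ is not WM it carries a continuous homomorphism $\phi\colon G\to K$ with dense image onto a nontrivial, second countable, compact group $K$ (take $K$ to be the closure of the image of a nontrivial finite-dimensional representation). Choose a small open ball $A_0$ about $\idG$ in a bi-invariant metric on $K$ with $\overline{A_0A_0^{-1}}\ne K$, set $\tilde V := K\setminus\overline{A_0A_0^{-1}}$, and pick a nonempty open $V$ with $\overline V\subseteq\tilde V$; a compactness argument in $K$ produces an open $N\ni\idG$ with $N\overline VN\subseteq\tilde V$. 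Then $E := \phi^{-1}(\tilde V)$ is substantial, since $\phi^{-1}(N)\,\phi^{-1}(V)\,\phi^{-1}(N)\subseteq E$ and $\phi^{-1}(V)$ is left syndetic --- finitely many translates $\phi(g_i)V$ cover $K$ by density of $\phi(G)$ --- hence of positive upper density for every two-sided \folner{} sequence. On the other hand, letting $G$ act on $(K,\haar_K)$ by $g\cdot k=\phi(g)k$, a jointly continuous measure-preserving action on a separable probability space, for each $g\in E$ one has $\phi(g)\in\tilde V$, so $\phi(g)A_0\cap A_0=\emptyset$ and hence $\haar_K(A_0\cap T^gA_0)=0$; since $\haar_K(A_0)>0$, this shows $E$ is not a set of measurable recurrence. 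Case~(3) is Theorem~\ref{thm:straus-set}: applied to a \folner{} sequence $\Phi$ (a two-sided one when $G$ is unimodular, which is exactly when ``substantial'' is defined) and to $\epsilon>0$ it yields $E$ with $\dens_\Phi(E)>1-\epsilon$ such that no set $KEK$ with $K\subseteq G$ compact contains a set of measurable recurrence; every one- or two-sided shift of $E$ lies inside such a set $KEK$ for a suitable finite $K$, so no shift of $E$ contains a set of measurable recurrence.

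The proof is thus mostly bookkeeping, and the only point needing real care is that the set produced by the construction behind Theorem~\ref{thm:straus-set} can be taken to be substantial --- concretely, that it contains $UWU$ for some open neighbourhood $U$ of $\idG$ and some positive-density $W$ --- which I would verify by revisiting that construction in Section~\ref{sec:notvirtuallyWM} (the sets removed from $G$ there are, up to translation, open return-time sets for the auxiliary actions, so the complement retains a suitable open core). Everything else reduces to the cited theorems together with the conjugation identity $t\fpi(g_n)=\fpi(tg_nt^{-1})t$.
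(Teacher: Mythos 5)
Your assembly is essentially the route the paper intends: Theorem~\ref{thm:lcscaClassification} is stated without a separate proof, as a corollary of Theorems~\ref{mainTheorem:WM:lcsc}, \ref{thm:hindman-G0-cocompact} and~\ref{thm:straus-set}, and your treatment of cases (1) and (2) is correct. In particular the conjugation identity $t\,\fpi(g_n)=\fpi(tg_nt^{-1})\,t$ is exactly what is needed to upgrade the ``left or right shift'' of Theorem~\ref{thm:hindman-G0-cocompact} to ``a left shift and a right shift''; the unimodularity observation in case (2) is right (the modular function vanishes on $G_0$ and a compact subgroup of $(0,\infty)$ is trivial); and your hand-built non-recurrent substantial set is the same device the paper uses in the proof of Theorem~\ref{mainTheorem:virtuallyWM:lcsc}, namely pulling back a set separated from the identity through a homomorphism onto a dense subgroup of a nontrivial compact group and witnessing non-recurrence by the translation action on that group with Haar measure.

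The one genuine weak spot is the substantiality claim in case (3), and the repair you sketch rests on a backwards heuristic: in Section~\ref{sec:notvirtuallyWM} the removed sets $A_n'$ are (up to deleting compact pieces) \emph{open} return-time sets, so the complement $E=G\setminus C$ is \emph{closed}, and a closed set of density close to $1$ need not contain any set of the form $UWU$. You do not need to reopen that construction. Instead take a symmetric, relatively compact, open neighbourhood $U$ of $\idG$ and put $S:=UEU$. Then $S$ is substantial with $W=E$ itself (apply Theorem~\ref{thm:straus-set} to a two-sided \folner{} sequence in the unimodular case, or pass to an associated two-sided Reiter sequence, so that $E$ qualifies in Definition~\ref{def:substantial}); $\lowerdens_\Phi(S)\ge\dens_\Phi(E)>1-\epsilon$ since $\idG\in U$; and every shift $gS$ or $Sg$ is contained in $KEK$ for the compact set $K=\overline{U}\cup g\overline{U}\cup\overline{U}g$, hence contains no set of measurable recurrence. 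This is precisely why Theorem~\ref{thm:straus-set} is formulated for all sets $KEK$ with $K$ compact rather than merely for shifts of $E$. With that substitution your argument is complete.
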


The rest of the paper runs as follows.
In the next section we recall some necessary facts about locally compact groups, \folner{} conditions and almost periodic functions, and what we need about topological dynamics, sets of measurable recurrence, and certain classes of large subsets of groups.
In Section~\ref{sec:fc} we discuss a two-sided Furstenberg correspondence principle that relates ergodic theory to combinatorics.
Section~\ref{sec:WM} starts with the necessary background on magic extensions and the multiple recurrence results these imply.
After using these results to prove Theorem~\ref{mainTheorem:WM:lcsc}, we give an example to show that the result is not always true for left substantial sets.
We conclude the section with a proof of Theorem~\ref{mainTheorem:virtuallyWM:lcsc}.
In Section~\ref{sec:notvirtuallyWM} we prove Theorem~\ref{thm:straus-set} using a version of the monotone convergence theorem for density and describe some combinatorial properties of the sets the theorem produces.
Lastly, in Section \ref{sec:Extras}, we prove Theorem~\ref{thm:large-lower-non-PWS} and use this result to exhibit a topological dynamical system $(X,G)$ in which the only minimal closed, invariant subset of $X$ is a singleton, but having an invariant measure that is non-atomic.

We would like to thank Klaus Schmidt for pointing out Example~\ref{ex:klaus-schmidt}.

\section{Preliminaries}

Throughout this paper, we write ``locally compact group'' as a shorthand for ``locally compact, Hausdorff topological group''.

\label{sec:Preliminaries}
\subsection{Locally compact groups}
We will prove most of our results for \lcsca{} groups.
Working at this level of generality exposes more completely the differences between left multiplication and right multiplication.

Throughout this section $G$ denotes a locally compact, second countable group and $\haar$ is a fixed left Haar measure on $G$.
Given $g$ in $G$ and a function $f : G \to \mathbb{R}$ we denote by $\lmult_g f$ and $\rmult_g f$ the functions $G \to \mathbb{R}$ defined by $(\lmult_g f)(x) = f(g^{-1}x)$ and $(\rmult_g f)(x) = f(xg)$ respectively.
With the exception of these $\lmult$ and $\rmult$ being lowercase, we follow the notational conventions in \cite{MR1397028}.
For instance, $\| \cdot \|_\infty$ denotes the supremum norm on bounded functions, and given a function $f$ on $G$, we define $\check{f}$ by $\check{f}(g) = f(g^{-1})$ for all $g$ in $G$.

The \define{modular function} of $G$ is the unique continuous homomorphism $\modular : G \to (0,\infty)$ such that $\modular(g) \|\rmult_{g}f\|_{1}=\|f\|_{1}$ for every $g\in G$ and $f\in \lp^1(G,\haar)$.
Recall that $G$ is said to be \define{unimodular} if $\modular = 1$.
We record the following for completeness.

\begin{lemma}
\label{lem:WMunimod}
Any locally compact, WM group $G$ is unimodular.
\end{lemma}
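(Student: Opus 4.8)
The plan is to show that a locally compact group which fails to be unimodular must admit a non-trivial one-dimensional unitary representation, and this immediately contradicts the WM hypothesis. The starting observation is that the modular function $\modular : G \to (0,\infty)$ is, by definition, a continuous homomorphism, and that $\log : \bigl((0,\infty),\cdot\bigr) \to (\R,+)$ is an isomorphism of topological groups. Hence it will be enough to convert a non-trivial continuous homomorphism $G \to (\R,+)$ into a non-trivial continuous homomorphism $G \to \mathbb{T}$, where $\mathbb{T}$ is the unitary group of the one-dimensional complex Hilbert space $\mathbb{C}$.

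Concretely, I would argue by contradiction. Suppose $G$ is not unimodular. Then $\phi := \log\circ\modular : G \to (\R,+)$ is a continuous homomorphism that is not identically zero, so there is some $g_{0}\in G$ with $\phi(g_{0})\neq 0$. Define $\chi : G \to \mathbb{T}$ by
\begin{equation*}
\chi(x) = \exp\Bigl( \frac{\pi i}{\phi(g_{0})}\, \phi(x) \Bigr).
\end{equation*}
Since $\phi$ is a continuous homomorphism into $(\R,+)$ and $t\mapsto e^{it}$ is a continuous homomorphism $(\R,+)\to\mathbb{T}$, the map $\chi$ is a continuous homomorphism from $G$ to $\mathbb{T}$, i.e.\ a one-dimensional representation of $G$. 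It is non-trivial because $\chi(g_{0}) = e^{i\pi} = -1 \neq 1$.

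This contradicts the assumption that $G$ has no non-trivial finite-dimensional representations, so $\modular$ must be identically $1$ and $G$ is unimodular. I do not expect any genuine obstacle here; the only slightly delicate point is the normalization of the exponent by $\phi(g_{0})$, which is included purely to guarantee that the character $\chi$ takes a value different from $1$ and is therefore manifestly non-trivial. One could alternatively note that the non-trivial subgroup $\phi(G)$ of $\R$ is either dense or infinite cyclic and treat the two cases separately (composing with a suitable character of $\R$ or of $\Z$), but the one-line normalization avoids this case split.
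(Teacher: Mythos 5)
Your proof is correct and follows essentially the same route as the paper: both arguments compose the modular function with a character of $(0,\infty)$ to obtain a one-dimensional unitary representation, which the WM hypothesis forces to be trivial. The only cosmetic difference is that you argue by contradiction and normalize a single character so that it visibly takes the value $-1$, whereas the paper uses a family of characters whose kernels intersect trivially; both are fine.
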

\begin{proof}
Composing the modular function $\modular$ of $G$ with any character of $(0,\infty)$ gives a representation of $G$ on $\mathbb{C}$.
Since $G$ is WM this representation is trivial, so the modular function takes values in the kernel of the character.
Choosing characters whose kernels intersect trivially shows that $\modular = 1$.
\end{proof}

The space $\lp^1(G,\haar)$ becomes an involutive Banach algebra upon defining an isometric involution $f \mapsto f^*$ by $f^{*}(x) = \overline{f(x^{-1})} \modular(x^{-1})$ and taking the convolution
\begin{equation*}
(f \conv h)(x) = \int f(y) h(y^{-1} x) \intd\haar(y) = \int f(x y) h(y^{-1}) \intd\haar(y)
\end{equation*}
as the multiplication.
We have $\lmult_{g}(f \conv h) = (\lmult_{g}f) \conv h$ and $\rmult_{g}(f \conv h)=f \conv (\rmult_{g}h)$ for any $g\in G$ and any functions $f,h$ in $\lp^1(G,\haar)$.

\begin{definition}
A function $f : G \to \mathbb{R}$ is \define{left uniformly continuous} if, for any $\epsilon > 0$, there is a neighborhood $V$ of $\idG$ in $G$ such that $\|\lmult_{v}f - f\|_{\infty} < \epsilon$ for all $v$ in $V$ and \define{right uniformly continuous} if, for any $\epsilon > 0$, there is a neighborhood $V$ of $\idG$ in $G$ such that $\|\rmult_{v}f - f\|_{\infty} < \epsilon$ for all $v$ in $V$.
A function is \define{uniformly continuous} if it is both left and right uniformly continuous.
\end{definition}

Note that the space of uniformly continuous functions is closed in the supremum norm.
Since $l$ is a continous action of $G$ on $\lp^{1}(G,\haar)$, the convolution $f\conv h$ is left uniformly continuous and the convolution $h\conv \check{f}$ is right uniformly continuous whenever $f\in\lp^{1}(G,\haar)$ and $h\in\lp^{\infty}(G,\haar)$.

We conclude the discussion of general \lcsc{} groups with the following basic fact that we will use often.
\begin{lemma}
\label{lem:lcsc-union-of-compact}
Let $G$ be a \lcsc{} group.
Then there exists an increasing sequence $K_{1}\subset K_{2}\subset \cdots$ of compact subsets of $G$ such that every compact subset of $G$ is contained in some $K_N$.
In particular $G= \cup \{ K_N : N \in \N \}$.
\end{lemma}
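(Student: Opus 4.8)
The plan is to produce a \emph{normal exhaustion} of $G$: an increasing sequence of compact sets, each contained in the interior of the next. Any such sequence automatically has the stated absorption property, because the interiors then form an open cover of $G$, and an arbitrary compact set is covered by finitely many of them, hence — the interiors being nested — by a single one. So the work is entirely in constructing the sequence.

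First I would check that $G$ is $\sigma$-compact. Being locally compact and Hausdorff, every point of $G$ has an open neighbourhood with compact closure; since $G$ is second countable, slipping a basic open set in between shows that the members of a fixed countable basis that have compact closure already cover $G$. Enumerating them produces open sets $V_1, V_2, \dots$ with each $\overline{V_n}$ compact and $G = \bigcup_n \overline{V_n}$; set $C_n = \overline{V_n}$. Along the way I would record the auxiliary observation that every compact subset $A$ of $G$ lies in some open set with compact closure — cover $A$ by finitely many basic open sets of compact closure and take their union.

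Next I would build the $K_N$ by recursion. Let $K_1 = \overline{W_1}$ for an open set $W_1 \supset C_1$ of compact closure, and, given $K_N$, choose (by the auxiliary observation) an open set $W_{N+1}$ of compact closure containing the compact set $K_N \cup C_{N+1}$, and put $K_{N+1} = \overline{W_{N+1}}$. Then each $K_N$ is compact, $K_N \subset W_{N+1} \subset \operatorname{int}(K_{N+1})$, and $C_N \subset K_N$, so the $K_N$ increase and $G = \bigcup_N C_N \subset \bigcup_N K_N$. Finally, the interiors $\operatorname{int}(K_N)$ increase with $N$ and $\bigcup_N \operatorname{int}(K_{N+1}) \supset \bigcup_N K_N = G$, so $\{\operatorname{int}(K_N) : N \in \N\}$ is an open cover of $G$; for compact $L \subset G$, finitely many of these nested interiors cover $L$, whence $L \subset \operatorname{int}(K_N) \subset K_N$ for some $N$.

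There is no genuine obstacle here — this is the standard exhaustion argument. The only point needing a little care is the recursion step, namely knowing that a relatively compact open set can be found around $K_N \cup C_{N+1}$; this is exactly the auxiliary observation, and it is precisely what places $K_N$ in the interior of $K_{N+1}$ and so makes the concluding covering argument go through. (Second countability enters only through the countable basis of relatively compact open sets used to get $\sigma$-compactness and the auxiliary observation.)
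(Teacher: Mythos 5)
Your proof is correct, but it takes a different route from the paper. The paper exploits the group structure: it fixes a compact neighborhood $U$ of $\idG$ with interior $V$ and a dense sequence $(g_n)$, sets $K_N = Ug_1 \cup \cdots \cup Ug_N$, and observes that the translates $Vg_n$ form an open cover of $G$ (for any $x$, density puts some $g_n$ in the nonempty open set $V^{-1}x$), so any compact set is absorbed by some $K_N$ via a finite subcover. Your argument is the purely topological normal-exhaustion construction for a locally compact, Hausdorff, $\sigma$-compact space: it never uses that $G$ is a group, and it yields the slightly stronger conclusion $K_N \subset \operatorname{int}(K_{N+1})$, which the paper neither needs nor obtains (its $K_N$ need not be nested in interiors, only increasing). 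The paper's version is shorter because a single compact identity neighborhood does the work of your auxiliary observation; yours is more general and would apply verbatim to non-group spaces. Both establish exactly the statement claimed.
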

\begin{proof}
Let $g_{1},g_{2},\dots$ be a dense sequence in $G$ and let $U$ be a compact neighborhood of $\idG$.
Let $K_{N} = Ug_1 \cup \cdots \cup Ug_N$ for each $N$ in $\N$.
Let $V$ be the interior of $U$.
For any compact subset $K$ of $G$ we can find $N$ in $\N$ such that $Vg_1 \cup \cdots \cup Vg_N \supset K$.
Thus $K \subset K_N$.
Since $G$ is locally compact the sets $K_N$ cover $G$.
\end{proof}

\subsection{\folner{} conditions}
\label{sec:folnerConditions}
We will make use of two \folner{} conditions: \folner{} sequences and Reiter sequences.

We remark that it suffices to consider countably many compact sets $K$ when proving uniformity in \eqref{eq:def-left-folner}.
This follows from Lemma~\ref{lem:lcsc-union-of-compact}.
In fact, it even suffices to verify \eqref{eq:def-left-folner} for almost every $g$, since the local uniformity then follows from Egorov's theorem and continuity of convolutions. 

Every locally compact, second countable, amenable group has a left \folner{} sequence \cite[Theorem 4.16]{MR961261}.
Since we fixed a \emph{left} Haar measure $\haar$ to begin with, it is not immediately clear that such groups have right \folner{} sequences.

\begin{lemma}
Let $G$ be a locally compact, second countable, amenable group with a left Haar measure $\haar$.
Then $G$ has a right \folner{} sequence if and only if $G$ is unimodular.
\end{lemma}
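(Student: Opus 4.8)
The plan is to prove both directions by relating right Følner sequences for the left Haar measure $\haar$ to left Følner sequences for the right Haar measure, the bridge being the modular function $\modular$.

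For the ``only if'' direction, suppose $\Phi$ is a right \folner{} sequence, so $\haar(\Phi_N g \symdiff \Phi_N)/\haar(\Phi_N) \to 0$ uniformly on compact sets. I would test this against the modular function. Recall $\haar(\Phi_N g) = \modular(g)^{-1}\haar(\Phi_N)$ (this is the defining relation $\modular(g)\|\rmult_g f\|_1 = \|f\|_1$ applied to $f = \ind{\Phi_N}$, noting $\rmult_g \ind{\Phi_N} = \ind{\Phi_N g^{-1}}$, so one must be careful with which side $g$ acts on, but the upshot is $\haar(\Phi_N g)/\haar(\Phi_N)$ is the constant $\modular(g)^{\pm 1}$ independent of $N$). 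On the other hand, from $\haar(\Phi_N g \cap \Phi_N)/\haar(\Phi_N) \to 1$ we get $\haar(\Phi_N g)/\haar(\Phi_N) \to 1$ as well, since $\haar(\Phi_N g \cap \Phi_N) \le \haar(\Phi_N g)$ and $\haar(\Phi_N g) = \haar(\Phi_N g \cap \Phi_N) + \haar(\Phi_N g \setminus \Phi_N) \le \haar(\Phi_N g \cap \Phi_N) + \haar(\Phi_N \setminus \Phi_N g)\cdot(\text{correction})$ — more cleanly, $|\haar(\Phi_N g) - \haar(\Phi_N)| \le \haar(\Phi_N g \symdiff \Phi_N) = o(\haar(\Phi_N))$. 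Combining, the constant $\modular(g)^{\pm 1}$ equals $1$ for every $g \in G$, hence $\modular \equiv 1$ and $G$ is unimodular.

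For the ``if'' direction, suppose $G$ is unimodular, so $\haar$ is also a right Haar measure. Now I would invoke the existence of a left \folner{} sequence from \cite[Theorem 4.16]{MR961261} applied to the opposite group $G^{\mathrm{op}}$ (same underlying space, reversed multiplication), which is again \lcsca{}; a left \folner{} sequence in $G^{\mathrm{op}}$ with respect to its left Haar measure is exactly a right \folner{} sequence in $G$, and unimodularity guarantees the Haar measure of $G^{\mathrm{op}}$ can be taken to be $\haar$ (up to a scalar, which does not affect the Følner ratios). Alternatively, one can take a left \folner{} sequence $\Psi_N$ in $G$ and set $\Phi_N = \Psi_N^{-1}$: since the pushforward of $\haar$ under inversion is a right Haar measure, hence equals $c\,\haar$ for a constant $c > 0$ by unimodularity, the defining condition $\haar(\Psi_N \cap g\Psi_N)/\haar(\Psi_N) \to 1$ transforms into $\haar(\Phi_N \cap \Phi_N g^{-1})/\haar(\Phi_N) \to 1$ uniformly on compacta, which is the right \folner{} condition.

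The main obstacle is bookkeeping with the modular function and with the inversion map: getting the direction of the group action and the power of $\modular$ exactly right (so as not to conflate $\modular(g)$ with $\modular(g^{-1})$), and confirming that inversion sends a left Haar measure to a right Haar measure so that the scalar ambiguity is harmless. Once these normalizations are pinned down, both implications are short. I would also double-check that $\Psi_N^{-1}$ is still compact of positive measure (immediate) and that the uniformity over compact sets is preserved under $g \mapsto g^{-1}$ (it is, since inversion is a homeomorphism carrying compact sets to compact sets).
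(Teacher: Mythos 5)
Your proposal is correct and follows essentially the same route as the paper: the ``if'' direction via inverting a left \folner{} sequence (using that inversion carries $\haar$ to a right Haar measure, which equals $\haar$ by unimodularity), and the ``only if'' direction via the identity $\haar(\Phi_N g) = \modular(g)^{\pm 1}\haar(\Phi_N)$ forcing $\modular \equiv 1$ (the paper phrases this contrapositively by choosing $g$ with $\modular(g)\ge 3$). The sign ambiguity in the power of $\modular$ that you flag is harmless, exactly as you say.
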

\begin{proof}
When $G$ is unimodular the right Haar measure $\tilde{\haar}$ defined by $\tilde{\haar}(E) = \haar(E^{-1})$ agrees with $\haar$ on Borel sets.
If follows from this that $N \mapsto \Phi_N^{-1}$ is a right \folner{} sequence whenever $\Phi$ is a left \folner{} sequence.
On the other hand, if $G$ is not unimodular then $\modular$ is unbounded so there is some $g$ in $G$ with $\modular(g) \ge 3$.
Thus $\haar(E g \symdiff E) \ge 2\haar(E)$ for all Borel sets $E$, precluding the existance of a right \folner{} sequence.
\end{proof}

Thus it is impossible to find a two-sided \folner{} sequence in a general locally compact, second countable, amenable group.
However, by relaxing the requirement that $\Phi$ be a sequence of sets we can overcome this problem.

\begin{definition}
A sequence $\Phi$ of non-negative functions in $\lp^1(G,\haar)$ each having integral $1$ is called a \define{left Reiter sequence} if
\begin{equation}
\label{eqn:leftReiterDef}
\| \lmult_{g}\Phi_{N} - \Phi_{N}\|_{1} \to 0
\end{equation}
for every $g\in G$, a \define{right Reiter sequence} if
\begin{equation}
\label{eqn:reitReiterDef}
\|\modular(g)\rmult_{g}\Phi_{N} - \Phi_{N}\|_{1} \to 0
\end{equation}
for every $g\in G$, and a \define{two-sided Reiter sequence} if it is both left and right Reiter.
\end{definition}

Note that if $\Phi$ is a left \folner{} sequence in a locally compact, second countable group $G$ then
\begin{equation}
\label{eqn:folnerToAsymptotic}
N \mapsto \frac{1}{\haar(\Phi_N)} \cdot \ind{\Phi_N}
\end{equation}
is a left Reiter sequence.
Thus in particular every locally compact, second countable, amenable group $G$ has a left Reiter sequence.
Moreover, given a left \folner{} sequence $\Phi$ the sequence
\begin{equation*}
N \mapsto \frac{1}{\haar(\Phi_N)^2} \cdot \ind{\Phi_N} \conv \ind{\Phi_N}^*
\end{equation*}
is a two-sided Reiter sequence, so every locally compact, second countable, amenable group has such a sequence.
Note also that, when $G$ is unimodular, every left/right/two-sided \folner{} sequence $\Phi$ induces, via \eqref{eqn:folnerToAsymptotic}, a left/right/two-sided Reiter sequence.

Reiter sequences give rise to a notion of density that agrees with the usual notion of density when the sequence happens to arise from a \folner{} sequence as in \eqref{eqn:folnerToAsymptotic}.

\begin{definition}
Let $\Phi$ be a left/right/two-sided Reiter sequence in $\lp^1(G,\haar)$.
Given a Borel subset $E$ of $G$, denote by
\begin{equation*}
\upperdens_\Phi(E) = \limsup_{N \to \infty} \int \ind{E} \cdot \Phi_N \intd\haar
\end{equation*}
the \define{upper density} of $E$ with respect to $\Phi$.
The \define{lower density} of $E$ with respect to $\Phi$, which is denoted $\lowerdens_\Phi(E)$, is defined as the corresponding $\liminf$, and the \define{density} of $E$ with respect to $\Phi$ is the value $\dens_\Phi(E)$ of the limit, if it exists.
\end{definition}

In unimodular groups, the standard slicing argument allows one to construct a two-sided \folner{} sequence from a given two-sided Reiter sequence.
We will make use of a relativized version of this construction, in which the resulting \folner{} sequence assigns to a given subset a lower density not undercutting the upper density with respect to the given two-sided Reiter sequence.
\begin{proposition}
\label{prop:slicing}
Let $G$ be a locally compact, second countable, amenable group that is unimodular, and let $\Phi$ be a two-sided Reiter sequence in $\lp^1(G,\haar)$.
Then for every Borel subset $E\subset G$ there exists a two-sided \folner{} sequence $\Psi$ such that $\lowerdens_{\Psi}(E) \geq \upperdens_{\Phi}(E)$.
\end{proposition}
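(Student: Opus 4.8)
The plan is to realize each member of $\Psi$ as (a compact subset of) a super-level set $\{\Phi_{M}>t\}$ of some $\Phi_M$, with the level $t$ chosen by a pigeonholing argument. First I would pass to a subsequence of $\Phi$ along which $\int\ind{E}\Phi_N\intd\haar$ converges to $\alpha:=\upperdens_\Phi(E)$; since a subsequence of a two-sided Reiter sequence is again one, after relabelling we may assume $\int\ind{E}\Phi_N\intd\haar\to\alpha$. Fix a symmetric exhaustion $K_1\subset K_2\subset\cdots$ of $G$ by compact sets (using Lemma~\ref{lem:lcsc-union-of-compact} and replacing $K_j$ by $K_j\cup K_j^{-1}$). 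Using unimodularity, the layer-cake formula gives, for $\psi\ge 0$ in $\lp^1(G,\haar)$ with $\int\psi\intd\haar=1$ and $A_t:=\{\psi>t\}$ (of measure at most $1/t$ when $t>0$), the identities $\int_0^\infty\haar(A_t)\intd t=1$, $\int_0^\infty\haar(E\cap A_t)\intd t=\int\ind{E}\psi\intd\haar$, $\int_0^\infty\haar(gA_t\symdiff A_t)\intd t=\|\lmult_g\psi-\psi\|_1$, and $\int_0^\infty\haar(A_tg\symdiff A_t)\intd t=\|\rmult_{g^{-1}}\psi-\psi\|_1$.

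Next, for each $j$ I would choose $M=M_j$ (with $M_j$ increasing) so large that $|\int\ind{E}\Phi_{M_j}\intd\haar-\alpha|$ is tiny and that $\pi_j:=\int_{K_j}\bigl(\|\lmult_g\Phi_{M_j}-\Phi_{M_j}\|_1+\|\rmult_g\Phi_{M_j}-\Phi_{M_j}\|_1\bigr)\intd\haar(g)$ is tiny; this is possible because each integrand tends to $0$ pointwise by the Reiter conditions and is bounded by $2$, so dominated convergence applies on the fixed finite-measure set $K_j$. Writing $A_t=\{\Phi_{M_j}>t\}$ and $\intd\nu(t):=\haar(A_t)\intd t$ (a probability measure on $(0,\infty)$), the identities above together with Fubini and the symmetry of $K_j$ show that $r(t):=\haar(E\cap A_t)/\haar(A_t)$ has $\nu$-average $\int\ind{E}\Phi_{M_j}\intd\haar$, while $Q(t):=\haar(A_t)^{-1}\int_{K_j}\bigl(\haar(gA_t\symdiff A_t)+\haar(A_tg\symdiff A_t)\bigr)\intd\haar(g)$ has $\nu$-average $\pi_j$. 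By Markov's inequality the set $\mathcal{G}_j$ of levels $t$ with $Q(t)\le\sqrt{\pi_j}$ has $\nu(\mathcal{G}_j)\ge 1-\sqrt{\pi_j}$; since $0\le r\le 1$, restricting to $\mathcal{G}_j$ costs at most $\sqrt{\pi_j}$, so the $\nu$-average of $r$ over $\mathcal{G}_j$ is at least $\int\ind{E}\Phi_{M_j}\intd\haar-\sqrt{\pi_j}$ and there is a level $t_j\in\mathcal{G}_j$, of positive $\nu$-measure and hence with $t_j>0$ and $\haar(A_{t_j})\in(0,\infty)$, such that $r(t_j)\ge\alpha-o(1)$. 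For this $t_j$, a further application of Markov's inequality in the variable $g$ gives a Borel set $B_j\subset K_j$ with $\haar(B_j)=O(\pi_j^{1/4})$ outside of which $\haar(gA_{t_j}\symdiff A_{t_j})\le\pi_j^{1/4}\haar(A_{t_j})$ and $\haar(A_{t_j}g\symdiff A_{t_j})\le\pi_j^{1/4}\haar(A_{t_j})$. Finally I would use inner regularity of $\haar$ to replace $A_{t_j}$ by a compact subset $\Psi_j\subset A_{t_j}$ with $\haar(A_{t_j}\setminus\Psi_j)\le 4^{-j}\haar(A_{t_j})$, which perturbs all of the above ratios by $O(4^{-j})$, and (enlarging $M_j$ further if needed) arrange in addition that $\sum_j\pi_j^{1/4}<\infty$.

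It then remains to verify that $\Psi=(\Psi_j)$ works. The lower density bound is immediate: $\haar(E\cap\Psi_j)/\haar(\Psi_j)\ge r(t_j)-O(4^{-j})\ge\alpha-o(1)$, so $\lowerdens_\Psi(E)\ge\alpha=\upperdens_\Phi(E)$. For the two-sided \folner{} property, note $\sum_j\haar(B_j)<\infty$, so by the Borel--Cantelli lemma almost every $g\in G$ lies in only finitely many $B_j$; since $K_j\uparrow G$, for almost every $g$ we have $g\in K_j\setminus B_j$ for all large $j$, and then both $\haar(g\Psi_j\symdiff\Psi_j)/\haar(\Psi_j)$ and $\haar(\Psi_jg\symdiff\Psi_j)/\haar(\Psi_j)$ are $O(\pi_j^{1/4}+4^{-j})\to 0$. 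By the remark in Section~\ref{sec:folnerConditions} that a.e.\ pointwise validity of \eqref{eq:def-left-folner} upgrades to local uniformity (via Egorov's theorem and continuity of convolutions), applied to both the left and right conditions, $\Psi$ is a two-sided \folner{} sequence.

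The step I expect to be the main obstacle is the pigeonhole over levels: there is a genuine tension between wanting $t_j$ to give $E$-density close to $\alpha$ and wanting $\{\Phi_{M_j}>t_j\}$ to be almost invariant, because when $\alpha$ is small the set of "good for $E$" levels need not be a large $\nu$-fraction. The resolution is not to aim for a large fraction but to intersect with the almost-invariant levels $\mathcal{G}_j$, which do form an overwhelming fraction, and to exploit $r\le 1$ so that this restriction only costs $\sqrt{\pi_j}$; this dictates the order in which $M_j$, $\pi_j$, and the Markov thresholds must be chosen. The remaining ingredients — the layer-cake identities, inner regularity, and the a.e.-$g$ criterion for the \folner{} property — are routine.
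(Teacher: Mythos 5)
Your proposal is correct and is essentially the paper's argument: both rest on the layer-cake (slicing) decomposition of $\Phi_{N}$ into normalized superlevel sets, an integrated form of the two-sided Reiter condition over a compact exhaustion, and a pigeonhole over levels that exploits $0\le r\le 1$ so that discarding the non-almost-invariant levels costs only an amount controlled by the integrated Reiter error. The single point of divergence is the final upgrade step: the paper passes from ``almost-invariance for all $x$ in a subset $K'\subset K$ with $\haar(K\setminus K')$ small'' to ``for all $x\in K$'' by applying its claim to $K\cup KK$ and writing $k=k_{1}k_{2}^{-1}$ with $k_{1},k_{2}\in K'$, whereas you keep the exceptional sets $B_{j}$, make their measures summable, and invoke Borel--Cantelli together with the remark in Section~\ref{sec:folnerConditions} that verifying \eqref{eq:def-left-folner} for almost every $g$ suffices --- both routes are valid.
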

This result implies in particular that every \lcsc{} unimodular amenable group admits a two-sided \folner{} sequence.
For a shorter proof of this fact see \cite[I.\textsection 1, Proposition 2]{MR910005}.

\begin{proof}[Proof of Proposition~\ref{prop:slicing}]
Let $r=\upperdens_{\Phi}(E)$.
Passing to a subsequence we may assume that $\dens_{\Phi}(E)=r$.

\begin{claim}
Let $K\subset G$ be a compact set and $\epsilon>0$.
Then there exists a subset $K'\subset K$ with $\haar(K\setminus K')<\epsilon$ and a nonnull compact set $F$ in $G$ such that $\haar(F\symdiff xF)/\haar(F) < \epsilon$ and $\haar(Fx\symdiff F)/\haar(F) < \epsilon$ for every $x\in K'$ and $\haar(E\cap F)/\haar(F) > r - \epsilon$.
\end{claim}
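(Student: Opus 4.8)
The plan is to prove the Claim by a slicing (layer-cake) argument applied to a single Reiter function $\Phi_{N}$ with $N$ large; recall that $\Phi$ has been replaced by a subsequence along which $\int\ind{E}\Phi_{N}\to r$. For $t>0$ write $F_{t}=\{\,g\in G:\Phi_{N}(g)>t\,\}$, a Borel set with $\haar(F_{t})\le t^{-1}$ since $\Phi_{N}\in\lp^{1}(G,\haar)$. From $\Phi_{N}=\int_{0}^{\infty}\ind{F_{t}}\intd t$ and Tonelli's theorem one gets $\int_{0}^{\infty}\haar(F_{t})\intd t=1$ and $\int_{0}^{\infty}\haar(E\cap F_{t})\intd t=\int\ind{E}\Phi_{N}\intd\haar$, and, using that $\{\,g:\lmult_{x}\Phi_{N}(g)>t\,\}=xF_{t}$ and $\{\,g:\rmult_{x^{-1}}\Phi_{N}(g)>t\,\}=F_{t}x$, also
\begin{gather*}
\int_{0}^{\infty}\haar(xF_{t}\symdiff F_{t})\intd t=\|\lmult_{x}\Phi_{N}-\Phi_{N}\|_{1},\\
\int_{0}^{\infty}\haar(F_{t}x\symdiff F_{t})\intd t=\|\rmult_{x^{-1}}\Phi_{N}-\Phi_{N}\|_{1},
\end{gather*}
for every $x\in G$. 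Thus the task reduces to selecting a single level $t_{0}$ such that $F_{t_{0}}$ is almost invariant under most of $K$ while still meeting $E$ in the right proportion.

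We may assume $r\ge\epsilon$, since otherwise $\haar(E\cap F)/\haar(F)>r-\epsilon$ is automatic and the Claim reduces to the usual slicing of a Reiter function. Using that $x\mapsto\lmult_{x}\Phi_{N}$ and $x\mapsto\rmult_{x}\Phi_{N}$ are $\lp^{1}$-continuous, that $\Phi$ is a two-sided Reiter sequence (so, by unimodularity, $\|\lmult_{x}\Phi_{N}-\Phi_{N}\|_{1}\to0$ and $\|\rmult_{x}\Phi_{N}-\Phi_{N}\|_{1}\to0$ pointwise in $x$), and that $\haar(K)<\infty$, the dominated convergence theorem gives $b_{N}\to 0$ as $N\to\infty$, where
\begin{equation*}
b_{N}:=\int_{K}\bigl(\|\lmult_{x}\Phi_{N}-\Phi_{N}\|_{1}+\|\rmult_{x^{-1}}\Phi_{N}-\Phi_{N}\|_{1}\bigr)\intd\haar(x).
\end{equation*}
Fix $\eta\in(0,\epsilon)$, put $\lambda=\epsilon^{2}$, and choose $N$ so large that $\int\ind{E}\Phi_{N}>r-\eta$ and $b_{N}<\lambda(\epsilon-\eta)$. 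Writing $e(t)=\haar(E\cap F_{t})$, $h(t)=\haar(F_{t})$ and $g(t)=\int_{K}\bigl(\haar(xF_{t}\symdiff F_{t})+\haar(F_{t}x\symdiff F_{t})\bigr)\intd\haar(x)$, the identities above read $\int_{0}^{\infty}h=1$, $\int_{0}^{\infty}e>r-\eta$ and $\int_{0}^{\infty}g=b_{N}$. Let $B_{1}=\{t>0:e(t)<(r-\epsilon)h(t)\}$ and $B_{2}=\{t>0:g(t)>\lambda h(t)\}$. On $B_{1}$ one has $\int_{B_{1}}e\le(r-\epsilon)\int_{B_{1}}h\le r-\epsilon$, hence $\int_{B_{1}^{c}}e>\epsilon-\eta$; and, since $e\le h$, on $B_{2}$ one has $\int_{B_{2}}e\le\int_{B_{2}}h\le b_{N}/\lambda<\epsilon-\eta$. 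Therefore $\int_{B_{1}^{c}\setminus B_{2}}e>0$, so there is a level $t_{0}\in B_{1}^{c}\setminus B_{2}$ with $e(t_{0})>0$; then $F:=F_{t_{0}}$ is non-null and of finite measure, with $\haar(E\cap F)\ge(r-\epsilon)\haar(F)$ and $g(t_{0})\le\epsilon^{2}\haar(F)$.

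It remains to extract $K'$ and to make $F$ compact. By Markov's inequality in the variable $x$ applied to the nonnegative integrand defining $g(t_{0})$, the set $K'=\{\,x\in K:\haar(xF\symdiff F)+\haar(Fx\symdiff F)<\epsilon\haar(F)\,\}$ satisfies $\haar(K\setminus K')\le g(t_{0})/(\epsilon\haar(F))\le\epsilon$, and each $x\in K'$ has $\haar(F\symdiff xF)<\epsilon\haar(F)$ and $\haar(Fx\symdiff F)<\epsilon\haar(F)$. Finally, by inner regularity of $\haar$ choose a compact $F'\subset F$ with $\haar(F\setminus F')$ as small as desired; left- and right-invariance of $\haar$ (unimodularity) give $\haar(xF'\symdiff F')\le\haar(xF\symdiff F)+2\haar(F\setminus F')$, similarly for $F'x$, and $\haar(E\cap F')\ge\haar(E\cap F)-\haar(F\setminus F')$ with $\haar(F')\ge\haar(F)-\haar(F\setminus F')$, so a small enough choice of $\haar(F\setminus F')$ preserves all the inequalities. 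Running the whole argument with $\epsilon$ replaced by a small fixed multiple of itself absorbs these losses and upgrades the non-strict density inequality to the strict one stated. I expect the only real obstacle to be the two-set Markov estimate above: the $t$-integral of $h$ over the bad set need not be small (for instance when $r$ is near $0$), so one must instead control the $t$-integrals of $e$ over $B_{1}$ and over $B_{2}$ separately in order to pin down one level good simultaneously for the density and for the almost-invariance; the layer-cake identities, the dominated-convergence input from the Reiter property, the Markov step in $x$, and the passage to a compact subset are then all routine.
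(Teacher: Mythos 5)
Your proof is correct and follows essentially the same route as the paper's: the layer-cake decomposition of $\Phi_N$, the nestedness of the superlevel sets to convert $\|\lmult_x\Phi_N-\Phi_N\|_1$ and $\|\rmult_{x^{-1}}\Phi_N-\Phi_N\|_1$ into integrals of symmetric differences, dominated convergence over $K$ via the two-sided Reiter property and unimodularity, a pigeonhole in the level $t$, and Markov in $x$ to extract $K'$. Your bookkeeping for selecting the good level (bounding $\int e$ over the two bad sets $B_1,B_2$) is a cosmetic variant of the paper's restriction to the set $H$ of levels with good $E$-density, and you additionally handle the compactness of $F$ and the degenerate case $r<\epsilon$ explicitly, which the paper leaves implicit.
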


We have $\Phi_{N} = \int_{0}^{\infty} \ind{A_{N,h}} \intd h = \int_{0}^{\infty} \chi_{N,h} \intd\lambda_{N}(h)$, where $A_{N,h}=\{\Phi_{N}>h\}$ are the superlevel sets, $\chi_{N,h} = \haar(A_{N,h})^{-1} \ind{A_{N,h}}$ are the normalized characteristic functions, and $\lambda_{N}$ is the probability measure on $(0,\infty)$ with density $h\mapsto \haar(A_{N,h})$ with respect to the Lebesgue measure.
We have
\[
r-\epsilon
<
\int \Phi_{N} \ind{E} \intd\haar
=
\int_{0}^{\infty} \int_{E} \chi_{N,h} \intd \haar \intd \lambda_{N}(h)
\]
for sufficiently large $N$.
It follows that the set $H = \{ h : \int_E \chi_{N,h} \intd\haar > r - 2\epsilon \}$ satisfies $\lambda_N(H) \ge \frac{\epsilon}{1-(r-2\epsilon)}$.

Consider
\[
| \lmult_{x}\Phi_{N} - \Phi_{N}| = \Big| \int_{0}^{\infty} \ind{x A_{N,h}} - \ind{A_{N,h}} \intd h \Big|.
\]
The central observation in the slicing argument is that the sets $A_{N,h}$ are nested, so the integrand on the right-hand side cannot take both strictly positive and strictly negative values at any given point.
Therefore the right-hand side equals
\[
\int_{0}^{\infty} | \ind{x A_{N,h}} - \ind{A_{N,h}} | \intd h.
\]
It follows that
\[
\| \lmult_{x}\Phi_{N} - \Phi_{N}\|_{1}
=
\int_{0}^{\infty} \haar(x A_{N,h} \symdiff A_{N,h}) \intd h
=
\int_{0}^{\infty} \frac{\haar(x A_{N,h} \symdiff A_{N,h})}{\haar(A_{N,h})} \intd \lambda_{N}(h).
\]
If $N$ is sufficiently large, then by \eqref{eqn:leftReiterDef} and the dominated convergence theorem
\begin{equation*}
\begin{aligned}
\frac{\epsilon}{1-(r-2\epsilon)}\frac{\epsilon^{2}}{2}
&
>
\int_{K}\|\lmult_{x}\Phi_{N} - \Phi_{N}\|_1 \intd \haar(x)
\\
&
=
\int_{0}^{\infty} \int_{K} \frac{\haar(x A_{N,h} \symdiff A_{N,h})}{\haar(A_{N,h})} \intd \haar(x) \intd \lambda_{N}(h).
\end{aligned}
\end{equation*}
Analogously, by \eqref{eqn:reitReiterDef}, the dominated convergence theorem and, crucially, the fact that $G$ is unimodular, for sufficiently large $N$ we have
\[
\frac{\epsilon}{1-(r-2\epsilon)}\frac{\epsilon^{2}}{2}
>
\int_{0}^{\infty} \int_{K} \frac{\haar(A_{N,h}x \symdiff A_{N,h})}{\haar(A_{N,h})} \intd \haar(x) \intd \lambda_{N}(h).
\]
It follows that
\[
\int_{K} \frac{\haar(x A_{N,h} \symdiff A_{N,h})}{\haar(A_{N,h})} + \frac{\haar(A_{N,h}x \symdiff A_{N,h})}{\haar(A_{N,h})} \intd \haar(x) < \epsilon^{2}
\]
for some $h\in H$.
Therefore $\frac{\haar(x A_{N,h} \symdiff A_{N,h})}{\haar(A_{N,h})} + \frac{\haar(A_{N,h}x \symdiff A_{N,h})}{\haar(A_{N,h})} < \epsilon$ for $x$ in a subset $K'\subset K$ with $\haar(K\setminus K') < \epsilon$, proving the claim.

\begin{claim}
Let $K\subset G$ be a compact set and $\epsilon>0$.
Then there exists a nonnull compact set $F$ in $G$ such that $\haar(xF\symdiff F)/\haar(F) < \epsilon$ and $\haar(Fx\symdiff F)/\haar(F) < \epsilon$ for every $x\in K$ and $\haar(E\cap F)/\haar(F) > r - \epsilon$.
\end{claim}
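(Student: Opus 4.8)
The plan is to derive the second claim from the first by a standard ``fattening'' argument: apply the first claim to a compact enlargement of $K$, and then absorb the missing small set by multiplying by a single, fixed compact neighbourhood of the identity.

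First I would fix, once and for all, a symmetric compact neighbourhood $V$ of $\idG$ with $\haar(V)>0$; such a $V$ exists because $G$ is locally compact. Given $K$ and $\epsilon>0$, put $K_{0}=K\cup V$ and $L=K_{0}V\cup VK_{0}$, a compact set containing $K$, $V$, $KV$ and $VK$, and choose an auxiliary parameter $\epsilon'>0$ with $3\epsilon'<\haar(V)$ and $2\epsilon'<\epsilon$. Applying the first claim to $L$ and $\epsilon'$ produces a set $K'\subset L$ with $\haar(L\setminus K')<\epsilon'$ and a nonnull compact set $F\subset G$ with
\[
\frac{\haar(yF\symdiff F)}{\haar(F)}<\epsilon',\qquad \frac{\haar(Fy\symdiff F)}{\haar(F)}<\epsilon'\quad(y\in K'),\qquad \frac{\haar(E\cap F)}{\haar(F)}>r-\epsilon'.
\]
Since $\epsilon'<\epsilon$, the density estimate required in the second claim is already in hand, and it remains only to upgrade the two Følner estimates from $K'$ to all of $K$ while keeping $F$ fixed.

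Fix $x\in K$ and set $S_{x}=V\cap K'\cap x^{-1}K'\cap K'x^{-1}$. By left-invariance of $\haar$ we have $\haar(V\setminus x^{-1}K')=\haar(xV\setminus K')\le\haar(L\setminus K')<\epsilon'$ (using $xV\subset KV\subset L$), by unimodularity of $G$ we have $\haar(V\setminus K'x^{-1})=\haar(Vx\setminus K')\le\haar(L\setminus K')<\epsilon'$ (using $Vx\subset VK\subset L$), and $\haar(V\setminus K')\le\haar(L\setminus K')<\epsilon'$ directly. Hence $\haar(S_{x})>\haar(V)-3\epsilon'>0$, so $S_{x}\neq\emptyset$; pick $v\in S_{x}$, so that $v$, $xv$ and $vx$ all lie in $K'$. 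Then, by the triangle inequality and left-invariance,
\[
\haar(xF\symdiff F)\le\haar\big(x(F\symdiff vF)\big)+\haar(xvF\symdiff F)=\haar(F\symdiff vF)+\haar(xvF\symdiff F)<2\epsilon'\haar(F),
\]
where the final bound uses $v\in K'$ and $xv\in K'$, and similarly
\[
\haar(Fx\symdiff F)\le\haar\big((F\symdiff Fv)x\big)+\haar(Fvx\symdiff F)=\haar(F\symdiff Fv)+\haar(Fvx\symdiff F)<2\epsilon'\haar(F),
\]
where the middle equality uses unimodularity and the final bound uses $v\in K'$ and $vx\in K'$. Since $2\epsilon'<\epsilon$ and $F$ does not depend on $x$, this proves the claim.

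The argument is essentially routine; the only points needing care are choosing $L$ large enough to contain $V$, $KV$ and $VK$, choosing $\epsilon'$ small enough to simultaneously beat $\haar(V)$ (so that $S_{x}\neq\emptyset$) and to survive the factor-$2$ loss below $\epsilon$, and inserting unimodularity at the two places where a right translate of a symmetric difference is measured --- all of which are needed so that a single element $v$ repairs the left estimate, the right estimate, and the fact that $F$ is itself only $K'$-almost-invariant.
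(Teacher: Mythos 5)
Your proof is correct. It follows the same overall strategy as the paper --- fatten $K$ to a larger compact set, apply the first claim there, and upgrade the almost-everywhere \folner{} estimates to all of $K$ by a triangle inequality --- but the product decomposition used for the upgrade is different. The paper applies the first claim to $\tilde K = K\cup KK$ with $\epsilon/2$, normalizes $\haar(K)>\epsilon$, and deduces from $\haar(k\tilde K\cap\tilde K)>\epsilon$ together with $\haar(\tilde K\setminus K')<\epsilon/2$ that $kK'\cap K'\neq\emptyset$, so that each $k\in K$ factors as $k_{1}k_{2}^{-1}$ with $k_{1},k_{2}\in K'$; the left and right estimates then each follow from a two-term triangle inequality, with unimodularity invoked to turn $\haar(Fk_{1}\symdiff Fk_{2})$ into a sum of two controlled terms. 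You instead introduce a fixed positive-measure symmetric neighbourhood $V$ of $\idG$, fatten to $L=K_{0}V\cup VK_{0}$, and locate a single $v\in V$ with $v$, $xv$, $vx$ all in $K'$, writing $x=(xv)v^{-1}=v^{-1}(vx)$. Your pigeonhole pays three error terms (hence the constraint $3\epsilon'<\haar(V)$, with unimodularity needed already at the stage of estimating $\haar(V\setminus K'x^{-1})$), but it avoids the normalization $\haar(K)>\epsilon$ and repairs the left condition, the right condition, and the $K'$-almost-invariance of $F$ with one auxiliary element. Neither argument is more general than the other; both use unimodularity in an essential and analogous way, and both correctly keep $F$ independent of the point of $K$ being treated.
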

We may assume that $\haar(K)>\epsilon$.
We apply the previous claim to the set $\tilde K := K\cup KK$ with $\epsilon/2$ in place of $\epsilon$.
We obtain a subset $K' \subset \tilde K$ with $\haar(K\setminus K')<\epsilon/2$ and a compact set $F$ in $G$ such that $\haar(xF\symdiff F)/\haar(F) < \epsilon/2$ and $\haar(Fx\symdiff F)/\haar(F) < \epsilon/2$ for every $x\in K'$ and $\haar(E\cap F)/\haar(F) > r - \epsilon$.

Let now $k\in K$.
Then we have $k\tilde K \cap \tilde K \supseteq kK$, so that $\haar(k\tilde K \cap \tilde K) > \epsilon$.
It follows that $k K' \cap K' \neq \emptyset$, so that $k=k_{1}k_{2}^{-1}$ for some $k_{1},k_{2}\in K'$.
Therefore
\begin{multline*}
\haar(kF \symdiff F)
= \haar(k_{2}^{-1}F \symdiff k_{1}^{-1}F)
\leq \haar(k_{2}^{-1}F \symdiff F) + \haar(F \symdiff k_{1}^{-1}F)\\
\leq \haar(F \symdiff k_{2}F) + \haar(k_{1}F \symdiff F)
< \epsilon
\end{multline*}
and, since $G$ is unimodular,
\[
\haar(Fk \symdiff F)
= \haar(Fk_{1} \symdiff Fk_{2})
\leq \haar(Fk_{1} \symdiff F) + \haar(F \symdiff Fk_{2})
< \epsilon.
\]
This proves the claim.

The conclusion of the theorem follows quickly using the observation that it suffices to verify \eqref{eq:def-left-folner} for the increasing sequence of compact sets $K_{1}\subset K_{2} \subset \cdots$ given by Lemma~\ref{lem:lcsc-union-of-compact}: let the sets $\Psi_{N}$ be given by the last claim with the compact sets $K_{N}$ and $\epsilon=2^{-N}$. 
\end{proof}

\begin{definition}
Let $G$ be a locally compact, second countable, amenable group.
Define by
\begin{equation*}
\upbdens(E) = \sup \big\{ \upperdens_\Phi(E) : \Phi \textrm{ a two-sided Reiter sequence} \big\}
\end{equation*}
the \define{two-sided upper Banach density} of a Borel subset $E$ of $G$.
\end{definition}

Note that, by Proposition~\ref{prop:slicing}, when $G$ is unimodular the definition of two-sided upper Banach density is unchanged if one considers the supremum over only the two-sided \folner{} sequences.

\subsection{Almost periodic functions}
\label{subsec:apFunc}
We will now recall the notion of an almost-periodic function on a \lcsc{} group $G$ and its relationship with the finite-dimensional representations of $G$.
Denote by $\Cb(G)$ the Banach space of all bounded continuous functions $f : G \to \mathbb{C}$ equipped with the supremum norm.
The $G$-actions $\lmult$ and $\rmult$ on $\Cb(G)$ are isometric.
A function $f \in \Cb(G)$ is called \define{almost periodic} if one of the following equivalent conditions holds.
\begin{enumerate}
\item\label{def:ap:lmult} The subset $\{ \lmult_g f \,:\, g \in G \}$ of $\Cb(G)$ is relatively compact,
\item\label{def:ap:rmult} the subset $\{ \rmult_g f \,:\, g \in G \}$ of $\Cb(G)$ is relatively compact, or
\item\label{def:ap:compactification} $f$ is the pullback of a continuous function under the maximal topological group compactification $\iota : G \to \bar{G}$ (also called the \define{almost periodic} or \define{Bohr} compactification).
\end{enumerate}
For the equivalence \eqref{def:ap:lmult} $\iff$ \eqref{def:ap:rmult} see \cite[Theorem 9.2]{MR513591} and for the equivalence \eqref{def:ap:rmult} $\iff$ \eqref{def:ap:compactification} see \cite[Remark 9.8]{MR513591}.
Denote by $\ap(G)$ the space of all almost periodic functions on $G$.
It follows from the characterization \eqref{def:ap:compactification} that almost periodic functions are uniformly continuous.

The \define{matrix coefficients} of a finite-dimensional continuous representation give rise to almost-periodic functions on $G$.
Specifically, given a continuous representation $\phi$ of $G$ on a finite-dimensional, complex Hilbert space $V$ and vectors $x,y$ in $V$ we can form the almost-periodic function $f(g) = \langle \phi(g)x, y \rangle$.
\begin{theorem}[{\cite[Theorems 30 and 31]{0009.34902}}]
\label{thm:apCharacterization}
Matrix coefficients span a dense subspace of $\ap(G)$.
\end{theorem}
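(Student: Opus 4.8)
The plan is to reduce the statement to the Peter--Weyl theorem on the compact group $\bar G$ from characterization~\eqref{def:ap:compactification}, matching matrix coefficients on $G$ with matrix coefficients on $\bar G$ via the universal property of the Bohr compactification. Recall that $\iota : G \to \bar G$ is a continuous homomorphism with dense image into a compact Hausdorff group, and that, by the definition of $\ap(G)$ through \eqref{def:ap:compactification}, the space $\ap(G)$ is the image of the pullback map $\iota^* : \cont(\bar G) \to \Cb(G)$, $F \mapsto F \circ \iota$. Since $\iota(G)$ is dense in $\bar G$ and the functions involved are continuous, $\iota^*$ is isometric for the supremum norms; hence $\iota^*$ is an isometric isomorphism of $\cont(\bar G)$ onto $\ap(G)$, and it suffices to prove the density statement on $\bar G$ and transport it back.

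Next I would set up the correspondence between finite-dimensional representations of $G$ and of $\bar G$. On the one hand, if $\psi : \bar G \to U(V)$ is a continuous representation on a finite-dimensional complex Hilbert space $V$, then $\psi \circ \iota$ is a finite-dimensional representation of $G$ in the sense used here, and for $x, y \in V$ the matrix coefficient $\bar g \mapsto \langle \psi(\bar g) x, y \rangle$ on $\bar G$ pulls back under $\iota^*$ to the matrix coefficient $g \mapsto \langle (\psi \circ \iota)(g) x, y \rangle$ of $\psi \circ \iota$. On the other hand, any finite-dimensional representation $\phi : G \to U(V)$ takes values in the compact group $U(V)$, so by the universal property of the maximal topological group compactification it factors as $\phi = \bar\phi \circ \iota$ for a unique continuous representation $\bar\phi : \bar G \to U(V)$; thus every matrix coefficient of $\phi$ is the $\iota^*$-image of a matrix coefficient on $\bar G$. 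Therefore $\iota^*$ carries the linear span of the matrix coefficients of finite-dimensional representations of $\bar G$ precisely onto the linear span of the matrix coefficients of finite-dimensional representations of $G$.

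Combining these observations, the theorem reduces to the assertion that the matrix coefficients of finite-dimensional (automatically unitary, continuous) representations of $\bar G$ span a uniformly dense subspace of $\cont(\bar G)$, which is precisely the Peter--Weyl theorem for the compact Hausdorff group $\bar G$: using normalized Haar measure on $\bar G$ one forms, for suitable kernels, compact self-adjoint convolution operators on $\lp^2(\bar G)$, applies the spectral theorem to split $\lp^2(\bar G)$ into a Hilbert-space direct sum of finite-dimensional bi-invariant subspaces, and then observes that the resulting span of matrix coefficients is a self-adjoint, unital subalgebra of $\cont(\bar G)$ separating points, hence uniformly dense by Stone--Weierstrass. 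Transporting this through the isometric isomorphism $\iota^*$ gives density of the matrix coefficients of finite-dimensional representations of $G$ in $\ap(G)$.

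The only substantive input is the Peter--Weyl theorem on $\bar G$; everything else is bookkeeping with the universal property of $\iota$, so I expect that to be the main (and essentially the sole) obstacle. If one wished to avoid quoting Peter--Weyl for a possibly large group such as $\bar G$, one could instead follow von Neumann's original approach and work directly on $\ap(G)$: the unique invariant mean $M$ on $\ap(G)$ supplies the inner product $\langle f, h \rangle = M(f \overline{h})$, and completing $\ap(G)$ in the associated norm and running the same compact-operator argument there produces the finite-dimensional representations together with their dense span of matrix coefficients without reference to $\bar G$. Either way the crux is the spectral analysis of convolution operators, which is exactly where the finite-dimensionality of the relevant representations is produced.
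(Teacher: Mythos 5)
Your argument is correct. Note first that the paper does not actually prove this statement: it is quoted from von Neumann's original work on almost periodic functions, so there is no in-paper argument to match. Your reduction to the Peter--Weyl theorem on the Bohr compactification is the standard modern route and it goes through: characterization \eqref{def:ap:compactification} identifies $\ap(G)$ isometrically with $\cont(\bar{G})$ via $\iota^*$ (isometry because $\iota(G)$ is dense), the universal property of $\iota$ matches matrix coefficients of finite-dimensional representations of $G$ bijectively with those of $\bar{G}$, and Peter--Weyl plus Stone--Weierstrass finishes on $\bar{G}$. The one load-bearing convention worth flagging is that the factorization $\phi=\bar\phi\circ\iota$ uses compactness of the target, i.e.\ that the paper's finite-dimensional representations are by definition \emph{unitary}; for non-unitary finite-dimensional representations (e.g.\ $t\mapsto e^{t}$ on $\R$) the factorization through $\bar{G}$ fails, but the paper's definition rules these out. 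Von Neumann's own proof in the cited source is essentially your closing alternative: it works intrinsically on $\ap(G)$ with the unique invariant mean and compact self-adjoint convolution operators, producing the finite-dimensional invariant subspaces without ever constructing $\bar{G}$, at the cost of first establishing the mean. Your route buys brevity by quoting Peter--Weyl for the (typically non-metrizable) compact group $\bar{G}$, which is harmless since Peter--Weyl requires no countability hypotheses.
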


The constant functions are always almost-periodic.
There are groups having no other almost-periodic functions (see \cite{MR0002891} and Examples \ref{ex:wm:alt}, \ref{ex:wm:loc-finite}, and \ref{ex:wm:fin-gen}).
Such groups are said to be \define{minimally almost-periodic}.
Theorem \ref{thm:apCharacterization} implies that a group is minimally almost-periodic if and only if it is a WM group.
In view of the Peter--Weyl theorem, non-trivial compact groups are never WM groups.

\begin{example}
\label{ex:wm:alt}
Let $G$ be a countably infinite group that is the union of an increasing sequence of non-cyclic finite simple subgroups.
Then $G$ is WM.
This follows from \cite[Theorem~4.7]{MR2410393}, which states that the minimal dimension of a non-trivial representation of a non-cyclic finite simple group tends to infinity as the size of the group goes to infinity.
Moreover, $G$ is amenable since it is locally finite.

This applies, for instance, to the finite alternating group of the integers $A(\N)$, which is the subgroup of the finite symmetric group of the integers
\begin{equation*}
S(\N) = \{ \sigma : \N \to \N \,:\, \sigma \textrm{ is a bijection and } \{ n \in \N \,:\, \sigma(n) \ne n \} \textrm{ is finite} \}
\end{equation*}
consisting of the even permutations.
Indeed, $A(\N)$ is the union $\cup A_n$ where $A_n$ is the alternating group on $n$ points.
Now $[S(\N):A(\N)] = 2$, so $S(\N)$ is a virtually WM group.
On the other hand, $\phi(\sigma) = (-1)^{\mathrm{sgn}(\sigma)}$ defines a non-trivial representation of $S(\N)$, so $S(\N)$ is not a WM group.

This also applies to the projective linear group $\PSL_{n}(F)$ over an infinite algebraic field $F$ of finite characteristic for any $n\geq 2$.
Indeed, $F$ can be written as an increasing union of finite subfields $F^{(k)}$, and $\PSL_{n}(F)$ is the increasing union of copies of $\PSL_{n}(F^{(k)})$.
It is a classic result that $\PSL_{n}(F^{(k)})$ is simple unless $n=2$ and $|F^{(k)}|=2,3$.
\end{example}

\begin{example}
\label{ex:wm:loc-finite}
Recall that a group is called \define{periodic} if each of its elements has finite order.
By \cite[Theorem 2]{MR0003420} a periodic group admits a faithful finite-dimensional representation over a field of characteristic zero if and only if it has an abelian finite index normal subgroup that admits a faithful finite-dimensional representation.
It follows that any countably infinite periodic simple group $G$ is WM.
Indeed, suppose that $\pi$ is a non-trivial finite-dimensional representation of $G$.
Since $G$ is simple and $\pi$ is non-trivial, $\pi$ is faithful.
By the above characterization of faithfully representable groups $G$ admits a finite index normal abelian subgroup, which is a contradiction.

In particular this shows that countably infinite, locally finite, simple groups are WM.
Local finiteness is a stronger condition than periodicity, and it ensures amenability.
\end{example}

\begin{example}
\label{ex:wm:fin-gen}
Recall that a group is called \define{residually finite} if its points are separated by the homomorphisms into finite groups.
By \cite[Theorem 7]{MR0003420} any finitely generated group that admits a faithful representation over a field of characteristic zero is residually finite.
It follows that any countably infinite, finitely generated, simple group $G$ is WM.
Indeed, supppose that $G$ is not WM.
Since $G$ is simple, this implies that $G$ has a faithful finite-dimensional representation.
By the result cited above this forces $G$ to be residually finite, which contradicts simplicity.

It has been recently shown that there exist (many) countably infinite, finitely generated, simple, amenable groups, see \cite[Corollary B]{MR3071509}.
\end{example}

It follows from Theorem~\ref{thm:apCharacterization} that the subset
\begin{equation*}
\{ g \in G : f(g) = f(\idG) \textrm{ for all } f \in \ap(G) \}
\end{equation*}
of $G$, sometimes called the \define{von Neumann kernel} of $G$, is precisely $G_0$.
In fact, $G_{0}$ is the kernel of the almost periodic compactification $\iota$ of $G$.
Note, however, that $G_0$ need not be a WM group.

\begin{example}
Let
\begin{equation*}
G = \left\{ \begin{pmatrix} u & v\\0 & 1 \end{pmatrix} : u,v \in \mathbb{Q}, u \ne 0 \right\}
\end{equation*}
be the affine group of $\mathbb{Q}$.
It is shown in \cite{MR0002891} that
\begin{equation*}
G_0 = \left\{ \begin{pmatrix} 1 & v\\0 & 1 \end{pmatrix} : v \in \mathbb{Q} \right\}.
\end{equation*}
Clearly $G_0$, which is isomorphic to $(\mathbb{Q},+)$, is not WM.
\end{example}

The \lcsc{} groups $G$ can be classified according to the properties of their almost periodic compactification $\iota : G \to \bar{G}$ as follows.
\begin{enumerate}
\item The almost periodic compactification $\iota : G \to \bar{G}$ is not a surjective map.
Equivalently, $G/G_{0}$ is not compact.
Groups with this property are the subject of Theorem~\ref{thm:straus-set}.
In this case $G$ is not WM-by-compact, since for any WM subgroup $H\leq G$ we have $H\leq G_{0}$, and if $G/H$ is compact, then $G/G_{0}$ would also be compact.

\item The almost periodic compactification $\iota : G \to \bar{G}$ is surjective.
In this case our knowledge depends on the cardinality of $\bar{G}$.
\begin{enumerate}
\item $\bar{G}$ consists of one point, or equivalently $G=G_{0}$, or $G$ is WM.
In this case we have Theorem~\ref{mainTheorem:WM:lcsc}.
\item $\bar{G}$ consists of more than one point but is finite, or equivalently $1 < [G:G_{0}] < \infty$.
In this case $G_{0}$ is WM, since any non-trivial finite-dimensional representation of $G_{0}$ would induce a finite-dimensional representation of $G$ that does not vanish identically on $G_{0}$.
Corollary~\ref{mainTheorem:virtuallyWM:lcsc} holds.
\item $\bar{G}$ is infinite.
In this case $G_{0}$ need not be WM and, more generally, $G$ need not be WM-by-compact (see Example~\ref{ex:klaus-schmidt}).
Theorem~\ref{thm:hindman-G0-cocompact} holds.
Note that this case cannot occur for discrete $G$.
\end{enumerate}
\end{enumerate}

\begin{example}[K.~Schmidt]
\label{ex:klaus-schmidt}
The von Neumann kernel of a connected group admits an explicit description, see \cite{MR2918898}.
A special case of this description shows that the von Neumann kernel of a connected Lie group is the closed commutator subgroup (this can be seen as a version of Lie--Kolchin theorem, cf.~\cite[Theorem 3]{MR2394977}).
Thus the von Neumann kernel $G_{0}$ of the semidirect product $G=\SO(2) \ltimes \R^{2}$ (taken with respect to the defining action of $\SO(2)$ on $\R^{2}$) equals $\R^{2}$.
This shows that $G_{0}$ need not be WM when $G/G_{0}$ is compact in the case that $G$ is not discrete.
\end{example}

In order to prove Theorem~\ref{thm:straus-set} we will need a supply of almost periodic functions that vary sufficiently slowly.
\begin{lemma}
\label{lem:slowly-varying-ap}
Let $G$ be a locally compact, second countable, amenable group such that the almost periodic compactification $\iota : G \to \bar{G}$ is not surjective.
Then for every $\epsilon>0$ and every compact set $K$ in $G$ there exists an almost periodic function $f : G \to [0,1]$ such that $\|\lmult_{g}f-f\|_{\infty} < \epsilon$ for every $g\in K$ and the range of $f$ is $\epsilon$-dense in $[0,1]$.
\end{lemma}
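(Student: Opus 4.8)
The plan is to build the desired function as a pullback of a continuous function on the Bohr compactification $\bar G$, exploiting the hypothesis that $\iota : G \to \bar G$ is not surjective. Since $\iota$ is not surjective, $\overline{\iota(G)}$ is a proper closed subgroup of the compact group $\bar G$; pick a point $x_0 \in \bar G \setminus \overline{\iota(G)}$. The first step is to produce, on $\bar G$, a continuous function $F : \bar G \to [0,1]$ whose restriction to $\overline{\iota(G)}$ varies slowly along the image of $K$ but whose full range on $\bar G$ is $\epsilon$-dense in $[0,1]$. The natural device is to use the point $x_0$: the map $y \mapsto d(y\overline{\iota(G)}, x_0\overline{\iota(G)})$, or more simply a function that depends only on the coset of $y$ in the (compact Hausdorff, hence metrizable) homogeneous space $\bar G/\overline{\iota(G)}$, is constant on $\overline{\iota(G)}$ and therefore pulls back to a constant on $G$. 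That is too strong a collapse; instead I would take a function that is a small perturbation of such a coset function — constant (hence perfectly $K$-invariant) on $\overline{\iota(G)}$, but genuinely attaining many values elsewhere on $\bar G$.

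More concretely, here is the construction I would carry out. Because $\bar G$ is compact and second countable it is metrizable; fix a compatible metric $\rho$. Let $c = \rho(x_0, \overline{\iota(G)}) > 0$. Since $\iota(K)$ is a compact subset of $\overline{\iota(G)}$ and $x_0 \notin \overline{\iota(G)}$, right translation by $\iota(g)$ for $g \in K$ moves points of $\overline{\iota(G)}$ within $\overline{\iota(G)}$, so on $\overline{\iota(G)}$ any function of the form $y \mapsto \varphi(\rho(y, x_0))$ is automatically invariant under these translations only if $\varphi$ is constant on a neighborhood — this is again too rigid, so the cleaner route is: choose a continuous $\psi : \bar G \to [0,1]$ with $\psi \equiv 0$ on $\overline{\iota(G)}$ and $\psi(x_0) = 1$ (Urysohn's lemma, using that $\overline{\iota(G)}$ and $\{x_0\}$ are disjoint closed sets), then symmetrize over the (compact) stabilizer-free situation and average $\psi$ over small translates to make its $\overline{\iota(G)}$-behavior flat while keeping a point where it is close to $1$. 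Then the range of $\psi$ on $\bar G$ is an interval containing $0$ and a value near $1$, hence after composing with a suitable piecewise-linear homeomorphism of $[0,1]$ one makes the range exactly $\epsilon$-dense in $[0,1]$; and since $\psi$ vanishes identically on $\overline{\iota(G)} \supset \iota(G)$, its pullback $f = \psi \circ \iota$ would be identically zero, which again is wrong. The correct fix, and the one I would ultimately use, is to interpolate: take $f = \psi \circ \iota + (1-\psi\circ\iota)\cdot(\text{small }K\text{-invariant bump})$ — no; rather, simply replace $\overline{\iota(G)}$ above by a slightly larger proper closed subset that still omits enough room, or observe directly that $\iota(G)$ is dense in $\overline{\iota(G)}$ and construct $F$ on $\bar G$ so that $F|_{\overline{\iota(G)}}$ is a nonconstant slowly-varying function with $\epsilon$-dense range while the $K$-translation invariance comes for free from the group structure of $\overline{\iota(G)}$, which is compact.

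The honest streamlined argument: since $\overline{\iota(G)}$ is a compact second-countable group with $\iota(G)$ dense in it, it suffices to produce on $\overline{\iota(G)}$ a continuous $F : \overline{\iota(G)} \to [0,1]$ with $\epsilon$-dense range and $\|\ell_g F - F\|_\infty < \epsilon$ for $g \in \iota(K)$, and then set $f = F \circ \iota$; the point $x_0 \notin \overline{\iota(G)}$ and the non-surjectivity of $\iota$ are not actually what is needed here. Rather, what is needed is that $\overline{\iota(G)}$ be an \emph{infinite} compact group (equivalently $G/G_0$ infinite, i.e. $G$ not virtually WM in the relevant sense) — and this is exactly the content of non-surjectivity failing to reduce to a finite quotient; one should check whether the lemma as stated only needs $\bar G$ infinite, which non-surjectivity of $\iota$ does not by itself guarantee. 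Assuming $\bar G$ (equivalently $\overline{\iota(G)}$) is infinite, the main step is: in an infinite compact metrizable group, for any compact $K_0$ (here $\iota(K)$) and any $\epsilon > 0$ there is a continuous $[0,1]$-valued function that is $\epsilon$-invariant under left translation by $K_0$ yet has $\epsilon$-dense range. This one builds by a standard averaging/convolution construction: start with any continuous function whose range is not contained in an $\epsilon$-ball (exists because the group is infinite and hence has a nontrivial continuous function with large oscillation by the Peter–Weyl theorem, e.g. a matrix coefficient), then convolve with a probability density supported in a small enough neighborhood $V$ of the identity so that $K_0 V K_0^{-1}$-translates move it by less than $\epsilon$; convolution only shrinks oscillation in a controlled way, and by choosing the original oscillation large and $V$ small one retains $\epsilon$-dense range (rescaling the target to $[0,1]$ and composing with a piecewise-linear map to make the range exactly $\epsilon$-dense).

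\textbf{Main obstacle.} The genuine difficulty is the first step of the last construction: producing a single continuous function on the compact group $\overline{\iota(G)}$ whose range, even after smoothing by convolution with an approximate identity, remains $\epsilon$-dense in an interval — i.e. controlling that the regularization does not collapse the range. I would handle this by choosing the neighborhood $V$ over which we average \emph{after} fixing a function $h$ (a finite sum of matrix coefficients, via Theorem~\ref{thm:apCharacterization} applied to the compact group) whose range already overshoots $[0,1]$ with room to spare, using uniform continuity of $h$ to keep $h \star \mu_V$ within $\epsilon/4$ of $h$ in sup norm while the support $V$ is also small enough that $\|\ell_g(h\star\mu_V) - h\star\mu_V\|_\infty = \|(\ell_g h)\star\mu_V - h \star \mu_V\|_\infty$ stays below $\epsilon$ for $g$ in the compact set $\iota(K)$. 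The rest is bookkeeping: truncate into $[0,1]$, compose with an explicit piecewise-linear surjection of an interval onto a suitable $\epsilon$-net, and pull back along $\iota$. I would also double-check at the outset that non-surjectivity of $\iota$ indeed supplies the needed infinitude of $\bar G$ in the intended application (Theorem~\ref{thm:straus-set}), since if $\bar G$ were finite the conclusion would be impossible for small $\epsilon$; this is presumably where the hypothesis is really used, and the lemma may implicitly assume $G/G_0$ infinite through its use context.
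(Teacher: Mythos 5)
Your proposal does not go through, and the failure is located exactly at the hypothesis you decided to discard. First, a factual error at the outset: $\overline{\iota(G)}$ is never a proper closed subgroup of $\bar{G}$ --- the almost periodic compactification has dense image by construction, so $\overline{\iota(G)} = \bar{G}$ always. Non-surjectivity means $\iota(G)$ is a proper \emph{dense} subgroup, hence not closed, hence not compact; that non-compactness is the entire point. Second, and more seriously, your ``main step'' --- that in an infinite compact metrizable group every compact set $K_0$ admits a continuous $[0,1]$-valued function that is $\epsilon$-invariant under left translation by $K_0$ and has $\epsilon$-dense range --- is false: if $\|\lmult_g F - F\|_\infty \le \epsilon$ for all $g$ in the whole group, then for any $x,y$ one has $|F(y) - F(x)| = |F(g^{-1}x) - F(x)| \le \epsilon$ with $g = xy^{-1}$, so the range has diameter at most $\epsilon$. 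This is not a hypothetical worry: for $G = \SO(2)\ltimes\R^2$ (Example~\ref{ex:klaus-schmidt}) the map $\iota$ is surjective onto the infinite compact group $\SO(2)$, and for $K$ a compact subset of $G$ mapping onto $\SO(2)$ the conclusion of the lemma is simply false. So infinitude of $\bar{G}$ is not the operative hypothesis, and your closing concern points the wrong way: non-surjectivity does imply $\bar{G}$ is infinite, but the converse fails and the converse is what your argument would need. Relatedly, your mechanism for producing invariance --- convolving with an approximate identity supported near the identity of $\bar{G}$ --- only yields a function uniformly close to the original; it does nothing to create invariance under translation by elements of $\iota(K)$ that are far from the identity.

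What the paper does, and what your outline is missing, is to couple the two requirements through a \folner{} set of $G$. By amenability choose a nonnull compact $F \subset G$ with $\haar(F \symdiff gF)/\haar(F) < \epsilon$ for all $g \in K$. Because $\iota(G)$ is a proper dense subgroup of the compact group $\bar{G}$, it is not compact, and hence cannot be covered by finitely many translates of the compact set $\iota(F^{-1})$: a maximal pairwise disjoint family $\iota(F^{-1}g_1),\dots,\iota(F^{-1}g_m)$ would force $\iota(G) \subseteq \bigcup_i \iota(FF^{-1}g_i)$, which is compact, hence closed, hence equal to $\bar{G}$ by density, contradicting properness. One therefore finds $g_0,\dots,g_{\lfloor 1/\epsilon\rfloor}$ with the sets $\iota(F^{-1}g_i)$ pairwise disjoint, takes a Urysohn function $\tilde f : \bar{G} \to [0,1]$ equal to $i\epsilon$ on $\iota(F^{-1}g_i)$, and averages: $f = \haar(F)^{-1}\int_F \lmult_g \tilde f \intd\haar(g)$. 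The average still equals $i\epsilon$ at each $\iota(g_i)$ (every $h^{-1}g_i$ with $h \in F$ lands in $F^{-1}g_i$), so the range is $\epsilon$-dense, while the \folner{} property of $F$ gives $\|\lmult_g f - f\|_\infty < \epsilon$ for $g \in K$. The disjointness of many translates of $\iota(F^{-1})$ --- which is precisely where non-surjectivity enters --- is what lets an averaged, hence $K$-invariant, function retain a large range; no construction internal to the compact group $\bar{G}$ that ignores how $\iota(G)$ sits inside it can reproduce this.
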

\begin{proof}
Since $G$ is amenable, it has a left \folner{} sequence, so there exists a nonnull compact set $F\subset G$ such that $\haar(F \symdiff gF)/\haar(F) < \epsilon$ for every $g\in K$.
In particular, $\iota(F^{-1}) \subset \iota(G)$ is a compact subset.
On the other hand, by the assumption that $\iota$ is not surjective, $\iota(G)$ is a proper dense subgroup of the compact group $\bar{G}$, so it is not compact.
 
Therefore there exist $g_{i}\in G$, $i = 0,\dots, \lfloor 1/\epsilon \rfloor$, such that the sets $\iota(F^{-1}g_{i})$ are pairwise disjoint.
Let $\tilde f : \bar{G} \to [0,1]$ be a continuous function that equals $i\epsilon$ on $\iota(F^{-1}g_{i})$.
(Such a function can be constructed using the Urysohn lemma.)

Consider now the continuous function $f=\haar(F)^{-1} \int_{F} \lmult_{g}\tilde f \intd\haar(g)$ on $\bar{G}$.
Then $f(g_{i})=i\epsilon$, so that $f(G)$ is $\epsilon$-dense in $[0,1]$.
Moreover, it follows from the \folner{} condition that $\|\lmult_{g}f-f\|_{\infty}<\epsilon$ for every $g\in K$.
\end{proof}

\subsection{Topological dynamics}
We now recall some facts from topological dynamics.
A \define{topological dynamical system} $(X,G)$ is a compact metric space $(X,d)$ together with a jointly continuous left action $(g,x) \mapsto gx$ of $G$ on $(X,d)$.
We say that $(X,G)$ is \define{topologically transitive} if there is some $x \in X$ with dense orbit and \define{minimal} if every point in $X$ has dense orbit.
A system $(X,G)$ is \define{equicontinuous} if the collection of homeomorphisms $x\mapsto gx$, $g\in G$, is equicontinuous.
A Borel measure $\mu$ on $X$ is \define{invariant} with respect to a left action of $G$ if, for all Borel sets $A \subset X$ and all $g \in G$, one has $\mu(g A) = \mu(A)$.
A version of the Bogolioubov--Krylov theorem (see \cite[Corollary 6.9.1]{MR648108}) for amenable groups guarantees that if $G$ is amenable, then any topological dynamical system $(X,G)$ admits an invariant Borel probability measure.
The system $(X,G)$ is said to be \define{uniquely ergodic} if there is only one invariant Borel probability measure on $X$.
The \define{support} of a Borel probability measure on a compact metric space $(X,d)$ is the intersection of all closed sets with full measure.
Since $X$ is second countable, the support has full measure.
Given a topological dynamical system $(X,G)$, a point $x \in X$, and a subset $A$ of $X$, write $\ret_A(x) = \{ g \in G \,:\, gx \in A \}$.
Sets of the form $\ret_A(x)$ are called \define{return time sets}.

\begin{theorem}[{\cite[Theorem 7]{MR0033342}}]
\label{uniqueErgodicitySegal}
Let $(X,G)$ be a topological dynamical system that is topologically transitive and equicontinuous.
Then $(X,G)$ is minimal and uniquely ergodic.
If $X$ is infinite and $\mu$ is the unique $G$-invariant probability measure, then $\mu$ is non-atomic.
\end{theorem}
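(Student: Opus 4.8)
The plan is to realize $X$ as a transitive, continuous action of a compact group and to read off all three conclusions from there. The role of equicontinuity is to supply a $G$-invariant compatible metric: set $\rho(x,y) = \sup_{g \in G} d(gx,gy)$. This is finite, being bounded by the $d$-diameter of the compact space $X$; it satisfies $\rho \geq d$, so it induces at least the original topology; and equicontinuity gives, for each $\epsilon>0$, a $\delta>0$ with $d(x,y)<\delta \implies \rho(x,y) \leq \epsilon$, so $\rho$ induces exactly the topology of $X$. By construction $G$ acts on $(X,\rho)$ by isometries. Minimality is then immediate: if $x$ has dense orbit and $y,z \in X$, choose $g_1,g_2 \in G$ with $\rho(g_1 x, y)$ and $\rho(g_2 x, z)$ small; then $\rho\big((g_2 g_1^{-1})y, z\big) \leq \rho\big((g_2 g_1^{-1})y, (g_2 g_1^{-1})g_1 x\big) + \rho(g_2 x, z) = \rho(y, g_1 x) + \rho(g_2 x, z)$ is small, using that $g_2 g_1^{-1}$ is a $\rho$-isometry and $(g_2 g_1^{-1})g_1 = g_2$. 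Hence $\overline{Gy} = X$ for every $y \in X$.

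Next I would pass to the enveloping group. Isometries of the compact metric space $(X,\rho)$ are $1$-Lipschitz, hence equicontinuous, so by the Arzel\`a--Ascoli theorem the group $\mathrm{Iso}(X,\rho)$, equipped with the topology of uniform convergence, is a compact topological group (composition and inversion are easily checked to be continuous in this topology). Let $\Gamma$ be the closure in $\mathrm{Iso}(X,\rho)$ of the image of $G$; a closed subsemigroup of a compact group is a subgroup, so $\Gamma$ is a compact group, and the action $\Gamma \times X \to X$ is jointly continuous. For $x \in X$ the set $\Gamma x$ contains the dense set $Gx$ and is closed, being a continuous image of the compact set $\Gamma$; hence $\Gamma x = X$, i.e.\ $\Gamma$ acts transitively. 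Fixing $x_0$ and pushing forward the normalized Haar measure $\eta$ of $\Gamma$ along $g \mapsto g x_0$ gives a $\Gamma$-invariant, in particular $G$-invariant, Borel probability measure $\mu$ on $X$. (When $G$ is amenable one could instead invoke the Bogolioubov--Krylov theorem for existence, but the enveloping group is needed for uniqueness in any case.)

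For unique ergodicity, let $\nu$ be any $G$-invariant Borel probability on $X$. Since for $f \in \cont(X)$ the function $g \mapsto \int_X f(gx)\intd\nu(x)$ is continuous on $\Gamma$ and constant on the dense subgroup $G$, it is constant on $\Gamma$; that is, $\nu$ is $\Gamma$-invariant. Now for $f \in \cont(X)$ the function $x \mapsto \int_\Gamma f(gx)\intd\eta(g)$ is continuous and $\Gamma$-invariant, hence constant by transitivity; integrating this constant against $\nu$, and using Fubini together with the $\Gamma$-invariance of $\nu$, returns $\int_X f\intd\nu$, whereas the same computation with $\mu$ in place of $\nu$ returns $\int_X f\intd\mu$. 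Therefore $\int_X f\intd\nu = \int_X f\intd\mu$ for all $f \in \cont(X)$, so $\nu = \mu$, and $(X,G)$ is uniquely ergodic.

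Finally, suppose $X$ is infinite and $\mu(\{x_0\}) = a > 0$ for some $x_0 \in X$. By invariance $\mu(\{gx_0\}) = \mu(g\{x_0\}) = a$ for every $g \in G$, so every point of the orbit $Gx_0$ carries mass $a$. By minimality $Gx_0$ is dense, and since a finite subset of $X$ is closed it cannot be dense in the infinite space $X$; hence $Gx_0$ is infinite, and it contains infinitely many pairwise disjoint singletons of mass $a > 0$, contradicting $\mu(X) = 1$. So $\mu$ is non-atomic. The one genuinely substantive point in this argument is the passage through the enveloping compact group $\Gamma$ — establishing that it is a compact topological group acting transitively and continuously on $X$; everything else (the invariant metric, minimality, the averaging argument for uniqueness, and the atom count) is routine once $X$ is presented as a homogeneous space of a compact group.
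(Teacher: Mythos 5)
Your argument is correct. Note that the paper does not prove this statement at all --- it is quoted from the cited reference --- so there is nothing internal to compare against; your proof is the standard one: replace $d$ by the invariant metric $\rho(x,y)=\sup_g d(gx,gy)$, pass to the compact enveloping group $\Gamma=\overline{G}\subset\mathrm{Iso}(X,\rho)$, realize $X$ as a homogeneous space of $\Gamma$, and average against Haar measure for unique ergodicity. The only step you leave implicit is that $\mathrm{Iso}(X,\rho)$ is \emph{closed} in $\cont(X,X)$, which requires the standard fact that an isometric self-embedding of a compact metric space is surjective; with that remark the proof is complete.
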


\begin{lemma}
\label{densityIsMeasure}
Let $(X,G)$ be a topological dynamical system.
If $(X,G)$ is uniquely ergodic with invariant Borel probability measure $\mu$, $A$ is a Borel set with $\mu(\partial A) = 0$, $x$ is a point in $X$, and $\Phi$ is any left Reiter sequence, then $\dens_\Phi(\ret_A(x)) = \mu(A)$.
\begin{proof}
Since $(X,G)$ is uniquely ergodic, for every $f$ in $\cont(X)$ and every $x \in X$ we have
\begin{equation*}
\lim_{N \to \infty} \int \Phi_N(g) f(gx) \intd\haar(g) = \int f \intd\mu.
\end{equation*}
Fix $\epsilon > 0$.
Since $\mu(\partial A) = 0$ we can find $f_1,f_2$ in $\cont(X)$ so that $f_1 \le \ind{A} \le f_2$ and $\int f_2 - f_1 \intd\mu < \epsilon$.
Thus
\begin{multline*}
\mu(A) - \epsilon \le \int f_1 \intd\mu
= \lim_{N \to \infty} \int \Phi_N(g) f_1(gx) \intd\haar(g)\\
\le \liminf_{N \to \infty} \int \Phi_N(g) \ind{A}(gx) \intd\haar(g)
\le \limsup_{N \to \infty} \int \Phi_N(g) \ind{A}(gx) \intd\haar(g)\\
\le \lim_{N \to \infty} \int \Phi_N(g) f_2(gx) \intd\haar(g)
= \int f_2 \intd\mu \le \mu(A) + \epsilon,
\end{multline*}
giving $\dens_\Phi(\ret_A(x)) = \mu(A)$ as desired.
\end{proof}
\end{lemma}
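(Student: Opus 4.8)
\emph{Proof proposal.} The plan is to reduce the statement to the Reiter-sequence version of the classical fact that, in a uniquely ergodic system, the ergodic averages of a continuous function converge pointwise to its integral against the unique invariant measure, and then to sandwich $\ind{A}$ between two continuous functions, which is possible precisely because $\mu(\partial A)=0$.

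First I would establish that for every $f\in\cont(X)$ and every $x\in X$,
\begin{equation*}
\lim_{N\to\infty}\int\Phi_N(g)f(gx)\intd\haar(g)=\int f\intd\mu .
\end{equation*}
Since $g\mapsto f(gx)$ is bounded and continuous on $G$ and each $\Phi_N\in\lp^1(G,\haar)$ is non-negative with integral $1$, the maps $L_N\colon f\mapsto\int\Phi_N(g)f(gx)\intd\haar(g)$ are positive, unital linear functionals on $\cont(X)$, hence Borel probability measures on the compact metric space $X$. By weak-$*$ compactness of the space of probability measures, $(L_N)$ has limit points, and I claim every limit point $\nu$ is $G$-invariant: substituting $g\mapsto h^{-1}g$ and using left-invariance of $\haar$ gives $\int\Phi_N(g)f(hgx)\intd\haar(g)=\int(\lmult_h\Phi_N)(g)f(gx)\intd\haar(g)$, which differs from $L_N(f)$ by at most $\|f\|_\infty\,\|\lmult_h\Phi_N-\Phi_N\|_1$, and this tends to $0$ by the left Reiter property; passing to a subsequence converging to $\nu$ shows $\int f(hy)\intd\nu(y)=\int f(y)\intd\nu(y)$ for all $h$ and $f$. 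By unique ergodicity $\nu=\mu$, so $\mu$ is the only limit point and $L_N\to\mu$ weak-$*$.

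Next I would fix $\epsilon>0$ and use regularity of Borel probability measures on a compact metric space together with $\mu(\partial A)=0$ (which forces $\mu(\mathrm{int}(A))=\mu(\overline{A})=\mu(A)$) to choose an open set $U\supset\overline{A}$ with $\mu(U)<\mu(A)+\epsilon$ and a compact set $C\subset\mathrm{int}(A)$ with $\mu(C)>\mu(A)-\epsilon$; Urysohn's lemma then yields $f_1,f_2\in\cont(X)$ with $\ind{C}\le f_1\le\ind{\mathrm{int}(A)}\le\ind{A}\le\ind{\overline{A}}\le f_2\le\ind{U}$. Since $g\mapsto gx$ is continuous, $\ret_A(x)$ is Borel and $\ind{A}(gx)=\ind{\ret_A(x)}(g)$, so applying the first step to $f_1$ and $f_2$ and sandwiching gives
\begin{equation*}
\mu(A)-\epsilon<\int f_1\intd\mu\le\lowerdens_\Phi(\ret_A(x))\le\upperdens_\Phi(\ret_A(x))\le\int f_2\intd\mu<\mu(A)+\epsilon .
\end{equation*}
Letting $\epsilon\to 0$ yields $\dens_\Phi(\ret_A(x))=\mu(A)$. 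The step requiring genuine care is the first one: the passage from the Reiter condition to $G$-invariance of weak-$*$ limit points, and in particular making sure the \emph{left} Reiter property is the one compatible with the left $G$-action on $X$; the remainder is routine regularity of measures and an application of Urysohn's lemma.
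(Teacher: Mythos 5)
Your proof is correct and follows essentially the same route as the paper: the unique-ergodicity convergence of Reiter averages of continuous functions, followed by sandwiching $\ind{A}$ between continuous functions using $\mu(\partial A)=0$. The only difference is that you supply proofs of the two ingredients the paper takes for granted, namely the weak-$*$ limit-point argument (with the left Reiter property giving invariance of limit measures) and the explicit Urysohn construction of $f_1,f_2$.
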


\begin{lemma}
\label{lem:smallZeroBoundaryBalls}
Let $(X,d)$ be a compact metric space with a non-atomic probability measure $\mu$.
For any $\epsilon > 0$ and any point $x$ in the support of $\mu$ there is an open set $A$ containing $x$ such that $\mu(A) < \epsilon$ and $\mu(\partial A) = 0$.
\begin{proof}
Let $x$ be a point in the support of $\mu$.
For every $t > 0$ the open ball $B_t$ centered at $x$ with radius $t$ has positive measure.
Their boundaries $\partial B_t$ are disjoint, so only countably many of the sets $\partial B_t$ have positive measure.
Let $t_n$ be a sequence decreasing to 0 such that $\mu(\partial B_{t_n}) = 0$ for all $n$.
We have $\mu(B_{t_n}) \to \mu(\{ x \}) = 0$ because $\mu$ is non-atomic.
Put $A = B_{t_n}$ with $n$ so large that $\mu(B_{t_n}) < \epsilon$.
\end{proof}
\end{lemma}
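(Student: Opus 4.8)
The plan is to exhibit $A$ as a sufficiently small metric ball centered at $x$, chosen so that its bounding sphere is $\mu$-null. First I would consider, for each radius $t > 0$, the open ball $B_t = \{ y \in X : d(x,y) < t \}$; since $B_t$ is open and its closure is contained in $\{ y : d(x,y) \le t \}$, the topological boundary $\partial B_t$ is contained in the sphere $\{ y : d(x,y) = t \}$, and these spheres are pairwise disjoint for distinct values of $t$.

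The key observation is that a finite measure can assign positive mass to only countably many members of a pairwise disjoint family: for each $k \in \N$ the set of radii $t$ with $\mu(\partial B_t) > 1/k$ is finite, since otherwise $\mu(X)$ would be infinite, so only countably many radii satisfy $\mu(\partial B_t) > 0$. Hence I can pick a sequence $t_n$ decreasing to $0$ with $\mu(\partial B_{t_n}) = 0$ for every $n$.

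Finally I would invoke continuity of $\mu$ from above, which is legitimate because $\mu(B_{t_1}) \le 1 < \infty$: one has $\mu(B_{t_n}) \to \mu\big( \bigcap_n B_{t_n} \big)$, and since $t_n \to 0$ the intersection $\bigcap_n B_{t_n}$ equals $\{ x \}$, which has measure $0$ because $\mu$ is non-atomic. Thus $\mu(B_{t_n}) < \epsilon$ for all large $n$; fixing such an $n$ and putting $A = B_{t_n}$ yields an open set containing $x$ with $\mu(A) < \epsilon$ and $\mu(\partial A) = 0$. I do not anticipate any genuine obstacle here: the only step needing a moment's care is the countability argument for null spheres, and in fact the hypothesis that $x$ lie in the support of $\mu$ is not used for this statement — the argument above applies to every $x \in X$ — so it is presumably stated that way only because that is the form in which the lemma is later applied.
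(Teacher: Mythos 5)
Your proof is correct and follows essentially the same route as the paper's: disjointness of the spheres forces all but countably many radii to give $\mu$-null boundaries, and continuity from above together with non-atomicity shrinks the measure of the ball below $\epsilon$. Your side remark is also accurate — the support hypothesis is not needed for this lemma itself, only to guarantee $\mu(A)>0$ in the later applications.
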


\subsection{Sets of measurable recurrence}
In this subsection we discuss sets of measurable recurrence in \lcsc{} groups.
Note that the joint measurability condition in Definition~\ref{def:meas-action} implies (see \cite[22.20(b)]{MR551496}, for example) that the induced action of $G$ on $\lp^2(X,\mathscr{B},\mu)$ is strongly continuous.
We begin by pointing out that, when $G$ is countable and infinite, Definition~\ref{def:SetRec} coincides with the usual definition.

\begin{proposition}
Let $G$ be a countable, infinite group.
Then a subset $R$ of $G$ is a set of measurable recurrence if and only if, for every measurable action of $G$ on a separable probability space $(X,\mathscr{B},\mu)$ and every $B$ in $\mathscr{B}$ with positive measure, we can find $r$ in $R \setminus \{ \idG \}$ such that $\mu(B \cap T^r B) > 0$.
\end{proposition}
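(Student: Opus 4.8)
The plan is to prove the two implications separately. The forward implication — that the Definition~\ref{def:SetRec} notion implies the one stated here — is immediate: a countable group carries the discrete topology, so its compact subsets are precisely its finite subsets, and taking $K=\{\idG\}$ in Definition~\ref{def:SetRec} already yields the displayed conclusion. Thus the content lies in the reverse implication. Assume that for every measurable action $T$ of $G$ on a separable probability space $(X,\mathscr{B},\mu)$ and every $B\in\mathscr{B}$ with $\mu(B)>0$ there is some $r\in R\setminus\{\idG\}$ with $\mu(B\cap T^rB)>0$; we must show that for every finite $K\subset G$, every measurable action $T$ on $(X,\mathscr{B},\mu)$, and every $A\in\mathscr{B}$ with $\mu(A)>0$ there is $r\in R\setminus K$ with $\mu(A\cap T^rA)>0$. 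Enlarging $K$ if necessary, we may assume $\idG\in K$, and we write $F=K\setminus\{\idG\}$, a finite set of non-identity elements.

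The key point is an auxiliary construction: there is a measure-preserving action $(Z,\rho,U)$ of $G$ on a standard probability space, together with a Borel set $D\subseteq Z$, such that $\rho(D)>0$ while $\rho(D\cap U^fD)=0$ for every $f\in F$. Granting this, consider the product system $X\times Z$ with the diagonal action $T\times U$ and the measure $\mu\times\rho$; this is again a measurable action on a separable probability space. Applying the hypothesis to the positive-measure set $A\times D$ produces $r\in R\setminus\{\idG\}$ with $(\mu\times\rho)\big((A\times D)\cap(T\times U)^r(A\times D)\big)>0$. Since that set equals $(A\cap T^rA)\times(D\cap U^rD)$, we get both $\mu(A\cap T^rA)>0$ and $\rho(D\cap U^rD)>0$. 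The second inequality forces $r\notin F$, and since $r\neq\idG$ as well, we conclude $r\notin K$. Hence $r\in R\setminus K$ and $\mu(A\cap T^rA)>0$, which is what we wanted.

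It remains to build the auxiliary pair $(Z,D)$, which can be done one coordinate at a time. For each $f\in F$, put $H_f=\langle f\rangle$ and choose a finite $H_f$-set $W_f$ with a point $w_f\in W_f$ satisfying $f\cdot w_f\neq w_f$: when $H_f$ is finite, take $W_f=H_f$ under left translation with $w_f=\idG$; when $H_f$ is infinite, let $H_f\cong\Z$ act on $W_f=\Z/3\Z$ through a surjection sending a generator to $\bar 1$, with $w_f=\bar 0$. Co-inducing this $H_f$-action up to $G$ yields a measure-preserving $G$-action $(Z_f,\rho_f,U_f)$ on a standard probability space with an $H_f$-equivariant evaluation map $\mathrm{ev}\colon Z_f\to W_f$ pushing $\rho_f$ forward to the uniform measure on $W_f$. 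Then $D_f:=\mathrm{ev}^{-1}(\{w_f\})$ has $\rho_f(D_f)=|W_f|^{-1}>0$, and since $f$ lies in the inducing subgroup $H_f$, equivariance gives $U_f^fD_f=\mathrm{ev}^{-1}(\{f\cdot w_f\})$, whence $D_f\cap U_f^fD_f=\emptyset$. Finally let $Z=\prod_{f\in F}Z_f$ with the product measure and the diagonal $G$-action, and $D=\prod_{f\in F}D_f$; then $\rho(D)=\prod_{f\in F}|W_f|^{-1}>0$, while for each $f\in F$ the set $D\cap U^fD$ projects into $D_f\cap U_f^fD_f=\emptyset$ in its $f$-th coordinate and is therefore empty. (Alternatively: pass to any free measure-preserving action of $G$, such as the Bernoulli shift on $[0,1]^G$ with product Lebesgue measure, where the graph joining $x$ to $U^fx$ for $f\in(F\cup F^{-1})\setminus\{\idG\}$ is Borel of degree at most $2|F|$; the Kechris--Solecki--Todorcevic bound then gives a Borel proper colouring with at most $2|F|+1$ colours, and one takes $D$ to be a colour class of largest measure.)

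The only genuine obstacle is exactly this auxiliary construction: the finitely many ``forbidden'' elements of $K$ must be excluded, and this cannot be arranged inside the given system — if some $f\in F$ acts trivially on $X$ then $A\cap T^fA=A$ — so one must manufacture a separate system in which every $f\in F$ genuinely displaces $D$ from itself. The rest (the reduction to $\idG\in K$, the product-system bookkeeping, and the single invocation of the one-element recurrence hypothesis) is routine.
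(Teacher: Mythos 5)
Your proof is correct, and its overall architecture is the same as the paper's: reduce to the case $\idG\in K$, manufacture an auxiliary measure-preserving $G$-system with a positive-measure set $D$ disjoint (mod null sets) from $U^fD$ for every $f\in F=K\setminus\{\idG\}$, pass to the product system, and apply the one-element hypothesis to $A\times D$. The only place you diverge is in how the auxiliary system is built, and there the paper is much more economical: it takes $Y=\{0,1\}^G$ with the $(\tfrac12,\tfrac12)$-Bernoulli measure and the shift, and sets $B=\{y: y(\idG)=1,\ y(f)=0\ \text{for all}\ f\in F\}$; then $B\cap fB=\emptyset$ for every $f\in F$ simply because a point of $fB$ takes the value $1$ at $f$ while a point of $B$ takes the value $0$ there. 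This one cylinder set replaces your entire co-induction-from-cyclic-subgroups construction (and the Kechris--Solecki--Todorcevic alternative), and it sidesteps the case analysis on whether $\langle f\rangle$ is finite or infinite. Your route does work --- the co-induced actions are measure-preserving on standard spaces, the evaluation map is $H_f$-equivariant, and the product bookkeeping is fine --- but it buys nothing extra here; if anything the Bernoulli cylinder gives a set of larger measure (and the paper notes it can be pushed to roughly $e^{-1}/(|F|+1)$ with a biased Bernoulli measure). One stylistic quibble: your claim that the obstruction ``cannot be arranged inside the given system'' is the right intuition, but the fix is a single explicit system rather than a system tailored to each $f$.
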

\begin{proof}
It is clear that every set of measurable recurrence has the stated property.
Conversely, suppose that $G$ is a countable group and that $R$ is a subset of $G$ with the stated property.
Let $K$ be a compact subset of $G$, let $(X,\mathscr{B},\mu)$ be a separable probability space equipped with a measure-preserving $G$ action, and let $A$ be a nonnull measurable subset of $X$.
Consider the space $Y=\{0,1\}^{G}$ with the $(\frac12,\frac12)$-Bernoulli measure and the shift action of $G$.
Let $F =K\setminus\{\idG\}$, which is a finite set, and put
\begin{equation*}
B=\{y\in Y : y(\idG)=1, y(f)=0 \text{ for all } f\in F\}.
\end{equation*}
Then $A\times B$ is a nonnull measurable subset of $X\times Y$, and by the hypothesis there exists a $g\in R\setminus\{\idG\}$ such that $(A\times B)\cap (gA\times gB)$ is nonnull.
By construction of $B$ it follows that $g\in R\setminus K$ and $A\cap gA$ is nonnull.
Hence $R$ is a set of measurable recurrence.
\end{proof}

We remark that the measure of the set $B$ in the above proof can be improved considerably.
Indeed, above $B$ has measure $2^{-n-1}$, where $n=|F|$, but if one uses instead the $(\frac{n}{n+1},\frac{1}{n+1})$-Bernoulli measure then the measure of $B$ will become $\frac{1}{n+1}\big(\frac{n}{n+1}\big)^{n} > e^{-1}/(n+1)$.

\begin{example}
\label{eg:fpiMeasRec}
Let $G$ be a \lcsc{} group.
Let $P$ be a subset of $G$ such that for every neighborhood $U$ of $\idG$ the set $UP$ contains a set of the form $\fpi(g_n)$ with $g_n \to \infty$.
Then $P$ is a set of measurable recurrence.
\end{example}
\begin{proof}
Fix a compact set $K$ in $G$ and a measure-preserving action of $G$ on a separable probability space $(X,\mathscr{B},\mu)$.
Let $A$ be a non-null, positive-measure subset of $X$.
Since the induced action on $L^{2}(X,\mathscr{B},\mu)$ is strongly continuous, there is a relatively compact, symmetric neighborhood $U$ of the identity such that $\mu(A\cap gA) > \mu(A) - \mu(A)^{2}/2$ for any $g\in U$.

Let $n\mapsto g_{n}$ be a sequence such that $g_n \to \infty$ and $\fpi(g_{n}) \subset UP$.
Let $n \mapsto h_n$ be a subsequence of $n \mapsto g_n$ such that $\fpi(h_n) \subset G \setminus \overline{U}K$.
Such a sequence can be constructed by choosing $h_{n+1}$ from
\begin{equation*}
\{ g_i : i \in \N \} \setminus \cup \, \{ \inc_\alpha(h_i)^{-1} \overline{U}K : \emptyset \ne \alpha \subset \{ 1,\dots,n\} \}
\end{equation*}
inductively.
By a standard argument (originally due to Gillis \cite{MR1574762}) we have
\begin{equation*}
\mu(h_1 \cdots h_m A \cap h_1 \cdots h_n A) > \mu(A)^{2}/2
\end{equation*}
for some $m > n$.
Then $h_{n+1} \cdots h_m \in \fpi(g_n) \setminus \overline{U}K$ and $\mu(A \cap h_{n+1} \cdots h_m A) > \mu(A)^{2}/2$.
Let $g\in U$ be such that $gh_{n+1} \cdots h_m \in P$.
Then $\mu(A \cap g h_{n+1} \cdots h_m A) > 0$.
\end{proof}

In any given group $G$ we denote the class of sets of measurable recurrence by $\SetRec$.

\begin{lemma}
\label{lem:meas-rec-part-reg}
The class $\SetRec$ is partition regular.
\end{lemma}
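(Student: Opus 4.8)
The plan is to prove the contrapositive: a finite union of subsets of $G$, none of which is a set of measurable recurrence, is again not a set of measurable recurrence. Partition regularity of $\SetRec$ follows at once, and by an obvious induction it suffices to treat a union of two sets.

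So I would suppose $R = C_1 \cup C_2 \in \SetRec$ and assume toward a contradiction that neither $C_1$ nor $C_2$ lies in $\SetRec$. For each $i \in \{1,2\}$ this furnishes, by Definition~\ref{def:SetRec}, a compact set $K_i \subset G$, a measure-preserving action $T_i$ of $G$ on a separable probability space $(X_i,\mathscr{B}_i,\mu_i)$, and a non-null $A_i \in \mathscr{B}_i$ such that $\mu_i(A_i \cap T_i^{g}A_i)=0$ for all $g \in C_i \setminus K_i$.

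Next I would form the product probability space $(X,\mathscr{B},\mu)=(X_1\times X_2,\mathscr{B}_1\otimes\mathscr{B}_2,\mu_1\times\mu_2)$, which is again separable, and equip it with the diagonal action $T^{g}(x_1,x_2)=(T_1^{g}x_1,T_2^{g}x_2)$; this is a jointly measurable, measure-preserving action of $G$ on $(X,\mathscr{B},\mu)$. With $A=A_1\times A_2$, which is non-null, and $K=K_1\cup K_2$, which is compact, any $g\in R\setminus K$ lies in $C_i\setminus K_i$ for some $i$, whence $\mu(A\cap T^{g}A)\le\mu_i(A_i\cap T_i^{g}A_i)=0$. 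Thus no $g\in R\setminus K$ satisfies $\mu(A\cap T^{g}A)>0$, contradicting $R\in\SetRec$, and the lemma follows.

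The construction is essentially all there is to the argument; the only points needing (routine) verification are that a finite product of separable probability spaces is separable and that the diagonal $G$-action is jointly measurable and measure preserving. I do not anticipate any real obstacle.
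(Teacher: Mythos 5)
Your proof is correct and is essentially the same as the paper's: both take witnesses $(K_i, T_i, A_i)$ for the failure of recurrence of each cell, then pass to the product action on the product space with the rectangle $A_1\times\cdots\times A_r$ and the compact set $K_1\cup\cdots\cup K_r$. The only cosmetic difference is that you reduce to two cells by induction while the paper treats all $r$ cells at once.
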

\begin{proof}
Suppose $R_{1}\cup\dots\cup R_{r}\in\SetRec$.
We need to show that $R_{i}\in\SetRec$ for some $i\in\{1,\dots,r\}$.
Assume $R_{i}\not\in\SetRec$ for all $i=1,\dots,r$.
Thus for each $1 \le i \le r$ there is a compact set $K_{i}$ in $G$, a measure-preserving action $T_i$ of $G$ on a separable probability space, and a positive measure set $B_{i}$ in the probability space witnessing the fact that $R_{i}$ is not a set of measurable recurrence.
The positive-measure set $B_1 \times \cdots \times B_r$ in the product probability space equipped with the product action $T_1 \times \cdots \times T_r$, and the compact set $K_1 \cup \cdots \cup K_r$ now witness the fact that $R_1 \cup \cdots \cup R_r$ is not a set of measurable recurrence, which is a contradiction.
\end{proof}

Given a class of subsets it is natural (see \cite{MR2353901} for an extensive discussion) to consider its \define{dual class}, which consists of all subsets whose intersection with every member of the given class is non-empty.
Denote by $\SetRec^*$ the dual class of $\SetRec$.
It is clear that if $A\in\SetRec^{*}$, then every superset of $A$ is also a member $\SetRec^{*}$. 
It follows from partition regularity of $\SetRec$ that the intersection of any two members of $\SetRec^{*}$ is again a member of $\SetRec^{*}$.
Since every cocompact subset of a member of $\SetRec$ is clearly a member of $\SetRec$, this has the following consequence.
\begin{corollary}
\label{cor:cofin-pres-srs}
Any cocompact subset of an $\SetRec^{*}$ set is $\SetRec^{*}$.
\end{corollary}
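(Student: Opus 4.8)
The plan is to reduce everything to two facts already recorded above: that $\SetRec$ is closed under passing to cocompact subsets, and that $\SetRec^{*}$ is closed under taking supersets. So suppose $A \in \SetRec^{*}$ and $B \subseteq A$ is cocompact in $A$, that is, $A \setminus B$ is contained in a compact set $K \subseteq G$, so that $A \subseteq B \cup K$. To conclude $B \in \SetRec^{*}$ I must verify that $B$ meets every member of $\SetRec$.

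Fix then $R \in \SetRec$. The first step is to note that $R \setminus K \in \SetRec$. Indeed, given an arbitrary compact set $K' \subseteq G$, a measure-preserving action of $G$ on a separable probability space $(X,\mathscr{B},\mu)$, and a non-null $A' \in \mathscr{B}$, applying Definition~\ref{def:SetRec} to $R$ with the compact set $K \cup K'$ produces some $g \in R \setminus (K \cup K') = (R \setminus K) \setminus K'$ with $\mu(A' \cap T^{g} A') > 0$; since $K'$, the action, and $A'$ were arbitrary, $R \setminus K$ is a set of measurable recurrence.

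The second step uses the hypothesis on $A$. Since $A \in \SetRec^{*}$ and $R \setminus K \in \SetRec$, the intersection $A \cap (R \setminus K)$ is non-empty; choose a point $x$ in it. Then $x \in A \subseteq B \cup K$ while $x \notin K$, forcing $x \in B$; and $x \in R$. Hence $x \in B \cap R$, so $B$ meets $R$. As $R$ ranged over all of $\SetRec$, this gives $B \in \SetRec^{*}$.

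I do not expect any genuine obstacle here: the argument is essentially bookkeeping with the defining clauses, and the only mildly substantive point — that deleting a relatively compact set from a set of measurable recurrence leaves a set of measurable recurrence — is built into Definition~\ref{def:SetRec} through the requirement that the recurrence time be found in $R \setminus K$ for every compact $K$. Everything else is pure set manipulation together with the upward closure of $\SetRec^{*}$.
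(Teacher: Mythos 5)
Your proof is correct and is essentially the paper's intended argument: the paper derives the corollary from exactly the two facts you isolate (that removing a compact set from a set of measurable recurrence leaves a set of measurable recurrence, which is built into Definition~\ref{def:SetRec}, and the duality/upward-closure of $\SetRec^{*}$), and you have simply written out the bookkeeping explicitly. No gaps.
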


We will need later the fact that return time sets in certain topological dynamical systems belong to $\SetRec^{*}$.

\begin{lemma}
\label{lem:top-ret-is-sec-ret}
Let $(X,G)$ be a minimal topological dynamical system where $G$ acts by isometries.
For any $x$ in $X$ and any neighborhood $U$ of $x$ the set $\ret_U(x)$ is in $\SetRec^{*}$.
\end{lemma}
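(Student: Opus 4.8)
The plan is to show that $\ret_U(x)$ meets every set of measurable recurrence by exhibiting a suitable measure-preserving action built from the topological system $(X,G)$ itself, together with a suitable positive-measure set. Since $(X,G)$ is minimal and $G$ acts by isometries, it is equicontinuous (the orbit of any point is dense, so the modulus of equicontinuity is uniform). By the amenable Bogolioubov--Krylov theorem $(X,G)$ carries an invariant Borel probability measure $\mu$, and by Theorem~\ref{uniqueErgodicitySegal} (applied, if necessary, after noting equicontinuity plus minimality) the system is uniquely ergodic with full-support invariant measure $\mu$; in particular $x$ lies in the support of $\mu$. Thus $(X,\mathscr{B},\mu,G)$ is a measure-preserving action in the sense of Definition~\ref{def:meas-action} (joint continuity gives joint measurability, and $X$ is second countable so the probability space is separable).

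Next I would fix a set of measurable recurrence $R$ and aim to produce $g\in R$ with $gx\in U$. Shrinking $U$, we may assume $U$ is an open ball $B(x,\delta)$. Because $G$ acts by isometries, for $g\in G$ the condition $\mu\bigl(B(x,\delta/2)\cap T^g B(x,\delta/2)\bigr)>0$ forces the two balls to intersect (both have positive measure since $x$ is in the support and $\mu$ is $G$-invariant, but more to the point: $T^g B(x,\delta/2)=B(gx,\delta/2)$, so a nonempty intersection of the two balls of radius $\delta/2$ gives $d(x,gx)<\delta$, i.e. $gx\in U$). Now set $A=B(x,\delta/2)$, which is non-null since $x\in\operatorname{supp}\mu$. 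Applying the definition of a set of measurable recurrence to $R$, the action $T$, the non-null set $A$, and the compact set $K=\{\idG\}$ (or any fixed compact set; the definition even lets us avoid any prescribed compact set), we obtain $g\in R$ with $\mu(A\cap T^gA)>0$. By the observation just made, $gx\in U$, so $g\in\ret_U(x)\cap R$. Since $R$ was an arbitrary set of measurable recurrence, $\ret_U(x)\in\SetRec^*$.

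The one point requiring a little care — and the main (minor) obstacle — is justifying that $x$ belongs to the support of $\mu$ and hence that balls around $x$ have positive measure: this is where minimality is genuinely used, via unique ergodicity and full support of the invariant measure for minimal equicontinuous systems (Theorem~\ref{uniqueErgodicitySegal}), rather than merely invoking Bogolioubov--Krylov. Everything else is a direct unwinding of definitions, using that $T^g$ is an isometry so that $T^gB(x,r)=B(gx,r)$, which converts the dynamical recurrence statement $\mu(A\cap T^gA)>0$ into the metric statement $d(x,gx)<\delta$.
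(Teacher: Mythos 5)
Your proposal is correct and follows essentially the same route as the paper: fix an invariant measure, note that the ball $B_{\delta/2}(x)$ is non-null, and use the isometry hypothesis to convert $\mu\bigl(B_{\delta/2}(x)\cap T^{g}B_{\delta/2}(x)\bigr)>0$ into $gx\in U$. The only (harmless) difference is that you route the positivity of $\mu(B_{\delta/2}(x))$ through unique ergodicity and full support of equicontinuous minimal systems, whereas the paper gets it directly from minimality alone, which suffices since every invariant measure of a minimal system has full support.
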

\begin{proof}
Let $\delta > 0$ be such that the ball $B_\delta(x)$ is contained in $U$.
Let $\mu$ be an invariant Borel probability measure on $(X,G)$.
By minimality $V := B_{\delta/2}(x)$ has positive measure.
Since $G$ acts by isometries, the $\SetRec^*$ set $\{ g : V \cap gV \ne \emptyset \}$ is contained in $\ret_U(x)$.
\end{proof}

Finally, we note that the classes $\SetRec$ and $\SetRec^{*}$ are closed under conjugation.

\subsection{Syndetic, thick, and piecewise-syndetic sets}
Given a topological group $G$, denote by $\Syndetic$, $\Thick$, and $\PWSyndetic$, the classes of syndetic, thick, and piecewise syndetic subsets, respectively.

\begin{lemma}
\label{lem:synd-thick-dual}
$\Syndetic^{*}=\Thick$ and $\Thick^{*}=\Syndetic$.
\end{lemma}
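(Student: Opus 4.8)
The plan is to derive both equalities from a single structural fact: a subset $T\subset G$ is thick if and only if its complement $G\setminus T$ is \emph{not} syndetic. First I would record the elementary duality between the quantifiers in the two definitions. If $S$ is syndetic with $KS=G$ for some compact $K$, then for every $g\in G$ we have $K^{-1}g\cap S\neq\emptyset$, and conversely $K^{-1}g\cap S\neq\emptyset$ for all $g$ forces $KS=G$. Likewise, since inversion is a homeomorphism of $G$, the set $T$ is thick if and only if for every compact $K$ there is $g$ with $K^{-1}g\subset T$ (replace the compact set in the definition by its inverse). Combining these, $T$ fails to be thick exactly when there is a compact $K$ such that $K^{-1}g\cap(G\setminus T)\neq\emptyset$ for every $g$, i.e.\ $K^{-1}(G\setminus T)=G$, i.e.\ $G\setminus T$ is syndetic. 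This is the one step where care is needed --- the only subtlety is matching the left-translate quantifier in syndeticity against the mixed-sided quantifier in the definition of thickness via the compact set $K^{-1}$; there is no genuine obstacle.

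Granting this, $\Syndetic^{*}=\Thick$ follows quickly. If $T$ is thick and $S$ is syndetic with $KS=G$, apply thickness to the compact set $K^{-1}$ to get $g$ with $K^{-1}g\subset T$; since $K^{-1}g\cap S\neq\emptyset$ by syndeticity, $T\cap S\neq\emptyset$. Hence $\Thick\subset\Syndetic^{*}$. Conversely, if $T\notin\Thick$, then by the key fact $G\setminus T$ is syndetic and trivially disjoint from $T$, so $T\notin\Syndetic^{*}$; contrapositively $\Syndetic^{*}\subset\Thick$.

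Finally, $\Thick^{*}=\Syndetic$. The inclusion $\Syndetic\subset\Thick^{*}$ is exactly the intersection statement just proved: every syndetic set meets every thick set. For the reverse inclusion, apply the key fact with $T=G\setminus S$ to obtain its contrapositive form: $S$ is not syndetic if and only if $G\setminus S$ is thick. So if $S\notin\Syndetic$ then $G\setminus S$ is a thick set disjoint from $S$, whence $S\notin\Thick^{*}$; this gives $\Thick^{*}\subset\Syndetic$ and completes the proof.
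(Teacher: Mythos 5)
Your proof is correct and follows essentially the same route as the paper: the paper packages your key fact as the dichotomy that whenever $G=P\cup Q$ either $P$ is thick or $Q$ is syndetic, and its argument that every thick set meets every syndetic set is the same quantifier manipulation you give. (One immaterial slip: the condition $K^{-1}g\cap(G\setminus T)\neq\emptyset$ for every $g$ unwinds to $K(G\setminus T)=G$ rather than $K^{-1}(G\setminus T)=G$, but since $K^{-1}$ is compact exactly when $K$ is, either identity witnesses syndeticity of $G\setminus T$ and nothing downstream is affected.)
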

\begin{proof}
First, let $S$ be syndetic in $G$ and let $T$ be thick in $G$.
By syndeticity there exists a compact set $K$ such that $KS=G$.
By thickness there exists $g\in G$ such that $K^{-1}g\subset T$.
Write $g=ks$ with $k\in K$, $s\in S$.
Thus the intersection of any syndetic set with any thick set is non-empty.
This implies $\Thick\subset\Syndetic^{*}$ and $\Syndetic\subset\Thick^{*}$.

We now prove that if $G = P \cup Q$ then either $P$ is thick or $Q$ is syndetic.
If $P$ is not thick then there exists a compact set $K$ such that for each $g \in G$ we have $Kg \cap Q \ne \emptyset$.
Thus for any $g \in G$ we can find $k \in K$ and $q \in Q$ such that $kg = q$.
This implies that $K^{-1}Q = G$ as desired.

Now, if $P$ does not belong to $\Thick$ then its complement is syndetic so $P$ does not belong to $\Syndetic^{*}$.
Similarly, if $Q$ does not belong to $\Syndetic$ then its complement is thick so $Q$ does not belong to $\Thick^{*}$.
Thus $\Thick\supset\Syndetic^{*}$ and $\Syndetic\supset\Thick^{*}$.
\end{proof}

\begin{lemma}
\label{lem:largeShiftsForThick}
Let $G$ be a group that is not compact.
Then for any thick subset $T$ of $G$ and any compact subset $K$ of $G$ there is $g \in G\setminus K$ such that $Kg \subset T$.
\end{lemma}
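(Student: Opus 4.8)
The plan is to reduce everything to the defining property of thickness after enlarging the compact set $K$ in a suitable way. The naive attempt — apply thickness directly to $K$ to obtain some $g$ with $Kg \subset T$ — does not suffice, since a priori every such $g$ could lie inside $K$. The idea is instead to produce two candidate shifts simultaneously and argue, using non-compactness of $G$, that they cannot both lie in $K$.

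Concretely, I would first dispose of the trivial case $K = \emptyset$ (then any $g \in G$ works) and assume $K \neq \emptyset$. Next I would observe that $KK^{-1}$ is compact, being the image of the compact set $K \times K$ under the continuous map $(a,b) \mapsto ab^{-1}$. Since $G$ is not compact, $G \setminus KK^{-1} \neq \emptyset$, so we may fix a point $h \in G \setminus KK^{-1}$. Now apply the thickness of $T$ to the compact set $K \cup Kh$: there is $g \in G$ with $(K \cup Kh)g \subset T$, that is, $Kg \subset T$ and $K(hg) \subset T$ at once. The key point is that $g$ and $hg$ cannot both belong to $K$: if they did, then $h = (hg)g^{-1} \in KK^{-1}$, contradicting the choice of $h$. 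Hence at least one of $g$, $hg$ lies in $G \setminus K$; call it $g'$. Then $g' \in G \setminus K$ and $Kg' \subset T$, which is what we want.

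The one genuinely delicate point — and the step I would regard as the crux — is the choice of how to enlarge $K$: one must adjoin the right translate $Kh$ (rather than, say, a single extra point) precisely so that the two resulting shifts differ by $h$ on the left, and then the hypothesis $h \notin KK^{-1}$ is exactly what forces one of the two shifts out of $K$. The remaining ingredients — compactness of $KK^{-1}$ in a topological group, and the existence of a point outside it when $G$ is not compact — are routine.
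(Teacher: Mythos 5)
Your argument is correct and is essentially identical to the paper's proof: the paper also picks $h \in G \setminus KK^{-1}$, applies thickness to $K \cup Kh$, and notes that $g \in K$ and $hg \in K$ would force $h = (hg)g^{-1} \in KK^{-1}$. The extra care you take with the empty case and with compactness of $KK^{-1}$ is fine but not needed beyond what the paper leaves implicit.
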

\begin{proof}
Let $T$ and $K$ be thick and compact subsets of $G$ respectively.
Let $h\in G\setminus KK^{-1}$.
By thickness we have $(K\cup Kh)g\subset T$ for some $g\in G$.
Suppose that $g\in K$ and $hg\in K$.
Then $h=(hg)g^{-1}\in KK^{-1}$, a contradiction.
Hence $g\not\in K$ or $hg\not\in K$, as required.
\end{proof}

\section{The Furstenberg correspondence principle}
\label{sec:fc}
Since the configurations we are interested in (see Definition~\ref{def:finProdSets}) are two-sided in nature, we need a correspondence principle that is sensitive to multiplication on the left and on the right.
Roughly speaking, a correspondence principle should relate recurrence for sets of positive upper density in our group to recurrence in a certain measure-preserving action of the group on a probability space.
It turns out that this is not possible for arbitrary Borel sets of positive upper density even in $\mathbb{R}$.
Indeed, setting $\Phi_N = [0,N]$ in $\mathbb{R}$, \cite[Theorem~D]{MR1269198} exhibits for all but countably many $\alpha > 0$ a subset $E$ of $\mathbb{R}$ with $\dens_\Phi(E) = 1/2$ such that $\dens_\Phi(E - n^\alpha \cap E) = 0$ for all $n$ in $\N$.
On the other hand, whenever $\alpha > 0$ is irrational, the set $\{ n^\alpha : n \in \N \}$ is a set of measurable recurrence because (as first proved in \cite{JFM56.0898.04}) the sequence $n^\alpha t$ is uniformly distributed $\bmod 1$ for any real, non-zero $t$.

An appropriate one-sided correspondence principle for general \lcsc{} groups was given in \cite[Theorem~1.1]{MR2561208}.
We now describe a two-sided version adequate for our needs.
It is for substantial subsets, which we now define in general, that we can prove a correspondence principle.

\begin{definition}
\label{def:substantial}
We say that a subset $S$ of $G$ is \define{substantial} if one can find a measurable subset $W$ of $G$ with $\upperdens_\Phi(W) > 0$ for some two-sided Reiter sequence $\Phi$ in $\lp^{1}(G,\haar)$ and a symmetric, open subset $U$ of $G$ containing $\idG$ such that $S \supset UWU$.
\end{definition}

In unimodular groups the notion of being substantial does not change if one demands $\upperdens_\Phi(W) > 0$ for some two-sided \folner{} sequence $\Phi$ in the definition; this follows from Proposition~\ref{prop:slicing}.
Thus Definition~\ref{def:substantial} agrees with Definition~\ref{def:substantial:unimod} when both apply.

\begin{theorem}[Correspondence principle]
\label{thm:lcscfcp}
Let $G$ be a \lcsca{} group with a left Haar measure $\haar$ and a two-sided Reiter sequence $\Phi$.
Let $W$ be a Borel subset of $G$, let $U$ be an open neighborhood of $\idG$, and let $S\supseteq UWU$ be measurable.
Then there is a compact metric space $X$ with a continuous $G\times G$-action $L\times R$ and a non-negative, continuous function $\xi$ on $X$ such that the following holds.
\begin{enumerate}
\item\label{thm:lcscfcp:mu}
There exists an $L\times R$-invariant Borel probability measure $\mu$ on $X$ such that $\int \xi \intd\mu = \upperdens_{\Phi}(W)$ and
\begin{equation*}
\upperdens_\Phi(g_1^{-1} S h_1 \cap \cdots \cap g_n^{-1} S h_n) \ge \int L^{g_1} R^{h_1} \xi \cdots L^{g_n} R^{h_n} \xi \intd\mu
\end{equation*}
for any $g_1,\dots,g_n,h_1,\dots,h_n$ in $G$.
\item\label{thm:lcscfcp:nu}
There exist an ergodic $L\times R$-invariant Borel probability measure $\nu$ on $X$ and a two-sided Reiter sequence $\Psi$ such that $\int \xi \intd\nu \geq \upperdens_{\Phi}(W)$ and
\begin{equation}
\label{eqn:lcscfcp:nu}
\upperdens_\Psi(g_1^{-1} S h_1 \cap \cdots \cap g_n^{-1} S h_n) \ge \int L^{g_1} R^{h_1} \xi \cdots L^{g_n} R^{h_n} \xi \intd\nu
\end{equation}
for any $g_1,\dots,g_n,h_1,\dots,h_n$ in $G$.
\end{enumerate}
If $G$ is discrete, we can take $\xi$ to be a characteristic function of a clopen subset of $X$.
\end{theorem}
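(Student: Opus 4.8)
The plan is to realize a smoothed version of $\ind{W}$ as a continuous function on a metrizable compactification of $G$ that carries the $G\times G$-action, and then to push the Reiter averages forward. First I would fix a symmetric, relatively compact, open neighborhood $V\subseteq U$ of $\idG$ together with non-negative continuous functions $\psi_{1},\psi_{2}$ supported in $V$ with $\int\psi_{1}\intd\haar=1$ and $\int\psi_{2}(z^{-1})\intd\haar(z)=1$, and set $\xi_{0}=\psi_{1}\conv\ind{W}\conv\psi_{2}$. Using the identities $\lmult_{g}(f\conv h)=(\lmult_{g}f)\conv h$ and $\rmult_{g}(f\conv h)=f\conv(\rmult_{g}h)$ together with continuity of left and right translation on $\lp^{1}(G,\haar)$, one checks that $\xi_{0}$ is both left and right uniformly continuous; the two normalizations are chosen precisely so that $0\le\xi_{0}\le 1$, while $\xi_{0}$ vanishes outside $VWV\subseteq UWU\subseteq S$, so $0\le\xi_{0}\le\ind{S}$. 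Let $\mathcal{A}\subseteq\Cb(G)$ be the closed unital $*$-subalgebra generated by all $\lmult_{g}\rmult_{h}\xi_{0}$ with $g,h\in G$; since $\xi_{0}$ is uniformly continuous and $G$ is second countable, $\mathcal{A}$ is separable, and by construction it is invariant under the commuting actions $\lmult$ and $\rmult$. Take $X$ to be the Gelfand spectrum of $\mathcal{A}$, a compact metric space, let $\xi\ge 0$ be the Gelfand transform of $\xi_{0}$, let $L\times R$ be the jointly continuous $G\times G$-action on $X$ induced by $(\lmult,\rmult)$, and let $\iota : G\to X$ be the canonical continuous map, so that $\xi\circ\iota=\xi_{0}$ and (with this parametrization of $L\times R$) $(L^{g}R^{h}F)\circ\iota=\lmult_{g^{-1}}\rmult_{h^{-1}}(F\circ\iota)$ for $F\in\cont(X)$.

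The crucial observation is that $\int\Phi_{N}\xi_{0}\intd\haar-\int\Phi_{N}\ind{W}\intd\haar\to 0$ as $N\to\infty$: convolving $\Phi_{N}$ against $\psi_{1}$ on the left replaces it by an average of the translates $\lmult_{y^{-1}}\Phi_{N}$ over $y$ in the compact set $\operatorname{supp}\psi_{1}$, and these converge to $\Phi_{N}$ in $\lp^{1}(G,\haar)$ by \eqref{eqn:leftReiterDef}, so dominated convergence applies; convolving against $\psi_{2}$ on the right is handled the same way via \eqref{eqn:reitReiterDef}, the modular factor appearing there being exactly what the right-Haar normalization of $\psi_{2}$ requires. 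In particular $\limsup_{N}\int\Phi_{N}\xi_{0}\intd\haar=\upperdens_{\Phi}(W)$. Now pass to a subsequence $(N_{k})$ along which $\int\Phi_{N_{k}}\ind{W}\intd\haar\to\upperdens_{\Phi}(W)$ and the states $F\mapsto\int\Phi_{N_{k}}(F\circ\iota)\intd\haar$ on $\cont(X)$ converge weak-$*$ to a Borel probability measure $\mu$; moving the translations $\lmult_{a},\rmult_{b}$ back onto $\Phi_{N}$ and using \eqref{eqn:leftReiterDef} and \eqref{eqn:reitReiterDef} (with the modular factor absorbed as above) shows that $\mu$ is $L\times R$-invariant, and $\int\xi\intd\mu=\lim_{k}\int\Phi_{N_{k}}\xi_{0}\intd\haar=\upperdens_{\Phi}(W)$ by the comparison. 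Finally, since $\xi_{0}\le\ind{S}$ one has, pointwise on $G$, $\prod_{i=1}^{n}\lmult_{g_{i}^{-1}}\rmult_{h_{i}^{-1}}\xi_{0}\le\ind{E}$ with $E=g_{1}^{-1}Sh_{1}\cap\cdots\cap g_{n}^{-1}Sh_{n}$, so integrating against $\Phi_{N_{k}}$ and letting $k\to\infty$ gives $\int\prod_{i}L^{g_{i}}R^{h_{i}}\xi\intd\mu\le\upperdens_{\Phi}(E)$, which is the inequality in \ref{thm:lcscfcp:mu}.

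For \ref{thm:lcscfcp:nu} I would pass to the ergodic decomposition $\mu=\int\nu_{\omega}\intd P(\omega)$ of $\mu$ into ergodic $L\times R$-invariant measures; since the numbers $\int\xi\intd\nu_{\omega}$ average to $\upperdens_{\Phi}(W)$, some ergodic component $\nu$ satisfies $\int\xi\intd\nu\ge\upperdens_{\Phi}(W)$. It then suffices to produce a two-sided Reiter sequence $\Psi$ in $G$ with $\int\Psi_{N}(F\circ\iota)\intd\haar\to\int F\intd\nu$ for every $F\in\cont(X)$, because then \eqref{eqn:lcscfcp:nu} follows verbatim from the computation in the previous paragraph, with $\Psi$ and $\nu$ replacing $(\Phi_{N_{k}})$ and $\mu$. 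To obtain such a $\Psi$, extend the $L\times R$-invariant state $\nu$ on $\mathcal{A}$ to an $L\times R$-invariant mean on $\Cb(G)$ by averaging an arbitrary Hahn--Banach extension against a two-sided invariant mean of the amenable group $G\times G$; standard amenability theory then presents this mean as a weak-$*$ cluster point of a net of probability densities on $G$ that is asymptotically invariant in the senses of \eqref{eqn:leftReiterDef} and \eqref{eqn:reitReiterDef}, and separability of $\mathcal{A}$ together with second countability of $G$ lets one extract from that net a genuine two-sided Reiter sequence $\Psi$ with the required convergence on $\cont(X)$.

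When $G$ is discrete the smoothing is unnecessary: $\ind{W}$ is already left and right uniformly continuous and satisfies $\ind{W}\le\ind{UWU}\le\ind{S}$, so one may take $\xi_{0}=\ind{W}$ --- which makes the comparison step trivial --- and, $\ind{W}$ being a projection in $\mathcal{A}$, its Gelfand transform $\xi$ is the indicator of a clopen subset of $X$. The one genuinely delicate point is the last step of \ref{thm:lcscfcp:nu}: the ergodic-theoretic input allowing one to express a measure on $X$ through averages over $G$ is naturally a genericity statement about the $G\times G$-space $X$, but a generic point for the ergodic measure $\nu$ need not lie in the dense copy $\iota(G)$ of $G$ (for instance $\iota(G)$ is $\nu$-null whenever $G$ is discrete and $\nu$ is non-atomic), so one cannot simply read off a sequence in $G$ from a generic orbit; the detour through invariant means on $\Cb(G)$ and their approximation by asymptotically invariant densities is what repairs this. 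The construction of $X$, part \ref{thm:lcscfcp:mu}, and the reduction of \ref{thm:lcscfcp:nu} to the realization step are all routine.
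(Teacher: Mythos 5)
Your construction of $X$ and $\xi$ and your proof of part~\eqref{thm:lcscfcp:mu} follow the paper's argument essentially step for step: smooth $\ind{W}$ by convolving on both sides with bumps supported near $\idG$, pass to the Gelfand spectrum of the separable, translation-invariant \cstar{}-algebra generated by the smoothed function, realize $\mu$ as a weak-$*$ limit of the states $f\mapsto\int\Phi_{N}f\intd\haar$ along a subsequence, and use $\xi_{0}\le\ind{S}$ for the intersection inequality. That part is sound.

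The gap is in part~\eqref{thm:lcscfcp:nu}, at the step where you assert that the invariant mean extending $\nu$ can be realized as a weak-$*$ limit of a genuine two-sided Reiter \emph{sequence}. The extension-and-averaging step is fine, and Day's convexity argument does produce, for each finite subset $F\subset G$, finitely many test functions and $\epsilon>0$, a single density that is $\epsilon$-invariant under the elements of $F$ and $\epsilon$-close to the mean on the test functions; diagonalizing over a countable dense $D\subset G$ and a countable dense subset of $\cont(X)$ gives a sequence $\Psi_{N}$ with $\|\lmult_{d}\Psi_{N}-\Psi_{N}\|_{1}\to0$ and $\|\modular(d)\rmult_{d}\Psi_{N}-\Psi_{N}\|_{1}\to0$ for every $d\in D$ and the right limits on $\cont(X)$. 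But a Reiter sequence must satisfy these conditions for \emph{every} $g\in G$, and the set of $g$ for which they hold is a priori only a dense subgroup: there is no equicontinuity of $g\mapsto\|\lmult_{g}\Psi_{N}-\Psi_{N}\|_{1}$ uniform in $N$ that would let you pass from $D$ to all of $G$, and smoothing $\Psi_{N}$ against a fixed bump destroys either the asymptotic invariance or the convergence to $\nu$. So for non-discrete $G$ --- exactly the setting in which part~\eqref{thm:lcscfcp:nu} is consumed by Theorem~\ref{mainTheorem:WM:lcsc}, whose induction needs $\Psi$ to be a bona fide two-sided Reiter sequence --- the phrase ``separability together with second countability lets one extract a genuine two-sided Reiter sequence'' papers over a real difficulty rather than a routine one. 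Note also that the obstacle you cite against the generic-point route (a $\nu$-generic point need not lie in $\iota(G)$) is precisely what the paper's argument circumvents: the evaluation functional $e$ at $\idG$ has dense orbit in $X$ under $L$ alone and under $R$ alone, so a version of \cite[Proposition 3.9]{MR603625} makes $e$ itself generic for the ergodic measure $\nu$ along a left \folner{} sequence of the special form $\Theta_{N}s_{N}\times\Theta_{N}$ in $G\times G$, and unwinding $f(L^{l}R^{r}e)=f(lr^{-1})$ converts those averages into integration against the explicit two-sided Reiter sequence $\Psi_{N}=\haar(\Theta_{N})^{-2}\modular(s_{N})^{-1}\ind{\Theta_{N}s_{N}}\conv\ind{\Theta_{N}}^{*}$. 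Your route does go through when $G$ is discrete (then $D=G$ and the diagonal argument suffices), but as written it does not establish the theorem in the stated generality.
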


In the second part of Theorem~\ref{thm:lcscfcp} we obtain an ergodic measure-preserving system at the cost of modifying the Reiter sequence.
Thus if one is only interested in the two-sided upper Banach density, it suffices to consider ergodic measure-preserving systems, see Corollary~\ref{cor:fcp-erg:lcsc}.
This was already observed for the group $\mathbb{Z}$ in \cite[Proposition~3.1]{MR2138068}.

The probability space that our correspondence yields will be built on the Gelfand spectrum of a \cstar{}-algebra of functions on $G$.
Recall that one can think of the Gelfand spectrum as either the space of maximal ideals, or as the space of all non-trivial multiplicative linear forms.
As in \cite{MR2561208}, the \cstar{}-algebra we use will consist of uniformly continuous functions.

\begin{proof}[Proof of Theorem~\ref{thm:lcscfcp}]
We may assume without loss of generality that $U$ is symmetric.
Let $\psi$ be a continuous non-negative function supported in $U$ such that $\psi=\psi^{*}$ and $\int\psi \intd\haar = 1$.
The function $\xi := \psi \conv \ind{W} \conv \check{\psi}$ is non-negative, uniformly continuous, and dominated by $\ind{S}$.
We now consider the minimal closed $*$-subalgebra $\mathfrak{A}$ of bounded functions on $G$ that contains $\ind{G}$, contains $\xi$, and is invariant under $l$ and $r$.
By the Gelfand--Naimark theorem $\mathfrak{A}$ is canonically isomorphic to $C(X)$, where $X$ is the Gelfand spectrum of $\mathfrak{A}$, that is, the space of all non-trivial multiplicative linear forms on $\mathfrak{A}$ with the weak* topology.

Since $\mathfrak{A}$ consists of uniformly continuous functions, the $G\times G$-action $l\times r$ on $\mathfrak{A}$ is jointly continuous.
Since the action $l\times r$ is by C$^{*}$-algebra automorphisms, it follows that it induces a continuous $G\times G$ action $L\times R$ on $X$ such that $\lmult_{g}\rmult_{g} f = f \circ L_{g^{-1}}R_{g^{-1}}$ for every $f\in\mathfrak{A}$.
Since $\mathfrak{A}$ is separable, it follows that $X$ is metrizable.

Part \eqref{thm:lcscfcp:mu}.
Passing to a subsequence of $\Phi$ we may assume that $\dens_{\Phi}(W)$ exists.
Passing to a further subsequence of $\Phi$ we may assume that
\[
\mu(f) := \lim_{N\to\infty} \int \Phi_{N} f \intd\haar
\]
exists for every $f\in\mathfrak{A}$.
This is a positive unital linear functional on $\mathfrak{A}$, so by the Riesz--Markov--Kakutani representation theorem $\mu$ corresponds to a Borel probability measure on $X$.
The Reiter property of $\Phi$ implies that $\mu$ is $L\times R$-invariant.
We have
\begin{equation*}
\begin{aligned}
\mu(\xi)
 & =
\lim_{N \to \infty} \int \Phi_{N} (\psi \conv \ind{W} \conv \check{\psi}) \intd\haar\\
 & =
\lim_{N \to \infty} \iiint \Phi_{N}(x) \psi(y) \ind{W}(z) \check{\psi}(z^{-1}y^{-1}x) \intd\haar(x)\intd\haar(y)\intd\haar(z)\\
 & =
\lim_{N \to \infty} \iint \psi(y) \check{\psi}(x) \int \Phi_{N}(yzx) \ind{W}(z) \intd\haar(z)\intd\haar(x)\intd\haar(y)\\
 & =
\lim_{N \to \infty} \iint \psi(y) \psi(x) \modular(x) \int (\lmult_{y^{-1}}\rmult_{x}\Phi_{N})(z) \ind{W}(z) \intd\haar(z)\intd\haar(x)\intd\haar(y)\\
 & =
\lim_{N \to \infty} \int \Phi_{N}(z) \ind{W}(z) \intd\haar(z)
=
\dens_{\Phi}(W)
\end{aligned}
\end{equation*}
by the Reiter property of $\Phi$ and the dominated convergence theorem.

Lastly we have
\begin{equation*}
\begin{aligned}
&
\upperdens_\Phi(g_1^{-1} S h_1 \cap \cdots \cap g_n^{-1} S h_n)\\
\geq &
\lim_{N \to \infty} \int \Phi_{N} \cdot \lmult_{g_1^{-1}} \rmult_{h_1^{-1}} (\psi \conv \ind{W} \conv \check{\psi}) \cdots \lmult_{g_n^{-1}} \rmult_{h_n^{-1}} (\psi \conv \ind{W} \conv \check{\psi}) \intd\haar\\
= &
\int L^{g_1} R^{h_1} \xi \cdots L^{g_n} R^{h_n} \xi \intd\mu
\end{aligned}
\end{equation*}
for any $g_1,\dots,g_n,h_1,\dots,h_n$ in $G$ as desired.

Part \eqref{thm:lcscfcp:nu}.
Let $\nu$ be an ergodic component of $\mu$ under the action $L\times R$ such that $\int \xi \intd\nu \geq \int \xi \intd\mu$.
Let $e:\mathfrak{A}\to\mathbb{C}$ be the evaluation map at $\idG$.
Then $e\in X$.
Moreover, the orbit of $e$ under $L$ (or $R$) consists of the evaluation morphisms at all points of $G$.
Therefore both $L^{G}e$ and $R^{G}e$ separate points in $\mathfrak{A}$, so that $\overline{L^{G}e} = \overline{R^{G}e} = X$ by the Urysohn lemma.
A version of \cite[Proposition 3.9]{MR603625} now implies that the point $e$ is generic for $\nu$ under the action $L\times R$ with respect to some left \folner{} sequence of the form $(\Theta_{N}s_{N} \times \Theta_{N})$ on $G\times G$.
Since the function $L^{g_1} R^{h_1} \xi \cdots L^{g_n} R^{h_n} \xi$ is continuous, we have
\begin{multline*}
\int_{X} L^{g_1} R^{h_1} \xi \cdots L^{g_n} R^{h_n} \xi \intd\nu\\
=
\lim_{N\to\infty} \frac1{\haar(\Theta_{N})\haar(\Theta_{N}s_{N})} \int_{(l,r)\in \Theta_{N}s_{N} \times \Theta_{N}} (\lmult_{g_1^{-1}} \rmult_{h_1^{-1}} \xi \cdots \lmult_{g_n^{-1}} \rmult_{h_n^{-1}} \xi)(L^{l}R^{r}e) \intd\haar(l) \intd\haar(r)\\
\leq
\limsup_{N\to\infty} \frac1{\haar(\Theta_{N})^{2}\modular(s_{N})} \int_{(l,r)\in \Theta_{N}s_{N} \times \Theta_{N}} (\lmult_{g_1^{-1}} \rmult_{h_1^{-1}} \ind{S} \cdots \lmult_{g_n^{-1}} \rmult_{h_n^{-1}} \ind{S})(lr^{-1}) \intd\haar(l) \intd\haar(r)\\
=
\limsup_{N\to\infty} \frac1{\haar(\Theta_{N})^{2}\modular(s_{N})} \int_{g_{1}^{-1}Sh_{1} \cap\dots\cap g_{n}^{-1}Sh_{n}} \ind{\Theta_{N}s_{N}} \conv \ind{\Theta_{N}}^{*} \intd\haar(l),
\end{multline*}
and we obtain the conclusion with the two-sided Reiter sequence $\Psi_{N}=\haar(\Theta_{N})^{-2}\modular(s_{N})^{-1} \ind{\Theta_{N}s_{N}} \conv \ind{\Theta_{N}}^{*}$.

Finally, when $G$ is discrete we can take $U = \{ \idG \}$, so $\xi = \ind{W} = \xi^{2}$ is an indicator function of some clopen set $B$.
\end{proof}

We remark that, in Part~\ref{thm:lcscfcp:nu} of Theorem~\ref{thm:lcscfcp}, we need to pass from the given two-sided Reiter sequence to one for which $e$ is generic in order to get an ergodic measure-preserving system.
This can be masked if one is willing to use two-sided upper Banach density.

\begin{corollary}
\label{cor:fcp-erg:lcsc}
Let $G$ be a locally compact, second countable, amenable group with a left Haar measure $\haar$.
Let $W$ be a Borel subset of $G$, let $U$ be an open neighborhood of $\idG$, and let $S\supseteq UWU$ be measurable.
Then there is a compact metric space $X$ with a continuous $G\times G$-action $L\times R$, an ergodic $L\times R$-invariant Borel probability measure $\nu$ on $X$, and a non-negative, continuous function $\xi$ on $X$ such that $\int \xi \intd\nu = \upbdens(W)$ and
\[
\upbdens(g_1^{-1} S h_1 \cap \cdots \cap g_n^{-1} S h_n) \ge \int L^{g_1} R^{h_1} \xi \cdots L^{g_n} R^{h_n} \xi \intd\nu
\]
for any $g_1,\dots,g_n,h_1,\dots,h_n$ in $G$.

If $G$ is discrete, we can take $\xi$ to be a characteristic function of a clopen subset of $X$.
\end{corollary}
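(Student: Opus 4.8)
The plan is to deduce the corollary from Theorem~\ref{thm:lcscfcp}, exploiting the fact that the compact metric space $X$, the $G\times G$-action $L\times R$, and the function $\xi = \psi\conv\ind{W}\conv\check{\psi}$ built in the proof of that theorem depend only on $W$, $U$ and $S$, and not on the Reiter sequence; only the invariant measure varies with the Reiter sequence. So I fix once and for all this space $X$ (the Gelfand spectrum of the algebra $\mathfrak{A}$) and this function $\xi$, and I look for a single ergodic invariant measure realizing the supremum that defines $\upbdens$.

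Write $d = \upbdens(W) = \sup_\Phi \upperdens_\Phi(W)$, the supremum over two-sided Reiter sequences. For each $k \in \N$ I would pick a two-sided Reiter sequence $\Phi^{(k)}$ with $\upperdens_{\Phi^{(k)}}(W) > d - 1/k$ and then pass (twice) to a subsequence: first so that $\int \Phi^{(k)}_N \ind{W}\intd\haar \to \upperdens_{\Phi^{(k)}}(W)$, and then --- using that $\mathfrak{A}$ is separable --- so that $\mu_k(f) := \lim_N \int \Phi^{(k)}_N f\intd\haar$ exists for every $f\in\mathfrak{A}$. As in the proof of Part~\eqref{thm:lcscfcp:mu} of Theorem~\ref{thm:lcscfcp}, each $\mu_k$ is an $L\times R$-invariant Borel probability measure on $X$; the Reiter computation carried out there shows $\mu_k(\xi) = \lim_N \int \Phi^{(k)}_N \ind{W}\intd\haar = \upperdens_{\Phi^{(k)}}(W) \in (d - 1/k, d]$; and
\[
\int L^{g_1} R^{h_1} \xi \cdots L^{g_n} R^{h_n} \xi \intd\mu_k \le \upperdens_{\Phi^{(k)}}(g_1^{-1} S h_1 \cap \cdots \cap g_n^{-1} S h_n) \le \upbdens(g_1^{-1} S h_1 \cap \cdots \cap g_n^{-1} S h_n)
\]
for all $g_i, h_i \in G$. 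Since the $L\times R$-invariant Borel probability measures on the compact metric space $X$ form a weak$*$-compact set, after a further subsequence $\mu_k \to \mu_\infty$ weak$*$, and as $\xi$ and each of the products $L^{g_1} R^{h_1} \xi \cdots L^{g_n} R^{h_n} \xi$ is continuous, $\mu_\infty$ is an $L\times R$-invariant Borel probability measure with $\mu_\infty(\xi) = d$ inheriting the displayed inequalities.

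The remaining, and main, difficulty is to replace $\mu_\infty$ by an \emph{ergodic} measure without losing $\mu_\infty(\xi) = d$. I would handle this by first establishing that \emph{every} ergodic $L\times R$-invariant Borel probability measure $\nu'$ on $X$ satisfies $\nu'(\xi) \le d$ together with the inequalities above with $\nu'$ in place of $\mu_k$. This is a rerun of the argument in the proof of Part~\eqref{thm:lcscfcp:nu} of Theorem~\ref{thm:lcscfcp}: the properties $\overline{L^G e} = \overline{R^G e} = X$ invoked there belong to the point $e$ and the space $X$ and do not involve $\nu'$, so the cited version of \cite[Proposition 3.9]{MR603625} furnishes a two-sided Reiter sequence $\Psi = \bigl(\haar(\Theta_N)^{-2}\modular(s_N)^{-1}\ind{\Theta_N s_N}\conv\ind{\Theta_N}^*\bigr)$ for which $e$ is generic for $\nu'$. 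Evaluating the generic-point limit at $\xi = \psi\conv\ind{W}\conv\check{\psi}$ and running the Reiter computation of Part~\eqref{thm:lcscfcp:mu} in reverse gives $\nu'(\xi) = \lim_N \int \Psi_N \ind{W}\intd\haar \le \upperdens_\Psi(W) \le d$; evaluating instead at $L^{g_1} R^{h_1} \xi \cdots L^{g_n} R^{h_n} \xi$ and using $\xi \le \ind{S}$ pointwise gives, exactly as in the displayed chain in the proof of Part~\eqref{thm:lcscfcp:nu}, the bound $\int L^{g_1} R^{h_1} \xi \cdots L^{g_n} R^{h_n} \xi \intd\nu' \le \upperdens_\Psi(g_1^{-1} S h_1 \cap \cdots \cap g_n^{-1} S h_n) \le \upbdens(g_1^{-1} S h_1 \cap \cdots \cap g_n^{-1} S h_n)$.

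Granting this, I finish by taking an ergodic decomposition $\mu_\infty = \int \nu_\omega \intd P(\omega)$. For $P$-almost every $\omega$ the measure $\nu_\omega$ is ergodic and $L\times R$-invariant, hence $\nu_\omega(\xi) \le d$ for $P$-almost every $\omega$; since $\int \nu_\omega(\xi)\intd P(\omega) = \mu_\infty(\xi) = d$, it follows that $\nu_\omega(\xi) = d$ for $P$-almost every $\omega$. Fixing any such $\omega$ and putting $\nu = \nu_\omega$, we obtain an ergodic $L\times R$-invariant measure with $\int \xi\intd\nu = d = \upbdens(W)$ that, by the previous paragraph, also satisfies $\int L^{g_1} R^{h_1} \xi \cdots L^{g_n} R^{h_n} \xi \intd\nu \le \upbdens(g_1^{-1} S h_1 \cap \cdots \cap g_n^{-1} S h_n)$ for all $g_i, h_i$. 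This is the assertion of the corollary. The final clause is immediate: when $G$ is discrete the function $\xi$ is, already in Theorem~\ref{thm:lcscfcp}, the indicator of a clopen subset of $X$, and the argument above does not change $\xi$.
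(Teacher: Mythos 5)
Your proof is correct, and it reorganizes the argument in a way that differs from the paper's. The paper begins by \emph{choosing a single two-sided Reiter sequence $\Phi$ with $\dens_{\Phi}(W)=\upbdens(W)$}, feeds it into Part~\eqref{thm:lcscfcp:nu} of Theorem~\ref{thm:lcscfcp} to get an ergodic $\nu$ with $\int\xi\intd\nu\geq\upbdens(W)$, and then uses the genericity of $e$ along the product \folner{} sequence $(\Theta_N s_N\times\Theta_N)$ to prove the reverse inequality $\int\xi\intd\nu\leq\upbdens(W)$. You instead take near-extremal Reiter sequences $\Phi^{(k)}$, build the corresponding invariant measures $\mu_k$ on the fixed Gelfand space, pass to a weak\textsuperscript{*} limit $\mu_\infty$ with $\mu_\infty(\xi)=\upbdens(W)$, prove the upper bounds $\nu'(\xi)\leq\upbdens(W)$ and \eqref{eqn:lcscfcp:nu}-type inequalities for \emph{every} ergodic invariant measure $\nu'$ (by the same genericity computation, which indeed uses only that $\overline{L^{G}e}=\overline{R^{G}e}=X$), and then extract the desired $\nu$ by ergodic decomposition. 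The essential analytic content --- the quasi-genericity of the evaluation point $e$ and the resulting domination by $\upperdens_\Psi\leq\upbdens$ --- is identical in both arguments; what your route buys is that you never need the supremum defining $\upbdens(W)$ to be \emph{attained} by a single two-sided Reiter sequence (a point the paper's first sentence asserts without justification and which requires a small diagonalization to verify), at the price of invoking weak\textsuperscript{*} compactness of the invariant measures and the ergodic decomposition for the $G\times G$-action. Your observation that $X$ and $\xi$ depend only on $W$, $U$, $S$ and not on the Reiter sequence is accurate and is what makes the limiting argument possible; the final clause about $\xi$ being a clopen indicator in the discrete case is handled correctly since $\xi$ is never modified.
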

\begin{proof}
Let $\Phi$ be a two-sided Reiter sequence such that $\dens_{\Phi}(W)=\upbdens(W)$.
Let $(X,\nu,L\times R)$ be the regular probability measure-preserving system and $\xi$ the continuous function given by Theorem~\ref{thm:lcscfcp}\eqref{thm:lcscfcp:nu}.
It remains to show that $\int\xi\intd\nu \leq \upbdens(W)$.
By construction we have
\begin{multline*}
\int_{X} \xi \intd\nu
=
\lim_{N\to\infty} \frac1{\haar(\Theta_{N})\haar(\Theta_{N}s_{N})} \int_{(l,r)\in \Theta_{N}s_{N} \times \Theta_{N}} \xi(lr^{-1}) \intd\haar(l) \intd\haar(r)\\
=
\lim_{N\to\infty} \frac1{\haar(\Theta_{N})\haar(\Theta_{N}s_{N})} \int_{(l,r)\in \Theta_{N}s_{N} \times \Theta_{N}} \int_{y,z} \psi(y) \ind{W}(y^{-1}lr^{-1}z) \check{\psi}(z^{-1}) \intd\haar(y)\intd\haar(z) \intd\haar(l) \intd\haar(r)\\
=
\limsup_{N\to\infty} \frac1{\haar(\Theta_{N})\haar(\Theta_{N}s_{N})} \int_{y,z} \psi(y)\psi(z) \int_{(l,r)\in \Theta_{N}s_{N} \times \Theta_{N}} \ind{W}(y^{-1}lr^{-1}z) \intd\haar(l) \intd\haar(r) \intd\haar(y)\intd\haar(z).
\end{multline*}
By the Fatou lemma this is bounded above by
\[
\int_{y,z} \psi(y)\psi(z) \limsup_{N\to\infty} \frac1{\haar(\Theta_{N})\haar(\Theta_{N}s_{N})} \int_{(l,r)\in \Theta_{N}s_{N} \times \Theta_{N}} \ind{W}(y^{-1}l(z^{-1}r)^{-1}) \intd\haar(l) \intd\haar(r) \intd\haar(y)\intd\haar(z),
\]
and the $\limsup$ above equals $\upperdens_{\Psi}(W)$ by the \folner{} property.
Since $\int\psi\intd\mu=1$, it follows that the integral is bounded above by $\upbdens(W)$.
\end{proof}

\section{Large sets in WM groups}
\label{sec:WM}
This section is dedicated to the proof of the Theorem~\ref{mainTheorem:WM:lcsc}.
We also show that, in general, Theorem~\ref{mainTheorem:WM:lcsc} fails for \emph{left} substantial sets, even for countable groups and subsets of density $1$.
We will need some facts about commuting, measure preserving actions of WM groups on probability spaces.
Recall that two actions $T_1$ and $T_2$ of $G$ are said to \define{commute} if $T_1^g T_2^h = T_2^h T_1^g$ for all $g, h$ in $G$.
Given an action $T$ of $G$ on a probability space $(X,\mathscr{B},\mu)$, denote by $T \times T$ the induced action of $G$ on the product probability space $(X \times X,\mathscr{B} \otimes \mathscr{B},\mu \otimes \mu)$.

\begin{definition}
Let $T$ be an action of a \lcsc{} group $G$ on a probability space $(X,\mathscr{B},\mu)$.
Denote by $\inv(T)$ the closed subspace of $\lp^2(X,\mathscr{B},\mu)$ consisting of functions invariant under $T$, and by $\kron(T)$ the closed subspace of $\lp^2(X,\mathscr{B},\mu)$ spanned by the finite-dimensional, $T$-invariant subspaces therein.
\end{definition}

\begin{lemma}
\label{lem:I-prod}
For any measure-preserving action $T$ of a \lcscWMa{} group $G$ on a probability space $(X,\mathscr{B},\mu)$ we have $\inv(T) = \kron(T)$ and $\inv(T \times T) = \inv(T) \otimes \inv(T)$.
\end{lemma}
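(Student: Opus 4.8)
The plan is to establish the two identities in sequence, deriving the second from the first.

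\emph{The identity $\inv(T) = \kron(T)$.} The inclusion $\inv(T) \subseteq \kron(T)$ holds because any nonzero $f \in \inv(T)$ spans a one-dimensional $T$-invariant subspace. For the reverse inclusion it suffices to show that every finite-dimensional $T$-invariant subspace $V$ of $\lp^2(X,\mathscr{B},\mu)$ is contained in $\inv(T)$. The Koopman operators $U^g f = f \circ T^{g^{-1}}$ form a strongly continuous unitary representation of $G$ (this uses the joint measurability in Definition~\ref{def:meas-action}, as recorded in the discussion of $\lp^2$-continuity above), so restricting to $V$ gives a continuous finite-dimensional unitary representation of $G$. Since $G$ is WM this representation is trivial, whence $V \subseteq \inv(T)$; taking the closed linear span of all such $V$ yields $\kron(T) \subseteq \inv(T)$.

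\emph{The identity $\inv(T \times T) = \inv(T) \otimes \inv(T)$.} The inclusion $\supseteq$ is clear. For $\subseteq$, identify $\lp^2(X \times X, \mu \otimes \mu)$ with $\lp^2(X,\mathscr{B},\mu) \otimes \lp^2(X,\mathscr{B},\mu)$ and, for $F$ in this space, let $A_F$ be the Hilbert--Schmidt operator on $\lp^2(X,\mathscr{B},\mu)$ with integral kernel $F$. A short computation, using that the $G$-action preserves $\mu$, shows that $F$ is invariant under the Koopman representation of $T \times T$ if and only if $A_F$ commutes with every $U^g$. Assume $F$ is of this kind. Then the positive compact operators $A_F A_F^*$ and $A_F^* A_F$ also commute with all $U^g$, so each of their eigenspaces for a nonzero eigenvalue is a finite-dimensional $U$-invariant subspace of $\lp^2(X,\mathscr{B},\mu)$, hence lies in $\kron(T) = \inv(T)$ by the first part. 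Therefore $\overline{\im A_F} = \overline{\im A_F A_F^*} \subseteq \inv(T)$ and $(\ker A_F)^\perp = \overline{\im A_F^* A_F} \subseteq \inv(T)$. Unwinding the correspondence between $F$ and $A_F$ relative to the splitting $\lp^2(X,\mathscr{B},\mu) = \inv(T) \oplus \inv(T)^\perp$ in each tensor factor — and using that $\inv(T)$ is closed under complex conjugation — this says precisely that $F \in \inv(T) \otimes \inv(T)$.

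The only step that is not routine bookkeeping is the Hilbert--Schmidt argument in the second part: translating invariance of a square-integrable kernel into the assertion that the associated operator intertwines the Koopman representation, and then extracting finite-dimensional invariant subspaces from an intertwining positive compact operator. Everything else is a consequence of minimal almost periodicity of $G$ (which is exactly what the first part amounts to) together with elementary manipulations of orthogonal projections; amenability of $G$ plays no role in this lemma.
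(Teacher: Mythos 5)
Your proof of the first identity is exactly the paper's argument: a finite-dimensional $T$-invariant subspace carries a continuous finite-dimensional unitary representation of $G$, which is trivial since $G$ is WM. For the second identity you take a genuinely different route. The paper deduces $\inv(T \times T) = \inv(T) \otimes \inv(T)$ by applying the first identity to the action $T \times T$ (note that $T\times T$ is still an action of the WM group $G$ on a separable probability space, so $\inv(T\times T)=\kron(T\times T)$) and then invoking the tensor identity $\kron(T \times T) = \kron(T) \otimes \kron(T)$, which it cites from the literature rather than proving. You instead prove the inclusion $\inv(T\times T)\subseteq\inv(T)\otimes\inv(T)$ directly via the classical Hilbert--Schmidt argument: an invariant kernel $F$ yields an operator $A_F$ commuting with the Koopman representation, the nonzero-eigenvalue eigenspaces of the compact positive operators $A_FA_F^*$ and $A_F^*A_F$ are finite-dimensional invariant subspaces, hence lie in $\kron(T)=\inv(T)$ by the first part, and the singular value decomposition of $A_F$ then exhibits $F$ as an element of $\inv(T)\otimes\inv(T)$ (using that $\inv(T)$ is conjugation-stable). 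This is correct and self-contained, at the cost of reproving in this special case the structural fact the paper outsources; your closing observation that amenability is not used is also accurate. Note, though, that the paper later needs the full relative statement $\kron(T_1|\mathscr{D}_1)\otimes\kron(T_2|\mathscr{D}_2)=\kron(T_1\times T_2|\mathscr{D}_1\otimes\mathscr{D}_2)$ (Lemma~\ref{lem:A-prod}), for which the compact-operator argument does not directly generalize, so the citation is not entirely avoidable in the larger scheme of the paper.
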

\begin{proof}
The inclusion $\inv(T) \subset \kron(T)$ is immediate.
For the reverse inclusion, consider a finite-dimensional, $T$-invariant subspace $H$ of $\lp^2(X,\mathscr{B},\mu)$.
It induces a finite-dimensional representation of $G$, which must be trivial because $G$ is WM.
Therefore $H \subset \inv(T)$.

The second assertion follows from the first and the fact that $\kron(T \times T) = \kron(T) \otimes \kron(T)$, which follows from \cite{MR0174705}.
\end{proof}

\begin{definition}
Let $T$ be a measure-preserving action of a \lcsca{} group $G$ on a probability space $(X,\mathscr{B},\mu)$ and let $\mathscr{D}$ be a $T$-invariant sub-$\sigma$-algebra of $\mathscr{B}$.
We can think of $\lp^2(X,\mathscr{B},\mu)$ as an $\lp^\infty(X,\mathscr{D},\mu)$ module.
Denote by $\kron(T|\mathscr{D})$ the closed subspace of $\lp^2(X,\mathscr{B},\mu)$ spanned by the closed, finite-rank, $T$-invariant $\lp^\infty(X,\mathscr{D},\mu)$ sub-modules.
\end{definition}

\begin{lemma}
\label{lem:A-prod}
Let $T_1$ and $T_2$ be measure-preserving actions of a \lcsca{} group $G$ on probability spaces $(X_1,\mathscr{B}_1,\mu_1)$ and $(X_2,\mathscr{B}_2,\mu_2)$ respectively.
Let $\mathscr{D}_1$ and $\mathscr{D}_2$ be $T_1$ and $T_2$ invariant sub-$\sigma$-algebras of $\mathscr{B}_1$ and $\mathscr{B}_2$ respectively.
Then $\kron(T_1|\mathscr{D}_1) \otimes \kron(T_2|\mathscr{D}_2) = \kron(T_1 \times T_2|\mathscr{D}_1 \times \mathscr{D}_2)$.
\end{lemma}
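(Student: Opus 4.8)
The plan is to relativise the argument behind Lemma~\ref{lem:I-prod} and prove the two inclusions separately. For $\kron(T_1|\mathscr{D}_1)\otimes\kron(T_2|\mathscr{D}_2)\subseteq\kron(T_1\times T_2|\mathscr{D}_1\times\mathscr{D}_2)$ one argues directly: if $M_1,M_2$ are closed, finite-rank, $T_i$-invariant $\lp^{\infty}(X_i,\mathscr{D}_i,\mu_i)$-submodules with generators $e_1,\dots,e_p$ and $f_1,\dots,f_q$ respectively, then the $\lp^{\infty}(X_1\times X_2,\mathscr{D}_1\times\mathscr{D}_2,\mu_1\otimes\mu_2)$-submodule generated by all products $m_1\otimes m_2$, $m_i\in M_i$, is already generated by the $pq$ vectors $e_i\otimes f_j$ and is plainly $T_1\times T_2$-invariant, so its closure is one of the submodules appearing in the definition of $\kron(T_1\times T_2|\mathscr{D}_1\times\mathscr{D}_2)$. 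Taking closed linear spans over all $M_1,M_2$ gives the inclusion.

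For the reverse inclusion I would first record a stability property: if $Q$ is an orthogonal projection of $\lp^{2}(X,\mathscr{B},\mu)$ commuting with multiplication by every function of $\lp^{\infty}(X,\mathscr{D},\mu)$ and with the $T$-action, then $Q\kron(T|\mathscr{D})\subseteq\kron(T|\mathscr{D})$, because $Q$ sends a finitely generated, $T$-invariant $\lp^{\infty}(X,\mathscr{D},\mu)$-submodule to one of no larger rank, still $T$-invariant, whose closure is again finite-rank, $T$-invariant, and contained in $\kron(T|\mathscr{D})$; continuity of $Q$ and the fact that $\kron(T|\mathscr{D})$ is the closed span of such submodules then finish it. Apply this on $X_1\times X_2$ to the commuting projections $Q_1=P_1\otimes I$ and $Q_2=I\otimes P_2$, where $P_i$ is the orthogonal projection of $\lp^{2}(X_i,\mathscr{B}_i,\mu_i)$ onto $\kron(T_i|\mathscr{D}_i)$: each $P_i$ commutes with multiplication by $\lp^{\infty}(X_i,\mathscr{D}_i,\mu_i)$ (being the projection onto a closed submodule) and with $T_i$, whence $Q_1,Q_2$ commute with multiplication by $\lp^{\infty}(X_1\times X_2,\mathscr{D}_1\times\mathscr{D}_2,\mu_1\otimes\mu_2)$ and with $T_1\times T_2$. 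So $H:=\kron(T_1\times T_2|\mathscr{D}_1\times\mathscr{D}_2)$ is invariant under $Q_1$ and $Q_2$; moreover $I-Q_1$ is again such a projection, so by the stability argument it maps every finite-rank $T_1\times T_2$-invariant submodule occurring in the definition of $H$ into $\mathrm{range}(I-Q_1)=\kron(T_1|\mathscr{D}_1)^{\perp}\otimes\lp^{2}(X_2,\mathscr{B}_2,\mu_2)$, and symmetrically for $I-Q_2$. Hence everything reduces to the claim that a closed, finite-rank, $T_1\times T_2$-invariant $\lp^{\infty}(X_1\times X_2,\mathscr{D}_1\times\mathscr{D}_2,\mu_1\otimes\mu_2)$-submodule contained in $\kron(T_1|\mathscr{D}_1)^{\perp}\otimes\lp^{2}(X_2,\mathscr{B}_2,\mu_2)$, or in $\lp^{2}(X_1,\mathscr{B}_1,\mu_1)\otimes\kron(T_2|\mathscr{D}_2)^{\perp}$, must be zero; granting it, $(I-Q_1)H=(I-Q_2)H=0$ and therefore $H\subseteq\mathrm{range}(Q_1)\cap\mathrm{range}(Q_2)=\kron(T_1|\mathscr{D}_1)\otimes\kron(T_2|\mathscr{D}_2)$.

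Proving this claim is, I expect, the main obstacle. When $\mathscr{D}_1$ and $\mathscr{D}_2$ are trivial it is exactly the identity $\kron(T_1\times T_2)=\kron(T_1)\otimes\kron(T_2)$ of \cite{MR0174705} used in Lemma~\ref{lem:I-prod}. For the general case I would disintegrate $\mu_1$ and $\mu_2$ over $\mathscr{D}_1$ and $\mathscr{D}_2$, realise $\lp^{2}(X_1\times X_2)$ as the corresponding direct integral of the $\lp^{2}$-spaces of the fibred products, observe that a finite-rank $T_1\times T_2$-invariant $\lp^{\infty}(\mathscr{D}_1\times\mathscr{D}_2)$-submodule is described by a finite-dimensional unitary cocycle over the product base, and run the argument of \cite{MR0174705} fibrewise; equivalently, one can invoke the Furstenberg--Zimmer dichotomy and deduce the whole lemma from the facts that a product of relatively compact extensions is relatively compact and a product of relatively weakly mixing extensions is relatively weakly mixing, so that the maximal relatively compact intermediate factor of $X_1\times X_2$ over $(X_1/\mathscr{D}_1)\times(X_2/\mathscr{D}_2)$ is the product of those of the $X_i$ over $X_i/\mathscr{D}_i$. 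The delicate point in either route is that the $G$-action moves the fibres of the disintegration, so the direct-integral decomposition is twisted by the cocycle of the base action and the compactness and weak-mixing estimates must be carried out uniformly over the base.
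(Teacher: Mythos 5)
Your first inclusion and your projection reduction are both sound: $Q_1=P_1\otimes I$ and $Q_2=I\otimes P_2$ do commute with multiplication by $\lp^{\infty}(X_1\times X_2,\mathscr{D}_1\times\mathscr{D}_2,\mu_1\otimes\mu_2)$ and with $T_1\times T_2$, so they and their complements preserve $H=\kron(T_1\times T_2|\mathscr{D}_1\times\mathscr{D}_2)$, and the lemma does reduce to showing that no nonzero closed, finite-rank, invariant submodule is contained in $\kron(T_1|\mathscr{D}_1)^{\perp}\otimes\lp^{2}(X_2,\mathscr{B}_2,\mu_2)$ (or its mirror image). But that claim is not a technical remainder -- it is equivalent, via your own stability property, to the orthogonality of $H$ to every elementary tensor $f_1\otimes f_2$ with $f_1\perp\kron(T_1|\mathscr{D}_1)$, which is the entire content of the hard inclusion. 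You do not prove it: you offer two candidate strategies (a fibrewise rerun of the absolute argument of \cite{MR0174705}, or the Furstenberg--Zimmer dichotomy) and you yourself flag the twisting of the disintegration by the base action as the unresolved difficulty. As written, the proposal therefore has a genuine gap at exactly the step where the work lies; the elaborate projection scaffolding only relocates the problem.

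The way the paper closes this gap is to invoke a ready-made characterization of the orthogonal complement of $\kron(T|\mathscr{D})$ (\cite[Theorem~4.7]{arxiv:1402.3843}), namely a relative weak mixing condition of the form $\uclim_{g}\|\condex{\bar{f}\cdot T^{g}f}{\mathscr{D}}\|_{2}=0$. Granting this, the needed orthogonality is a one-line computation on elementary tensors of bounded functions (which span a dense subspace): one has
\begin{equation*}
\condex{\overline{f_1\otimes f_2}\cdot (T_1\times T_2)^{g}(f_1\otimes f_2)}{\mathscr{D}_1\times\mathscr{D}_2}
=\condex{\bar{f_1}\cdot T_1^{g}f_1}{\mathscr{D}_1}\otimes\condex{\bar{f_2}\cdot T_2^{g}f_2}{\mathscr{D}_2},
\end{equation*}
and the $\lp^{2}$-norm of the right-hand side is dominated by $\|\condex{\bar{f_1}\cdot T_1^{g}f_1}{\mathscr{D}_1}\|_{2}\,\|f_2\|_{\infty}^{2}$, which tends to zero in uniform \cesaro{} mean whenever $f_1\perp\kron(T_1|\mathscr{D}_1)$. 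If you want to salvage your architecture, replace the disintegration sketch by this criterion; pursuing instead the fibrewise or Furstenberg--Zimmer route would require the relative product-of-extensions machinery for general \lcsca{} groups and arbitrary invariant sub-$\sigma$-algebras, a substantially larger undertaking than the lemma itself.
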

\begin{proof}
The inclusion $\subset$ follows from the definition.
For the reverse inclusion, note that if $f_1$ is orthogonal to $\kron(T_1|\mathscr{D}_1)$ or $f_2$ is orthogonal to $\kron(T_2|\mathscr{D}_2)$ then $f_1 \otimes f_2$ is orthogonal to $\kron(T_1 \times T_2|\mathscr{D}_1 \otimes \mathscr{D}_2)$ by the characterization (see \cite[Theorem~4.7]{arxiv:1402.3843}, for example) of the orthogonal complements of these spaces.
\end{proof}

The next definition is \cite[Definition~2.5]{arxiv:1309.6095}, which is based on a notion introduced in \cite{MR2539560}.

\begin{definition}
Let $T_1$ and $T_2$ be commuting, measure-preserving actions of a \lcsca{} group $G$ on a probability space $(X,\mathscr{B},\mu)$.
We say that $(T_1,T_2)$ is \define{magic} if $\kron(T_1|\inv(T_2)) = \inv(T_1) \vee \inv(T_2)$.
\end{definition}

\begin{lemma}
\label{lem:product-magic}
If $G$ is a \lcscWMa{} group and $(T_1,T_2)$ is magic, then $(T_1 \times T_1,T_2 \times T_2)$ is also magic.
\end{lemma}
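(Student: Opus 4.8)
The plan is to verify directly the equality defining magicness of $(T_1\times T_1,T_2\times T_2)$, namely
\[
\kron(T_1\times T_1 | \inv(T_2\times T_2)) = \inv(T_1\times T_1)\vee\inv(T_2\times T_2),
\]
by transporting the magic hypothesis for $(T_1,T_2)$ through Lemmas~\ref{lem:I-prod} and~\ref{lem:A-prod}. First I would note that $T_1\times T_1$ and $T_2\times T_2$ are commuting measure-preserving actions of $G$ on $(X\times X,\mathscr{B}\otimes\mathscr{B},\mu\otimes\mu)$, and that $\inv(T_2)$ --- identified as usual with the corresponding sub-$\sigma$-algebra of $T_2$-invariant sets --- is $T_1$-invariant, since $T_1$ and $T_2$ commute. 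Hence $\inv(T_2)\otimes\inv(T_2)$ is a $(T_1\times T_1)$-invariant sub-$\sigma$-algebra, so the objects $\kron(T_1\times T_1 | \cdots)$ below are well defined.

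The computation then proceeds as follows. By Lemma~\ref{lem:I-prod} applied to $T_2$ we have $\inv(T_2\times T_2)=\inv(T_2)\otimes\inv(T_2)$. Feeding this into Lemma~\ref{lem:A-prod}, taken with both actions equal to $T_1$ and both sub-$\sigma$-algebras equal to $\inv(T_2)$, gives
\[
\kron(T_1\times T_1 | \inv(T_2\times T_2)) = \kron(T_1 | \inv(T_2))\otimes\kron(T_1 | \inv(T_2)).
\]
The magic hypothesis says $\kron(T_1 | \inv(T_2))=\inv(T_1)\vee\inv(T_2)$, so the right-hand side equals $(\inv(T_1)\vee\inv(T_2))\otimes(\inv(T_1)\vee\inv(T_2))$. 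Since the ambient space lies in every sub-$\sigma$-algebra, a rectangle $A\times B$ with $A\in\inv(T_1)$ and $B\in\inv(T_2)$ equals $(A\times X)\cap(X\times B)$ with $A\times X\in\inv(T_1)\otimes\inv(T_1)$ and $X\times B\in\inv(T_2)\otimes\inv(T_2)$; this, with the three symmetric statements and the evident reverse inclusion, yields the distributivity identity
\[
(\inv(T_1)\vee\inv(T_2))\otimes(\inv(T_1)\vee\inv(T_2)) = (\inv(T_1)\otimes\inv(T_1))\vee(\inv(T_2)\otimes\inv(T_2)).
\]
Applying Lemma~\ref{lem:I-prod} once more, to $T_1$ and to $T_2$, identifies the right-hand side with $\inv(T_1\times T_1)\vee\inv(T_2\times T_2)$, which is precisely the magic condition for $(T_1\times T_1,T_2\times T_2)$.

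I do not expect a genuine obstacle here: each step is short and essentially formal. The only care needed is bookkeeping --- reading $\inv(\cdot)$ and $\kron(\cdot | \cdot)$ consistently as factors, so that $\vee$ and $\otimes$ denote joins and tensor products of $\sigma$-algebras and the magic condition is read as in \cite{arxiv:1309.6095}, and checking that the hypotheses of Lemmas~\ref{lem:I-prod} and~\ref{lem:A-prod} genuinely apply, in particular that $G$ is WM and that $\inv(T_2)$ is $T_1$-invariant. If anything is the ``hard'' part, it is only stating the $\sigma$-algebra identification and the elementary distributivity identity cleanly; the underlying content is routine.
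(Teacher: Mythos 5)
Your argument is correct and is essentially the paper's own proof: Lemma~\ref{lem:I-prod} to identify $\inv(T_2\times T_2)$, Lemma~\ref{lem:A-prod} to split the relative Kronecker factor of the product, the magic hypothesis, the distributivity of $\otimes$ over $\vee$, and Lemma~\ref{lem:I-prod} again. The only difference is that you spell out the distributivity identity and the $T_1$-invariance of $\inv(T_2)$, which the paper leaves implicit.
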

\begin{proof}
Applying Lemma~\ref{lem:I-prod} and then Lemma~\ref{lem:A-prod}, we have
\begin{equation*}
\kron(T_1 \times T_1|\inv(T_2 \times T_2)) = \kron(T_1|\inv(T_2)) \otimes \kron(T_1|\inv(T_2))
\end{equation*}
so the conclusion follows upon using the fact that $(T_1,T_2)$ is magic, noting that
\begin{equation*}
(\inv(T_1) \vee \inv(T_2)) \otimes (\inv(T_1) \vee \inv(T_2)) = (\inv(T_1) \otimes \inv(T_1)) \vee (\inv(T_2) \otimes \inv(T_2))
\end{equation*}
and further use of Lemma~\ref{lem:I-prod}.
\end{proof}

Given a left Reiter sequence $\Phi$ in $\lp^{1}(G,\haar)$ and a measurable map $u : g \mapsto u_g$ from $G$ to $\mathbb{C}$, write
\begin{equation*}
\clim_{g \to \Phi} u_g = \lim_{N \to \infty} \int \Phi_N(g) u_g \intd\haar(g)
\end{equation*}
if this limit exists.
Write $\uclim_{g} u_{g} = u$ if $\clim_{g \to \Phi} u_{g} = u$ for every left Reiter sequence $\Phi$,
write $\dlim_{g\to\Phi} u_{g} = u$ if $\clim_{g\to\Phi} |u_g - u| = 0$,
and write $\udlim_g u_g = u$ if $\uclim_g |u_g - u| = 0$.
Write $\inv_1$ for $\inv(T_1)$, $\inv_2$ for $\inv(T_2)$, and $\inv_{12}$ for $\inv(T_1T_2)$.

\begin{theorem}
\label{thm:wmDlim}
If $G$ is a \lcscWMa{} group, $(T_1,T_2)$ is magic, and $(g_1,g_2) \mapsto T_1^{g_1} T_2^{g_2}$ is an ergodic action of $G \times G$, then
\begin{equation*}
\begin{aligned}
&
\udlim_g \int f_0 \cdot T_1^g f_1 \cdot T_1^g T_2^g f_2\intd\mu\\
=
&
\int \condex{f_0}{\inv_1 \vee \inv_{12}} \cdot \condex{f_1}{\inv_1 \vee \inv_2} \cdot \condex{f_2}{\inv_2 \vee \inv_{12}} \intd\mu
\end{aligned}
\end{equation*}
for any $f_0,f_1,f_2$ in $\lp^\infty(X,\mathscr{B},\mu)$.
\end{theorem}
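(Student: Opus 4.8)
The plan is to run the standard ``characteristic factor'' argument for a degree-two nonconventional average, using that $G$ is WM to force all the Kronecker-type factors that arise down to the corresponding spaces of invariant functions. Write $I(f_{0},f_{1},f_{2})$ for the expression on the left and $I_{N}=I_{N,\Phi}$ for its \cesaro{} average against $\Phi_{N}$, where $\Phi$ is a left Reiter sequence; then $|I_{N}|\le\|f_{0}\|_{\infty}\|f_{1}\|_{\infty}\|f_{2}\|_{\infty}$ and $I_{N}$ is multilinear. It therefore suffices to prove: \textbf{(a)} $\lim_{N}I_{N}=0$, uniformly in $\Phi$, whenever one of $\condex{f_{0}}{\inv_{1}\vee\inv_{12}}$, $\condex{f_{1}}{\inv_{1}\vee\inv_{2}}$, $\condex{f_{2}}{\inv_{2}\vee\inv_{12}}$ vanishes; and \textbf{(b)} the asserted formula (including existence of the $\udlim$) holds when each $f_{i}$ is measurable with respect to the corresponding sub-$\sigma$-algebra. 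Indeed, \textbf{(a)} then shows that $I_{N}(f_{0},f_{1},f_{2})$ and $I_{N}(\condex{f_{0}}{\inv_{1}\vee\inv_{12}},\condex{f_{1}}{\inv_{1}\vee\inv_{2}},\condex{f_{2}}{\inv_{2}\vee\inv_{12}})$ have the same limit, uniformly in $\Phi$, and on the projected functions \textbf{(b)} evaluates it to the right-hand side.

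For \textbf{(a)} I would apply the van der Corput lemma for uniform \cesaro{} averages over amenable groups (\cite{arxiv:1309.6095}; the estimate is left-translation invariant, which is exactly what the uniformity over left Reiter sequences supplies). Put $a_{g}=T_{1}^{g}f_{1}\cdot T_{1}^{g}T_{2}^{g}f_{2}$, so $I(f_{0},f_{1},f_{2})=\udlim_{g}\int f_{0}\cdot a_{g}\intd\mu$. Running van der Corput on $(a_{g})$, using that $T_{1}$ and $T_{2}$ commute and that $\udlim_{g}T_{2}^{g}(\cdot)=\condex{\cdot}{\inv_{2}}$ by the mean ergodic theorem, the double average telescopes to a bound
\begin{equation*}
\| \udlim_{g}a_{g}\|_{2}^{2}\ \lesssim\ \udlim_{h}\int \condex{T_{1}^{h}f_{1}\cdot\overline{f_{1}}}{\inv_{2}}\cdot\overline{\condex{(T_{1}T_{2})^{h}f_{2}\cdot\overline{f_{2}}}{\inv_{2}}}\intd\mu,
\end{equation*}
and Cauchy--Schwarz bounds the right side by the product of the degree-two box seminorms of $f_{1}$ (relative to $T_{1}$ and the invariant $\sigma$-algebra $\inv_{2}$) and of $f_{2}$ (relative to $T_{1}T_{2}$ and $\inv_{2}$). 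The null space of the first is $\kron(T_{1}\mid\inv_{2})^{\perp}=(\inv_{1}\vee\inv_{2})^{\perp}$ because $(T_{1},T_{2})$ is magic; the null space of the second is $\kron(T_{1}T_{2}\mid\inv_{2})^{\perp}$, which equals $(\inv_{12}\vee\inv_{2})^{\perp}$ once one knows the relabelled pair $(T_{1}T_{2},T_{2})$ is magic --- a consequence of the magic hypothesis through the magic calculus of \cite{arxiv:1309.6095} together with Lemmas~\ref{lem:I-prod} and \ref{lem:A-prod} (and, where one passes to Cartesian squares inside the estimate, Lemma~\ref{lem:product-magic}). This settles the $f_{1}$ and $f_{2}$ cases. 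For $f_{0}$ one first rewrites $I$, using the measure-preserving maps $T_{1}^{-g}$ and then $T_{2}^{-g}$ inside the integral, as $\udlim_{g}\int\bigl((T_{1}T_{2})^{-g}f_{0}\cdot T_{2}^{-g}f_{1}\bigr)\cdot f_{2}\intd\mu$, and runs van der Corput on the family $(T_{1}T_{2})^{-g}f_{0}\cdot T_{2}^{-g}f_{1}$; the relevant seminorm is that of $f_{0}$ relative to $T_{1}T_{2}$ and $\inv_{1}$, whose null space is $(\inv_{12}\vee\inv_{1})^{\perp}$ by the magicness of $(T_{1}T_{2},T_{1})$, obtained the same way.

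For \textbf{(b)}, having reduced to $f_{i}$ measurable with respect to the three factors, I would decompose further --- by multilinearity and density --- into the case $f_{0}=\phi_{0}\chi_{0}$, $f_{1}=\phi_{1}\psi_{1}$, $f_{2}=\psi_{2}\chi_{2}$ with each $\phi$ in $\lp^{\infty}(\inv_{1})$, each $\psi$ in $\lp^{\infty}(\inv_{2})$, each $\chi$ in $\lp^{\infty}(\inv_{12})$. Since $T_{1}^{g}\phi_{1}=\phi_{1}$, $(T_{1}T_{2})^{g}\chi_{2}=\chi_{2}$, and $T_{2}^{g}\psi_{2}=\psi_{2}$, the integrand collapses to $(\phi_{0}\phi_{1}\chi_{0}\chi_{2})\cdot T_{1}^{g}(\psi_{1}\psi_{2})$, whence $I=\int(\phi_{0}\phi_{1}\chi_{0}\chi_{2})\cdot\condex{\psi_{1}\psi_{2}}{\inv_{1}}\intd\mu$ by the mean ergodic theorem for $T_{1}$. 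What remains is to identify this with $\int f_{0}f_{1}f_{2}\intd\mu$, i.e.\ to show that the $\inv_{1}$-orthogonal part of $\psi_{1}\psi_{2}$ is annihilated against $\overline{\chi_{0}\chi_{2}}$ after multiplication by the $\inv_{1}$-measurable factor $\phi_{0}\phi_{1}$. This is where the magic hypothesis is used decisively (it makes $\inv_{1}\vee\inv_{2}$ a relative product over $\inv_{1}\cap\inv_{2}$, hence over the constants), together with the ergodicity of $(g_{1},g_{2})\mapsto T_{1}^{g_{1}}T_{2}^{g_{2}}$ (which trivialises $\inv_{1}\cap\inv_{2}$, $\inv_{1}\cap\inv_{12}$, $\inv_{2}\cap\inv_{12}$ and pins down how $\inv_{12}$ sits relative to $\inv_{1}$ and $\inv_{2}$) and Lemma~\ref{lem:I-prod} on the product system to split invariant factors of squares. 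The main obstacle is precisely this: first pinning the box-seminorm null spaces to the invariant joins rather than to genuinely larger relative Kronecker factors, and then carrying out the structured evaluation --- the van der Corput estimates, the mean ergodic theorem, and the multilinear reductions are all routine, and the WM hypothesis enters only through Lemmas~\ref{lem:I-prod} and \ref{lem:product-magic}.
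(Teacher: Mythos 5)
Your overall architecture --- project each $f_i$ onto its join of invariant algebras, kill the remainder by a van der Corput argument, evaluate the structured term --- is the standard one and is close in spirit to the paper's, but two steps have genuine gaps. The first is your reduction \textbf{(a)}: proving $\lim_N I_N=0$ uniformly over all left Reiter sequences is a $\uclim$ statement, and $\uclim_g w_g=0$ for every Reiter sequence does \emph{not} imply $\udlim_g w_g=0$ (for a nontrivial character $\chi$ of an abelian amenable group one has $\uclim_g\chi(g)=0$ while $|\chi|\equiv 1$; dynamically, an eigenfunction contribution to the correlation sequence is the same obstruction). To control a $\udlim$ one must bound $\uclim_g|w_g|^2$, which is the analogous triple average on the product system $(X\times X,\mu\otimes\mu)$ applied to $f_i\otimes\bar f_i$. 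This is precisely why the paper restates the three orthogonality conditions for $f_i\otimes f_i$ via Lemma~\ref{lem:I-prod} and needs Lemma~\ref{lem:product-magic} to know the squared pair is still magic before invoking \cite[Corollary~3.6]{arxiv:1309.6095}. Your parenthetical nod to Cartesian squares does not repair the stated logic, and your van der Corput computation is run on the wrong object; moreover the magicness of the relabeled pairs $(T_1T_2,T_2)$ and $(T_1T_2,T_1)$, on which your identification of the box-seminorm null spaces rests, is asserted rather than derived from the definition of magic.

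The second gap is that part \textbf{(b)} stops exactly where the work is. After the collapse to $\udlim_g\int(\phi_0\phi_1\chi_0\chi_2)\,T_1^g(\psi_1\psi_2)\intd\mu$ you need the $\udlim$ (not merely the $\uclim$ supplied by the mean ergodic theorem) to exist --- fixable via $\kron(T_1)=\inv(T_1)$ from Lemma~\ref{lem:I-prod} and the splitting $\lp^2=\kron\oplus\wm$ --- and, crucially, the identity $\int\phi\chi\,\condex{\psi}{\inv_1}\intd\mu=\int\phi\chi\psi\intd\mu$ for $\phi\in\lp^\infty(\inv_1)$, $\chi\in\lp^\infty(\inv_{12})$, $\psi\in\lp^\infty(\inv_2)$. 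That identity is exactly the nontrivial joint independence of the three invariant algebras that the magic hypothesis is supposed to encode, and you only assert that ``magic plus ergodicity'' delivers it. The paper sidesteps this computation entirely: by \cite[Lemma~4.4]{arxiv:1309.6095} the function $g\mapsto\int\condex{f_0}{\inv_1\vee\inv_{12}}\cdot T_1^g\condex{f_1}{\inv_1\vee\inv_2}\cdot T_1^gT_2^g\condex{f_2}{\inv_2\vee\inv_{12}}\intd\mu$ is almost periodic, hence constant because $G$ is WM, and its value at $g=\idG$ is the right-hand side. Without either that citation or a proof of the independence identity, your evaluation of the structured term is incomplete.
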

\begin{proof}
By \cite[Lemma 4.4]{arxiv:1309.6095} and our ergodicity assumptions the function
\begin{equation*}
g \mapsto \int \condex{f_0}{\inv_1 \vee \inv_{12}} \cdot T_1^g \condex{f_1}{\inv_1 \vee \inv_2} \cdot T_1^g T_2^g \condex{f_2}{\inv_2 \vee \inv_{12}} \intd\mu
\end{equation*}
is almost periodic.
Since $G$ is WM, it is in fact constant.
Thus it suffices to prove that
\begin{equation}
\label{eqn:zeroDlimForWm}
\udlim_{g} \int f_{0} \cdot T_{1}^{g} f_{1} \cdot T_{1}^g T_2^{g} f_{2} \intd\mu = 0
\end{equation}
for any $f_0,f_1,f_2$ in $\lp^\infty(X,\mathscr{B},\mu)$ satisfying one of the following conditions:
\begin{enumerate}
\item $f_{0} \perp \inv_1 \vee \inv_{12} \iff f_{0}\otimes f_{0} \perp \inv(T_1 \times T_1) \vee \kron(T_1T_2 \times T_1T_2)$
\item $f_{1} \perp \inv_1 \vee \inv_2 \iff f_{1}\otimes f_{1} \perp \inv(T_{1}\times T_{1}) \vee \inv(T_{2}\times T_{2})$
\item $f_{2} \perp \inv_2 \vee \inv_{12} \iff f_{2}\otimes f_{2} \perp \inv(T_{2}\times T_{2}) \vee \kron(T_1T_2 \times T_1 T_2)$
\end{enumerate}
where the equivalences all follow from Lemma~\ref{lem:I-prod}.
But \eqref{eqn:zeroDlimForWm} is equivalent to
\begin{equation*}
\uclim_g \int (f_0 \otimes f_0) (T_1 \times T_1)^g (f_1 \otimes f_1) (T_1T_2 \times T_1T_2)^g (f_2 \otimes f_2)\intd(\mu \otimes \mu) = 0,
\end{equation*}
which is true under any of the above conditions by Lemma~\ref{lem:product-magic} and \cite[Corollary~3.6]{arxiv:1309.6095}.
\end{proof}

\begin{corollary}
\label{cor:UDlim-erg}
Let $G$ be a \lcscWMa{} group and let $T_1,T_2$ be commuting $G$-actions on a probability space $(X,\mathscr{B},\mu)$ such that the induced $G \times G$ action $(g_1,g_2) \mapsto T_1^{g_1}T_2^{g_2}$ is ergodic.
Then
\begin{equation*}
\udlim_{g} \int f \cdot T_{1}^{g}f \cdot T_1^g T_2^{g} f \intd\mu
\geq
\left( \int f \intd\mu \right)^4
\end{equation*}
for any $0 \le f \le 1$ in $\lp^\infty(X,\mathscr{B},\mu)$.
\begin{proof}
By \cite[Corollary 4.6]{arxiv:1309.6095} the system $(X,\mathscr{B},\mu,T_{1},T_{2})$ admits an ergodic magic extension which is denoted by the same symbols.
Lifting $f$ to this extension, Theorem~\ref{thm:wmDlim} and \cite[Lemma 1.6]{MR2794947} combined yield
\begin{equation*}
\begin{aligned}
&
\udlim_g \int f \cdot T_1^g f \cdot T_1^g T_2^g f\intd\mu\\
=&
\int \condex{f}{\inv_1 \vee \inv_{12}} \condex{f}{\inv_1 \vee \inv_2} \condex{f}{\inv_2 \vee \inv_{12}} \intd\mu\\
\geq&
\int f \condex{f}{\inv_1 \vee \inv_{12}} \condex{f}{\inv_1 \vee \inv_2} \condex{f}{\inv_2 \vee \inv_{12}} \intd\mu
\geq \left( \int f \intd\mu \right)^4
\end{aligned}
\end{equation*}
as desired.
\end{proof}
\end{corollary}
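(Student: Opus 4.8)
The plan is to reduce to the setting of Theorem~\ref{thm:wmDlim}, apply that theorem, and then close with a measure-theoretic positivity estimate. The statement of Theorem~\ref{thm:wmDlim} requires the pair $(T_1,T_2)$ to be magic, which is not part of our hypotheses, so the first step is a reduction: I would invoke \cite[Corollary~4.6]{arxiv:1309.6095} to replace $(X,\mathscr{B},\mu,T_1,T_2)$ by an ergodic magic extension, denoted by the same symbols, and let $\tilde f$ be the pullback of $f$; then $0\le\tilde f\le1$ and $\int\tilde f\intd\mu=\int f\intd\mu$. Because the factor map is measure preserving and equivariant for $T_1$ and for $T_2$, one has
\begin{equation*}
\int \tilde f\cdot T_1^g\tilde f\cdot T_1^g T_2^g\tilde f\intd\mu=\int f\cdot T_1^g f\cdot T_1^g T_2^g f\intd\mu
\end{equation*}
for every $g$, so it suffices to prove the asserted lower bound on the extension; from now on $f$ denotes $\tilde f$.

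On the extension the $G\times G$-action $(g_1,g_2)\mapsto T_1^{g_1}T_2^{g_2}$ is still ergodic, so Theorem~\ref{thm:wmDlim} applies with $f_0=f_1=f_2=f$ and yields in particular that the $\udlim$ exists and
\begin{equation*}
\udlim_g \int f\cdot T_1^g f\cdot T_1^g T_2^g f\intd\mu=\int \condex{f}{\inv_1\vee\inv_{12}}\,\condex{f}{\inv_1\vee\inv_2}\,\condex{f}{\inv_2\vee\inv_{12}}\intd\mu.
\end{equation*}
This is the place where the WM hypothesis is genuinely used: inside Theorem~\ref{thm:wmDlim} it is what collapses an a priori only almost periodic function of $g$ to a constant.

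It then remains to bound the right-hand side below by $\left(\int f\intd\mu\right)^4$, a purely measure-theoretic assertion about the three $T$-invariant sub-$\sigma$-algebras $\inv_1\vee\inv_{12}$, $\inv_1\vee\inv_2$, $\inv_2\vee\inv_{12}$. Since the three conditional expectations are non-negative and $0\le f\le1$, one trivially has
\begin{equation*}
\int \condex{f}{\inv_1\vee\inv_{12}}\,\condex{f}{\inv_1\vee\inv_2}\,\condex{f}{\inv_2\vee\inv_{12}}\intd\mu\ge\int f\cdot\condex{f}{\inv_1\vee\inv_{12}}\,\condex{f}{\inv_1\vee\inv_2}\,\condex{f}{\inv_2\vee\inv_{12}}\intd\mu,
\end{equation*}
and then I would apply the positivity lemma \cite[Lemma~1.6]{MR2794947} to the product of $f$ with these three conditional expectations to get the lower bound $\left(\int f\intd\mu\right)^4$; its hypothesis is met because the invariant $\sigma$-algebras $\inv_1$, $\inv_2$, $\inv_{12}$ are in sufficiently general position, which is a consequence of the ergodicity of the $G\times G$-action on the magic extension. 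Chaining the three displays and transporting the conclusion back through the reduction step finishes the argument.

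I expect the only delicate point to be the reduction step: one must check that passing to the magic extension leaves the quantity whose $\udlim$ we compute unchanged, so that the magic hypothesis of Theorem~\ref{thm:wmDlim} can be imported for free. This is immediate from equivariance and measure preservation, but it is the hinge of the whole argument — everything genuinely substantial is already encapsulated in Theorem~\ref{thm:wmDlim} (the identification of characteristic factors for the pattern $(1,T_1,T_1T_2)$) and in the cited positivity lemma.
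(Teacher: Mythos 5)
Your proposal is correct and follows the paper's proof essentially verbatim: pass to an ergodic magic extension via \cite[Corollary 4.6]{arxiv:1309.6095}, lift $f$, apply Theorem~\ref{thm:wmDlim} with $f_0=f_1=f_2=f$, insert the factor $f\le 1$ to pass from $\int \condex{f}{\cdot}\condex{f}{\cdot}\condex{f}{\cdot}\intd\mu$ to $\int f\,\condex{f}{\cdot}\condex{f}{\cdot}\condex{f}{\cdot}\intd\mu$, and finish with \cite[Lemma 1.6]{MR2794947}. The only point where you are slightly more verbose than the paper is the (correct but routine) verification that lifting to the extension preserves the integrals in question.
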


We are now in a position to prove Theorem~\ref{mainTheorem:WM:lcsc}.

\begin{proof}[Proof of Theorem~\ref{mainTheorem:WM:lcsc}]
Let $E$ be a subset of $G$ that is substantial with respect to some two-sided Reiter sequence $\Phi$.
Put $g_0 = \idG$, $E_0 = E$ and $\Psi_0 = \Phi$.
Let also $K_{1}\subset K_{2}\subset \cdots$ be an exhaustion of $G$ by compact sets given by Lemma~\ref{lem:lcsc-union-of-compact}.
We construct inductively a sequence $g_i$ in $G$, a sequence $E_{i}$ of measurable subsets of $G$ with $g_{i+1} \in E_i$, and a sequence $\Psi_i$ of two-sided Reiter sequences in $G$ such that
\begin{equation}
\label{eqn:twoSidedIpInductionSet}
E_{i+1} = g_{i+1}^{-1} E_{i} \cap E_{i} \cap E_{i} g_{i+1}^{-1},
\end{equation}
the set $E_i$ is substantial with respect to $\Psi_i$ and $g_{i}\not\in K_{i}$ for all $i \ge 0$.
Assume by induction that for some $i \ge 0$, we have $g_{j}$, $E_j$ and $\Psi_j$ defined for all $0 \le j \le i$ and having the desired properties.

Since $E_i$ is substantial with respect to $\Psi_i$ we can find a symmetric, open neighborhood $U$ of the identity in $G$ and a measurable subset $W$ of $G$ with $\upperdens_{\Psi_i}(W) > 0$ such that $E_i \supset UUWUU$.
Put $S = UWU$.
Since $S$ is substantial, Part~\ref{thm:lcscfcp:nu} of Theorem~\ref{thm:lcscfcp} yields commuting $G$ actions $L$ and $R$ on a compact, metric probability space $(X,\mathscr{B},\nu)$, a two-sided Reiter sequence $\Psi_{i+1}$ in $G$, and a non-negative, continuous function $\xi$ on $X$ such that the $G \times G$ action induced by $L$ and $R$ is ergodic, $\int \xi \intd\nu \ge \upperdens_{\Psi_i}(W)$ and
\begin{equation*}
\upperdens_{\Psi_{i+1}}(S g^{-1} \cap S \cap g^{-1} S) \ge \int L^g \xi \cdot \xi \cdot R^{g^{-1}} \xi \intd\nu = \int \xi \cdot R^g \xi \cdot R^g L^g \xi \intd\nu
\end{equation*}
for all $g$ in $G$.

By Theorem~\ref{thm:wmDlim} with $T_1 = R$, $T_2 = L$ and $f = \xi$ we obtain
\begin{equation*}
\udlim_g \int \xi \cdot R^g \xi \cdot R^g L^g \xi \intd\nu \ge \upperdens_{\Psi_{i}}(W)^4
\end{equation*}
so for any $\epsilon > 0$ the set
\begin{equation*}
F_i = \{ g \in G : \upperdens_{\Psi_{i+1}} (S g^{-1} \cap S \cap g^{-1}S) \ge \upperdens_{\Psi_i}(W)^4 - \epsilon \}
\end{equation*}
has density $1$ with respect to any left Reiter sequence.
Thus $\dens_{\Psi_i}(F_i) = 1$ so in particular the intersection $F_i \cap E_i$ is not relatively compact.
Choose $g_{i+1}$ in $F_i \cap E_i \setminus K_{i+1}$ and define $E_{i+1}$ by \eqref{eqn:twoSidedIpInductionSet}.
To see that $E_{i+1}$ is substantial with respect to $\Psi_{i+1}$, note that
\begin{equation*}
E_i g_{i+1}^{-1} \cap E_i \cap g_{i+1}^{-1} E_i \supset V(S g_{i+1}^{-1} \cap S \cap g_{i+1}^{-1} S)V
\end{equation*}
where $V$ is the symmetric neighborhood $U \cap g_{i+1}U g_{i+1}^{-1} \cap g_{i+1}^{-1} U g_{i+1}$ of the identity.
This concludes the inductive construction.

It remains to prove that $\fp(g_n)$ is contained in $E$.
Note that $g_n$ belongs to $E$ for each $n$ and that \eqref{eqn:twoSidedIpInductionSet} implies
\begin{equation*}
E_i = \cap \{ \inc_\alpha(g_n)^{-1} E \dec_\beta(g_n)^{-1} \,:\, \alpha,\beta \subset \{1,\dots,i\} \textrm{ and } \alpha \cap \beta = \emptyset \}
\end{equation*}
for each $i$ in $\mathbb{N}$ by induction on $i$.
Thus $\inc_\alpha(g_n) g_{i+1} \dec_\beta(g_n)$ belongs to $E$ for any disjoint subsets $\alpha,\beta$ of $\{ 1,\dots,i\}$ as desired.
\end{proof}

We remark that Theorem~\ref{mainTheorem:WM:lcsc} immediately implies the following partition result.

\begin{corollary}
For any measurable partition $C_1 \cup \cdots \cup C_r$ of a \lcsc{}, WM, amenable group one can find $1 \le i \le r$ such that, for any open neighborhood $U$ of the identity, the set $U C_i U$ contains a two-sided finite products set.
\end{corollary}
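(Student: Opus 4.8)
The plan is to deduce this from Theorem~\ref{mainTheorem:WM:lcsc} by a density pigeonhole argument, with essentially no new ideas required. First I would fix an arbitrary two-sided Reiter sequence $\Phi$ in $\lp^1(G,\haar)$; one exists because $G$ is \lcsca{}. Since each $\Phi_N$ is non-negative with integral $1$, we have $\upperdens_\Phi(G) = 1$. Because $G = C_1 \cup \cdots \cup C_r$ with each $C_i$ measurable, and $\limsup$ is subadditive, $1 = \upperdens_\Phi(G) \le \sum_{i=1}^r \upperdens_\Phi(C_i)$, so there is an index $i$ with $\upperdens_\Phi(C_i) \ge 1/r > 0$. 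I would fix this $i$ once and for all; crucially it is chosen before any neighborhood $U$ is specified.

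Next, given any open neighborhood $U$ of $\idG$, I would pass to the symmetric open neighborhood $U' = U \cap U^{-1}$ of $\idG$, which satisfies $U' \subseteq U$. The set $U' C_i U'$ then matches Definition~\ref{def:substantial} of a substantial set, taking $W = C_i$ (which has positive upper density with respect to the fixed two-sided Reiter sequence $\Phi$) and the symmetric open neighborhood $U'$ of the identity. Theorem~\ref{mainTheorem:WM:lcsc} therefore applies and yields a sequence $g_n \to \infty$ with $\fp(g_n) \subseteq U' C_i U'$. Since $U' C_i U' \subseteq U C_i U$, the set $U C_i U$ contains the two-sided finite products set $\fp(g_n)$ as well, which is what we wanted.

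There is no genuine obstacle here, as all the work has been done in Theorem~\ref{mainTheorem:WM:lcsc}; the only point meriting a moment's care is that the index $i$ must be selected uniformly in $U$. This is exactly what the density pigeonhole provides: once $C_i$ has positive upper density with respect to a single fixed Reiter sequence $\Phi$, the set $U C_i U$ is substantial for \emph{every} neighborhood $U$ simultaneously, and the conclusion for each $U$ follows from one application of the main theorem.
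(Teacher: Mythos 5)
Your proposal is correct and is essentially the argument the paper has in mind: the paper offers no separate proof, calling the corollary an immediate consequence of Theorem~\ref{mainTheorem:WM:lcsc}, and the intended reasoning is exactly your pigeonhole step (some $C_i$ has positive upper density with respect to a fixed two-sided Reiter sequence, so $UC_iU$ is substantial for every $U$). Your care in fixing $i$ before $U$ and in symmetrizing $U$ is precisely the right bookkeeping, and since WM groups are unimodular the Reiter and \folner{} formulations of substantiality agree, so nothing further is needed.
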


Also, in the proof of Theorem~\ref{mainTheorem:WM:lcsc}, there are in fact many choices for each $g_i$ because $F_i \cap E_i$ has positive density with respect to a two-sided Reiter sequence.

The following example shows that Theorem~\ref{mainTheorem:WM:lcsc} does not extend to general left \folner{} sequences, even in the discrete case.

\begin{example}
\label{eg:twoSidedNecessary}
Let $A(\N)$ be the finite alternating group on $\mathbb{N}$.
As in Example~\ref{ex:wm:alt} we can view $A(\N)$ as the increasing union of the alternating groups $A_n=A(\{1,\dots,n\})$.
For $n\geq 4$ let $h_{n}:=(1,n)(2,3)\in A_{n}$ and $\Phi_{n}:=A_{n-1}h_{n}$.
This is a left \folner{} sequence, and we see from
\[
\Phi_{n} \subset \{ \sigma\in A_{n} : \sigma(1)=n \}
\]
that the sets $\Phi_{n}$ are pairwise disjoint.
Let $E:=\cup_{n\geq 5}\Phi_{n}$, so that $\dens_{\Phi}(E)=1$.
Suppose that there exists a sequence $(g_{k})$ with $\fpd(g_{k}) \subset E$.
We will show the following statement by induction on $n$.
\begin{claim}
Suppose that for some $\alpha\in\mathcal{F}$ and $n\geq 4$ we have $\dec_{\alpha}(g_{k})\in\Phi_{n}$.
Then there exists $\beta\in\mathcal{F}$ such that $\dec_{\beta}(g_{k})\in\Phi_{4}$.
\end{claim}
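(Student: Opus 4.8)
The plan is to prove the Claim by induction on $n$, though the inductive step will actually show that its hypothesis cannot occur once $n\geq 5$, so there the Claim holds vacuously. First I would record what is needed about the sets $\Phi_{n}$. Since $\Phi_{n}=A_{n-1}h_{n}$ with $h_{n}\in A_{n}$ and $A_{n-1}\subseteq A_{n}$, every $\sigma\in\Phi_{n}$ lies in $A_{n}$, hence is supported on $\{1,\dots,n\}$, and, as observed above, satisfies $\sigma(1)=n$. Consequently the sets $\Phi_{n}$ ($n\geq 4$) are pairwise disjoint, $\idG\notin E$, $\Phi_{4}\cap E=\emptyset$, and every $\sigma\in E$ satisfies $\sigma(1)\geq 5$ and is supported on $\{1,\dots,\sigma(1)\}$, with $\sigma(1)$ itself lying in the support since $\sigma(1)\ne 1$. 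I would also record the one consequence of $g_{k}\to\infty$ that matters: each $A_{m}$ is a finite subset of $A(\N)$, and a permutation $\sigma\in E$ lies in $A_{m}$ exactly when $\sigma(1)\leq m$, so $g_{k}\to\infty$ forces $g_{k}(1)\to\infty$.

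The base case $n=4$ of the induction is immediate: take $\beta=\alpha$. For the inductive step, suppose $\dec_{\alpha}(g_{k})\in\Phi_{n}$ with $\alpha=\{k_{1}<\dots<k_{r}\}\in\mathcal{F}$ and $n\geq 5$; I will derive a contradiction. Using $g_{k}(1)\to\infty$, pick $k'>k_{r}$ with $g_{k'}(1)>n$, and put $\beta'=\alpha\cup\{k'\}$, so that $\dec_{\beta'}(g_{k})=g_{k'}\dec_{\alpha}(g_{k})$. As an element of $\fpd(g_{k})\subseteq E$, $\dec_{\beta'}(g_{k})$ lies in some $\Phi_{m}$, hence is supported on $\{1,\dots,m\}$ and satisfies $\dec_{\beta'}(g_{k})(1)=m$.

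I would then compute $m$ in two ways. Because $\dec_{\alpha}(g_{k})(1)=n$,
\[
m=\dec_{\beta'}(g_{k})(1)=g_{k'}\bigl(\dec_{\alpha}(g_{k})(1)\bigr)=g_{k'}(n),
\]
and since $n<g_{k'}(1)$ while $g_{k'}\in E$ permutes $\{1,\dots,g_{k'}(1)\}$, this gives $m=g_{k'}(n)\leq g_{k'}(1)$. On the other hand $\dec_{\alpha}(g_{k})\in A_{n}$ fixes the integer $g_{k'}(1)>n$, so
\[
\dec_{\beta'}(g_{k})\bigl(g_{k'}(1)\bigr)=g_{k'}\bigl(\dec_{\alpha}(g_{k})(g_{k'}(1))\bigr)=g_{k'}\bigl(g_{k'}(1)\bigr)\ne g_{k'}(1),
\]
the final inequality holding by injectivity of $g_{k'}$ and $g_{k'}(1)\ne 1$. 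Thus $\dec_{\beta'}(g_{k})$ moves $g_{k'}(1)$, forcing $g_{k'}(1)\leq m$. Combining the two bounds yields $m=g_{k'}(1)$, whence $g_{k'}(n)=g_{k'}(1)$ and $n=1$, contradicting $n\geq 5$. This completes the induction.

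It remains to apply the Claim. Since $g_{1}=\dec_{\{1\}}(g_{k})\in E=\bigcup_{l\geq 5}\Phi_{l}$, we have $g_{1}\in\Phi_{n}$ for some $n\geq 5\geq 4$, so the Claim furnishes $\beta$ with $\dec_{\beta}(g_{k})\in\Phi_{4}$; but $\dec_{\beta}(g_{k})\in\fpd(g_{k})\subseteq E$ and $\Phi_{4}\cap E=\emptyset$, a contradiction. Hence no sequence $(g_{k})$ with $\fpd(g_{k})\subseteq E$ exists, so $E$ contains no decreasing finite products set even though $\dens_{\Phi}(E)=1$. I expect the one genuine idea to be the realization that the fruitful move is to \emph{enlarge} $\alpha$ by a far-out index $k'$ — exploiting $g_{k}\to\infty$ to make $g_{k'}(1)$ exceed $n$ — rather than to shrink $\alpha$; after that the bookkeeping with supports and injectivity of $g_{k'}$ is routine.
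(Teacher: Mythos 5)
Your proof is correct, but it is structured quite differently from the paper's. The paper proves the claim by a genuine downward induction on $n$: it multiplies $\dec_{\alpha}(g_{k})$ on the left by $g_{i}$ for an \emph{arbitrary} $i>\max\alpha$ and splits into three cases according to the level $j$ with $g_{i}\in\Phi_{j}$ (namely $j>n+1$, $j\leq n$, $j=n+1$); only the first case yields an outright contradiction, while the other two push the product down into some $\Phi_{m}$ with $m\leq n$ so that the inductive hypothesis applies. You instead exploit $g_{k}\to\infty$ — which, via your observation that $\sigma\in E$ lies in $A_{m}$ exactly when $\sigma(1)\leq m$, forces $g_{k}(1)\to\infty$ — to \emph{choose} the new index $k'$ with $g_{k'}(1)>n$, landing directly in the contradiction case: the level $m$ of $g_{k'}\dec_{\alpha}(g_{k})$ must equal $g_{k'}(n)\leq g_{k'}(1)$ and simultaneously be at least $g_{k'}(1)$ because $g_{k'}(1)$ is moved, forcing $n=1$. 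Your claim is thus vacuously true for $n\geq 5$ and the induction is only a formality. The computation at the heart of your contradiction is essentially the paper's case $j>n+1$, so the two arguments share their key mechanism; what your version buys is brevity (one case instead of three, no genuine descent), at the cost of invoking $g_{k}\to\infty$. That hypothesis is legitimately available, being part of Definition~\ref{def:finProdSets}, but the paper's case analysis avoids it, so the paper's argument in fact rules out $\fpd(g_{k})\subseteq E$ for arbitrary sequences, not only those escaping to infinity.
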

Since the assumption is clearly satisfied for some $n\geq 5$ and some $\alpha\in\mathcal{F}$, we obtain a contradiction.
\begin{proof}[Proof of the claim]
For $n=4$ the conclusion holds with $\beta=\alpha$.
Suppose now that the claim is known to hold up to some $n$ and assume $h:=\dec_{\alpha}(g_{k})\in\Phi_{n+1}$.
Let $i>\max\alpha$, let $j$ be such that $g_{i}\in\Phi_{j}$, and consider $D_{\alpha\cup\{i\}}(g_{k}) = g_{i} h$.
By the assumption this is an element of $E$.
Consider now the following cases.
\begin{description}
\item[$j>n+1$] We have $g_{i}h(1)=g_{i}(n+1)<j$ and $g_{i}h(j)=g_{i}(j)<j$, so that $g_{i}h\not\in E$, contradicting the assumption.
\item[$j\leq n$] The conclusion follows from the inductive assumption.
\item[$j=n+1$] In this case we have $g_{i}h(1)=g_{i}(n+1)<n+1$, so that $g_{i}h\not\in\Phi_{n+1}$.
Since $g_{i}h\in E\cap A_{n+1}$, it follows that $g_{i}h \in\Phi_{m}$ for some $m\leq n$, and the conclusion again follows from the inductive hypothesis.\qedhere
\end{description}
\end{proof}
\end{example}

In view of \cite[Theorem 2.4]{MR2561208} this implies that the set $E$ in this example has density zero with respect to any right \folner{} sequence.
As discussed in the introduction, this answers a question from \cite{arxiv:1307.0767}.

For the proof of Theorem~\ref{mainTheorem:virtuallyWM:lcsc} we need a tool that allows us to deduce that a set having positive density in a group has positive density in some coset of any cocompact subgroup.

\begin{lemma}
\label{lem:folner-seq-restriction-to-subgroup}
Let $G$ be a \lcsca{} group with a left (respectively two-sided) \folner{} sequence $\Phi$.
Let $H$ be a closed, normal, cocompact subgroup of $G$.
Then $\Phi$ has a subsequence, denoted by the same symbol, such that for almost every $z\in G/H$ and any $x\in z$ the sequence $N\mapsto x^{-1}(\Phi_{N}\cap z)$ is a left (respectively two-sided) \folner{} sequence in $H$.

If $E$ is a measurable subset of $G$ such that $\upperdens_{\Phi}(E) > 0$, then for a positive measure set of $z\in G/H$ we have $\upperdens_{\Phi\cap z}(E) > 0$.
\end{lemma}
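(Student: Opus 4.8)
The plan is to fibre $G$ over the compact group $Q:=G/H$ and reduce both assertions to Fubini computations; the only genuinely quantitative input is an averaging argument that upgrades the \folner{} property on $G$ to $L^{1}$-control of the fibrewise densities. For the set-up: since $H$ is closed, normal and cocompact, $Q$ is a second countable compact group; let $m$ be its Haar probability measure and $\pi\colon G\to Q$ the quotient map. Being bi-invariant, $m$ is in particular invariant for the $G$-action on $Q$, so Weil's integration formula applies, giving a normalization of the Haar measure $\haar_{H}$ of $H$ such that, for every Borel $A\subseteq G$, $\haar(A)=\int_{Q}\haar_{z}(A\cap z)\intd m(z)$, where $\haar_{z}$ is the push-forward of $\haar_{H}$ under $h\mapsto xh$ for any $x\in z$ (well defined by left-invariance). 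Fix a Borel section $s\colon Q\to G$ of $\pi$ (it exists since $G$ is second countable) and write $\phi_{N,z}:=s(z)^{-1}(\Phi_{N}\cap z)\subseteq H$; it suffices to prove the \folner{} statement for $x=s(z)$, since left-translating a \folner{} sequence by a fixed element keeps it \folner{}. For $g\in H$, normality gives $g\Phi_{N}\cap z=g(\Phi_{N}\cap z)$ and $\Phi_{N}g\cap z=(\Phi_{N}\cap z)g$, whence
\[
\haar_{z}\big((\Phi_{N}\symdiff g\Phi_{N})\cap z\big)=\haar_{H}\big(\phi_{N,z}\symdiff (s(z)^{-1}gs(z))\phi_{N,z}\big)
\quad\text{and}\quad
\haar_{z}\big((\Phi_{N}g\symdiff \Phi_{N})\cap z\big)=\haar_{H}\big(\phi_{N,z}g\symdiff \phi_{N,z}\big);
\]
note that $g\mapsto s(z)^{-1}gs(z)$ is a topological automorphism of $H$, hence preserves $\haar_{H}$-null sets.

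The key estimate is that $f_{N}/\haar(\Phi_{N})\to 1$ in $L^{1}(Q,m)$, where $f_{N}(z):=\haar_{z}(\Phi_{N}\cap z)$ (a measurable function with $\int_{Q}f_{N}\intd m=\haar(\Phi_{N})$). Let $\rho_{N}$ be the probability measure on $Q$ with density $f_{N}/\haar(\Phi_{N})$ against $m$, i.e.\ $\rho_{N}=\pi_{*}(\haar(\Phi_{N})^{-1}\haar|_{\Phi_{N}})$; the claim is that the total variation distance of $\rho_{N}$ from $m$ tends to $0$. Fix a compact $C\subseteq G$ with $\pi(C)=Q$. For $\bar g\in Q$ pick $g\in C$ with $\pi(g)=\bar g$; left-invariance of $\haar$ shows that $\rho_{N}$ and its translate $\bar g_{*}\rho_{N}$ differ in total variation by at most $\haar(\Phi_{N}\symdiff g\Phi_{N})/\haar(\Phi_{N})\le\varepsilon_{N}$, where $\varepsilon_{N}:=\sup_{g\in C}\haar(\Phi_{N}\symdiff g\Phi_{N})/\haar(\Phi_{N})\to 0$ by the left \folner{} property. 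Since $m$ is bi-invariant, averaging the translates $\bar g_{*}\rho_{N}$ over $\bar g\in Q$ against $m$ recovers $m$, so by the triangle inequality $\rho_{N}$ is within $\varepsilon_{N}$ of $m$, which is the claim.

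For the \folner{} assertion, fix a compact $L\subseteq H$. By the displayed identities and Weil's formula, $\int_{L\times Q}\haar_{z}((\Phi_{N}\symdiff g\Phi_{N})\cap z)\intd(\haar_{H}\times m)=\int_{L}\haar(\Phi_{N}\symdiff g\Phi_{N})\intd\haar_{H}(g)\le\haar_{H}(L)\haar(\Phi_{N})\sup_{g\in L}\haar(\Phi_{N}\symdiff g\Phi_{N})/\haar(\Phi_{N})$, so $(g,z)\mapsto\haar_{z}((\Phi_{N}\symdiff g\Phi_{N})\cap z)/\haar(\Phi_{N})\to 0$ in $L^{1}(L\times Q)$, and similarly for $(\Phi_{N}g\symdiff\Phi_{N})$ in the two-sided case. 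Exhausting $H$ by compact sets via Lemma~\ref{lem:lcsc-union-of-compact} and extracting a subsequence by a diagonal argument, incorporating also the key estimate, I obtain a subsequence, still denoted $\Phi$, and an $m$-null set $Z_{0}\subseteq Q$ such that for $z\notin Z_{0}$ one has $f_{N}(z)/\haar(\Phi_{N})\to 1$ and, for $\haar_{H}$-a.e.\ $g\in H$, $\haar_{z}((\Phi_{N}\symdiff g\Phi_{N})\cap z)/\haar(\Phi_{N})\to 0$ (and, in the two-sided case, $\haar_{z}((\Phi_{N}g\symdiff\Phi_{N})\cap z)/\haar(\Phi_{N})\to 0$); the passage from ``a.e.\ on $H\times Q$'' to ``a.e.\ $z$, then a.e.\ $g$'' is the Fubini theorem for null sets. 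Dividing by $f_{N}(z)/\haar(\Phi_{N})\to 1$, using the displayed identities and that $g\mapsto s(z)^{-1}gs(z)$ preserves null sets, we find that for $z\notin Z_{0}$ the sequence $N\mapsto\phi_{N,z}$ (all of whose terms are eventually of positive $\haar_{H}$-measure, since $\haar_{H}(\phi_{N,z})=f_{N}(z)$) satisfies the defining \folner{} ratio for $\haar_{H}$-a.e.\ group element; since it suffices to verify \eqref{eq:def-left-folner} (and its right analog) for almost every group element, as remarked after Definition~\ref{def:density}, $N\mapsto\phi_{N,z}$ is a left (resp.\ two-sided) \folner{} sequence in $H$.

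For the density assertion, put $g_{N}(z):=\haar_{z}(E\cap\Phi_{N}\cap z)$, so that $\int_{Q}g_{N}\intd m=\haar(E\cap\Phi_{N})$ and $\upperdens_{\Phi\cap z}(E)=\limsup_{N}g_{N}(z)/f_{N}(z)$. Assume for contradiction that $g_{N}(z)/f_{N}(z)\to 0$ for $m$-a.e.\ $z$. Passing to a subsequence along which $\haar(E\cap\Phi_{N})/\haar(\Phi_{N})\to c:=\upperdens_{\Phi}(E)$ and writing $g_{N}/\haar(\Phi_{N})=u_{N}h_{N}$ with $u_{N}:=g_{N}/f_{N}\in[0,1]$ and $h_{N}:=f_{N}/\haar(\Phi_{N})$, we have $u_{N}\to 0$ $m$-a.e., while $(h_{N})$ is uniformly integrable, being $L^{1}$-convergent by the key estimate. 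Then $\int_{Q}u_{N}h_{N}\intd m\le M\int_{Q}u_{N}\intd m+\sup_{N}\int_{\{h_{N}>M\}}h_{N}\intd m$; the first term tends to $0$ by dominated convergence and the second tends to $0$ as $M\to\infty$, so $\haar(E\cap\Phi_{N})/\haar(\Phi_{N})=\int_{Q}u_{N}h_{N}\intd m\to 0$, forcing $c=0$, a contradiction. Hence $\upperdens_{\Phi\cap z}(E)>0$ on a set of positive $m$-measure. The crux of the whole argument is the key estimate $f_{N}/\haar(\Phi_{N})\to 1$ in $L^{1}$: without it the fibrewise densities could concentrate on $m$-small sets and neither assertion would be reliable; everything else is bookkeeping, namely the diagonal extraction managing which subsequence serves which purpose and tracking the harmless conjugation $g\mapsto s(z)^{-1}gs(z)$ in the left-translation identity.
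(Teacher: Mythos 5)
Your proof is correct, and the overall strategy coincides with the paper's: disintegrate $\haar$ over $G/H$ via Weil's formula, control the fibre measures $\haar_z(\Phi_N)$ and the fibrewise symmetric differences in $L^1$ of the quotient, extract a diagonal subsequence to upgrade to almost-everywhere convergence, and invoke the remark that the \folner{} condition need only be verified for almost every group element. The one genuinely different ingredient is your proof of the key estimate $\haar_z(\Phi_N)/\haar(\Phi_N)\to 1$ in $L^1(G/H)$: you view $\pi_*\big(\haar(\Phi_N)^{-1}\haar|_{\Phi_N}\big)$ as a probability measure on the compact quotient, bound its total-variation distance to each of its translates by the \folner{} defect over a compact set of coset representatives, and average over the quotient against its Haar measure to conclude; the paper instead uses the pointwise inequality $\haar_x(\Phi_N\symdiff g\Phi_N)\ge|\haar_x(\Phi_N)-\haar_{g^{-1}x}(\Phi_N)|$, integrates, and exploits invariance of the quotient Haar measure. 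Your route is arguably cleaner and makes the role of compactness of $G/H$ (unimodularity and finiteness of its Haar measure) more transparent, at the cost of introducing a section and a total-variation formalism. For the final density assertion you argue by contradiction via uniform integrability of $h_N=\haar_z(\Phi_N)/\haar(\Phi_N)$, whereas the paper applies a reverse Fatou argument directly; these are essentially the same computation, and your version has the small virtue of making explicit the domination issue that Fatou quietly requires. Minor bookkeeping points (discarding the finitely many $N$ with $\haar_H(\phi_{N,z})=0$, ordering the subsequence extractions so that the density subsequence is taken before or within the diagonal argument, and the fact that conjugation by $s(z)$ preserves Haar null sets) are all handled or harmless.
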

\begin{proof}
We will prove the result for two-sided \folner{} sequences, the proof for left \folner{} sequences is nearly identical.

Consider for each $x$ in $G$ the measure $\haar_x$ defined by $\haar_x(E) = \haar_H(x^{-1}E)$ for all Borel subsets $E$ of $xH$.
This is a measure on the coset $xH$ and is invariant under left and right translation by $H$.
The measure $\haar_x$ only depends on the coset of $H$ that $x$ represents.
We can therefore define, for each $z$ in $G/H$, a measure $\haar_z$ on $G$ by $\haar_z = \haar_x$ for any $x \in z$.
After suitably normalizing the left Haar measure $\haar_{H}$, we obtain
\begin{equation*}
\int f \intd\haar = \iint f \intd\haar_{z} \intd\mu(z),
\end{equation*}
where $\mu$ denotes the Haar measure on $G/H$, for any $f \in \contc(G)$ from \cite[Theorem~2.49]{MR1397028}.
In fact, this holds for any $f$ in $\lp^1(G,\haar)$.

We have
\begin{equation*}
\haar_x(\Phi_N \symdiff g \Phi_N) \ge |\haar_x(\Phi_N) - \haar_{g^{-1}x}(\Phi_N)|
\end{equation*}
for all $N$ in $\N$ and all $g,x$ in $G$.
By the \folner{} property it follows that
\begin{equation*}
\int \frac{|\haar_{z}(\Phi_{N})-\haar_{g^{-1}z}(\Phi_{N})|}{\haar(\Phi_{N})} \intd\mu(z)
\leq \frac{\haar(\Phi_{N} \symdiff g\Phi_{N})}{\haar(\Phi_{N})}
\to 0
\end{equation*}
as $N \to \infty$.
Invariance of $\mu$ implies
\begin{equation*}
1 = \int \frac{\haar_z(\Phi_N)}{\haar(\Phi_N)} \intd\mu(z) = \int \frac{\haar_{zw}(\Phi_N)}{\haar(\Phi_N)} \intd\mu(z)
\end{equation*}
for every $w \in G/H$, so
\begin{equation*}
\int \left| 1 - \frac{\haar_{w}(\Phi_{N})}{\haar(\Phi_{N})} \right| \intd\mu(w)
\leq
\iint \frac{|\haar_{zw}(\Phi_N) - \haar_{w}(\Phi_N)|}{\haar(\Phi_N)} \intd\mu(z) \intd\mu(w) \to 0
\end{equation*}
as $N\to\infty$ by the dominated convergence theorem.
Passing to a subsequence, we may assume
\begin{equation}
\label{eq:fo-seq-meas-on-fibers}
\haar_{z}(\Phi_{N})/\haar(\Phi_{N}) \to 1
\text{ for almost every } z \in G/H.
\end{equation}

By the \folner{} condition we also have $\haar(\Phi_{N} \symdiff g\Phi_{N}h)/\haar(\Phi_{N}) \to 0$ locally uniformly for $g,h\in H$.
Let $K_{n}$ be a countable collection of relatively compact open sets that covers $H\times H$.
We have
\begin{equation*}
\int_{K_{n}} \int \frac{\haar_{z}(\Phi_{N} \symdiff g\Phi_{N}h)}{\haar(\Phi_{N})} \intd\mu(z) \intd(\haar_H \times \haar_H)(g,h) \to 0
\end{equation*}
for every $n$.
Passing to a subsequence, we may assume that the convergence holds pointwise almost everywhere for every $n$.

By Fubini's theorem it follows that for almost every $z \in G/H$ we have 
\begin{equation*}
\frac{\haar_{z}(\Phi_{N} \symdiff g\Phi_{N}h)}{\haar(\Phi_{N})} \to 0
\end{equation*}
for almost every pair $(g,h)$ in $H\times H$.
Combined with \eqref{eq:fo-seq-meas-on-fibers}, this implies
\begin{equation*}
\frac{\haar_{z}(\Phi_{N} \symdiff g\Phi_{N}h)}{\haar_{z}(\Phi_{N})} \to 0
\end{equation*}
with the same quantifiers.
By the remarks at the start of Section \ref{sec:folnerConditions}, $\Phi_{N}\cap z$ is a two-sided \folner{} sequence in the two-sided $H$ torsor $z$ for almost every $z$ in $G/H$.
It follows that $x^{-1}(\Phi_{N}\cap z)$ is a two-sided \folner{} sequence in $H$ for any $x$ in $z$.

Suppose now that $\upperdens_{\Phi}(E)>0$.
Before performing the above construction we may pass to a subsequence of $\Phi$ such that $\dens_{\Phi}(E)>0$.
This ensures $\dens_{\Phi}(E)>0$ after passing to further subsequences as required by the construction. 
Now the Fatou lemma justifies
\begin{equation*}
\begin{aligned}
0
< \upperdens_{\Phi}(E)
&=
\limsup_{N \to \infty} \frac{1}{\haar(\Phi_{N})} \int \ind{\Phi_{N}} \cdot \ind{E} \intd\haar_G\\
&=
\limsup_{N \to \infty} \int \frac{1}{\haar(\Phi_{N})} \int \ind{\Phi_{N}} \cdot \ind{E} \intd\haar_{z} \intd\mu(z)\\
&\leq
\int \limsup_{N \to \infty} \frac{1}{\haar(\Phi_{N})} \int \ind{\Phi_{N}} \cdot \ind{E} \intd\haar_{z} \intd\mu(z)\\
&=
\int \limsup_{N \to \infty} \frac{1}{\haar_{z}(\Phi_{N})} \int \ind{\Phi_{N}} \cdot \ind{E} \intd\haar_{z} \intd \mu(z) = \int \upperdens_{\Phi \cap z}(E) \intd\mu(z)
\end{aligned}
\end{equation*}
so $\upperdens_{\Phi \cap z}(E) > 0$ for a positive measure set of cosets.
\end{proof}

We now prove Theorem~\ref{mainTheorem:virtuallyWM:lcsc}.

\begin{proof}[Proof of Theorem~\ref{mainTheorem:virtuallyWM:lcsc}]
Let $H\leq G$ be a closed normal subgroup such that $H$ is WM and $G/H$ is compact.
For the second part it suffices to take a nonnull compact subset of $G/H$ that does not contain the equivalence class of $\idG$ and pull it back to $G$; the fact that this is not a set of recurrence will be witnessed by the left translation action of $G$ on $G/H$.

For the first part note that $H$ is unimodular by Lemma~\ref{lem:WMunimod}, so that $G$ is also unimodular.
Hence, for a given Borel subset $E \subset G$ of positive upper Banach density, the density is realized along some two-sided \folner{} sequence $\Phi$ by Proposition~\ref{prop:slicing}.

It remains to show that $E$ has positive upper density in one of the cosets of $H$.
By Lemma~\ref{lem:folner-seq-restriction-to-subgroup} the sequence $\Phi$ is a two-sided \folner{} sequence in almost every coset of $H$ and $\upperdens_{\Phi \cap z}(E) > 0$ for a positive measure set of cosets.
Pick a coset $z$ such that this holds and $\Phi \cap z$ is a two-sided \folner{} sequence in $z$.
Now if $UEU$ is a substantial set in $G$ and $x \in z$, then $x^{-1}UEU \cap H \supset (x^{-1}Ux \cap H) (x^{-1}E \cap H) (U\cap H)$ is a substantial set in $H$, and we can apply Theorem~\ref{mainTheorem:WM:lcsc} to it.
\end{proof}

\begin{proof}[Proof of Theorem~\ref{thm:hindman-G0-cocompact}]
As in the proof of Theorem~\ref{mainTheorem:virtuallyWM:lcsc}, shifting $S$ (on the left or on the right) we may assume $S=U^{9}EU^{2}$, where $E=UE_{0}$ has positive upper density with respect to some left \folner{} sequence $\Phi$ in $G$, $E_{0}\subset G_{0}$ has positive upper density in $G_{0}$ with respect to the \folner{} sequence $N\mapsto\Phi_{N}\cap G_{0}$, and $U$ is a symmetric neighborhood of the identity in $G$.

Put $g_0 = \idG$, $S_0 = S$, $U_{0}=U$, and $\Psi_0 = \Phi$.
Let also $K_{1}\subset K_{2}\subset \cdots$ be an exhaustion of $G$ by compact sets given by Lemma~\ref{lem:lcsc-union-of-compact}.
We construct inductively
\begin{enumerate}
\item a nested sequence of subsequences $\Psi_{i}$ of $\Phi$
\item a decreasing sequence $S_{i}$ of substantial subsets of $G$ such that $S_{i} \supseteq U_{i}^{10}E_{i}U_{i}^{2}$ with $\upperdens_{\Psi_{i}\cap G_{0}}(E_{i}) > 0$ and symmetric relatively compact neighborhoods of identity $U_{i}$ in $G$, and
\item a sequence $g_i$ in $G$ with $g_{i+1} \in S_i\setminus K_{i}$ such that
\begin{equation}
\label{eqn:oneSidedIpInductionSet}
S_{i+1} = g_{i+1}^{-1} S_{i} \cap S_{i}.
\end{equation}
\end{enumerate}
This suffices to conclude $\fpi(g_n) \subset S$.

Assume by induction that for some $i \ge 0$, we have $g_{j}$, $S_{j}$, $U_{j}$, and $E_j$ defined for all $0 \le j \le i$ and having the desired properties.
Applying the one-sided correspondence principle \cite[Theorem 1.1]{MR2561208} to $U_{i}(U_{i}E_{i})$, we obtain a measure-preserving action $T$ of $G$ on a separable probability space $(X,\mathscr{B},\mu)$ and a positive function $\xi$ in $\lp^\infty(X,\mathscr{B},\mu)$ such that
\begin{equation*}
\upperdens_\Phi(U_{i}^{2}E_{i} \cap g^{-1} U_{i}^{2}E_{i}) \ge \int \xi \cdot T^g \xi \intd\mu
\end{equation*}
for all $g$ in $G$.
Let $\epsilon = 10^{-1} | \int \xi \intd\mu |^{2}$.

Recall that
\[
\lp^{2}(X,\mathscr{B},\mu) = \kron(T) \oplus \wm(T),
\]
where $\wm(T)$ is the closed subspace consisting of the functions $f$ such that for every $\phi\in \lp^{\infty}(X)$ we have
\begin{equation}
\label{eq:wm-subspace}
\uclim_{g} \big| \int \phi \cdot T^{g}f \intd\mu \big| = 0.
\end{equation}
Applying \eqref{eq:wm-subspace} with $\phi=\xi$ and $f=\xi-\condex{\xi}{\kron(T)}$ we see that
\begin{equation}
\label{eqn:kronOrthoSmall}
\left| \int \xi \cdot T^{g}(\xi-\condex{\xi}{\kron(T)}) \intd\mu \right| < \epsilon
\end{equation}
for a set of $g\in G$ with density $1$.
Let $W\subset U_{i}$ be a neighborhood of identity such that
\begin{equation}
\label{eqn:kronShiftSmall}
\| \condex{\xi}{\kron(T)} - T^{g}\condex{\xi}{\kron(T)} \| < \epsilon
\end{equation}
for every $g\in W$.
Since almost periodic functions on $G$ are trivial on $G_0$, the above inequality holds for all $g\in WE_{i}$.
The latter set has positive upper density with respect to $\Phi$ in $G$, so that there is some $g\in WE_{i}\setminus K_{i}\overline{U_{i}}^{2}$ for which both inequalities \eqref{eqn:kronOrthoSmall} and \eqref{eqn:kronShiftSmall} hold.

For this $g$ the set $U_{i}^{2}E_{i} \cap g^{-1}U_{i}^{2}E_{i}$ has positive upper density in $G$.
Since the latter set is contained in $U_{i}^{2}G_{0}$, by Lemma~\ref{lem:folner-seq-restriction-to-subgroup} there exists $h\in U^{2}$ such that $h^{-1}(U_{i}^{2}E_{i} \cap g^{-1}U_{i}^{2}E_{i}) \cap G_{0}$ has positive upper density in $G_{0}$.
It follows that
\[
E_{i+1} := U^{4}_{i}E_{i} \cap g_{i+1}^{-1} U_{i}^{2}E_{i} \cap G_{0}
\]
has positive upper density in $G_{0}$ with $g_{i+1} = gh \in S_{i}\setminus K_{i}$.
Therefore $S_{i} \cap g_{i+1}^{-1} S_{i}$ is a substantial set of the same special form as $S_{i}$, and we can continue the induction.
\end{proof}

\section{Large sets in groups with von Neumann kernel not cocompact}
\label{sec:notvirtuallyWM}

In this section we extend Straus's example to \lcsca{} groups whose von Neumann kernel is not cocompact by proving Theorem~\ref{thm:straus-set}.
The construction involves the following approximate version of the monotone convergence theorem for the finitely additive measure defined by a \folner{} sequence.


\begin{lemma}
\label{lem:union-cofinite}
Let $G$ be a locally compact, second countable, amenable group and let $\Phi$ be a left, right, or two-sided \folner{} sequence in $G$.
Let $i \mapsto A_i$ be a sequence of Borel subset of $G$ such that $\dens_{\Phi}(A_{i})$ exists for every $i$, the sequence $i \mapsto \dens_{\Phi}(A_{i})$ is summable, and $\dens_{\Phi}(A_1 \cup \cdots \cup A_n)$ exists for every $n$.
Then there exist cocompact subsets $A'_{i}\subset A_{i}$ such that $\dens_{\Phi}(C) = \lim \dens_{\Phi}(A_1 \cup \cdots \cup A_n)$, where $C=\cup \{ A_i' : i \in \mathbb{N} \}$.
\end{lemma}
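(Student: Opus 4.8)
The plan is to obtain each $A_i'$ by deleting from $A_i$ a carefully chosen compact set, large enough that the tails $\bigcup_{i>n}A_i'$ have upper $\Phi$-density tending to $0$ as $n\to\infty$. First I would set $B_n=A_1\cup\cdots\cup A_n$ and note that $n\mapsto\dens_\Phi(B_n)$ is nondecreasing and bounded, so $L:=\lim_n\dens_\Phi(B_n)$ exists. I would reduce to the case that $G$ is not compact (the compact case being straightforward, since there $\dens_\Phi$ coincides with normalized Haar measure on every Borel set and $A_i'=A_i$ works). When $G$ is not compact, $\haar(\Phi_N)\to\infty$, so every relatively compact subset of $G$ is $\Phi$-null; hence deleting a relatively compact set from any set changes neither its upper nor its lower $\Phi$-density. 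In particular, for \emph{any} cocompact $A_i'\subset A_i$ we will automatically have $\dens_\Phi(A_i')=\dens_\Phi(A_i)$ and $\dens_\Phi(A_1'\cup\cdots\cup A_n')=\dens_\Phi(B_n)$.

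Next, for the construction: since $\haar(\Phi_N\cap A_i)/\haar(\Phi_N)\to\dens_\Phi(A_i)$, I would pick for each $i$ an index $M_i$, increasing and tending to infinity, with
\[
\frac{\haar(\Phi_N\cap A_i)}{\haar(\Phi_N)} < \dens_\Phi(A_i)+2^{-i} \quad\text{for all } N\ge M_i,
\]
and put $K_i:=\Phi_1\cup\cdots\cup\Phi_{M_i-1}$ (a compact set) and $A_i':=A_i\setminus K_i$, so that $A_i\setminus A_i'\subseteq K_i$ is relatively compact. The key feature of this choice is that $\Phi_N\cap A_i'=\emptyset$ whenever $N<M_i$. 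Consequently, for every $n$ and every $N$ only the finitely many indices $i>n$ with $M_i\le N$ contribute to $\Phi_N\cap\bigcup_{i>n}A_i'$, and each such contributes at most $\dens_\Phi(A_i)+2^{-i}$ after dividing by $\haar(\Phi_N)$; summing gives
\[
\frac{\haar\big(\Phi_N\cap\bigcup_{i>n}A_i'\big)}{\haar(\Phi_N)} \le \sum_{i>n}\big(\dens_\Phi(A_i)+2^{-i}\big) =: r_n
\]
for \emph{every} $N$, whence $\upperdens_\Phi\big(\bigcup_{i>n}A_i'\big)\le r_n$, and $r_n\to 0$ by summability of $i\mapsto\dens_\Phi(A_i)$.

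Finally I would assemble the two-sided bound for $C=\bigcup_i A_i'$. Writing $C=(A_1'\cup\cdots\cup A_n')\cup\bigcup_{i>n}A_i'$ and using finite subadditivity of $\upperdens_\Phi$,
\[
\upperdens_\Phi(C)\le\upperdens_\Phi(A_1'\cup\cdots\cup A_n')+r_n\le\dens_\Phi(B_n)+r_n\le L+r_n,
\]
and letting $n\to\infty$ yields $\upperdens_\Phi(C)\le L$; on the other hand $C\supseteq A_1'\cup\cdots\cup A_n'$ gives $\lowerdens_\Phi(C)\ge\dens_\Phi(A_1'\cup\cdots\cup A_n')=\dens_\Phi(B_n)$ for all $n$, hence $\lowerdens_\Phi(C)\ge L$. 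Therefore $\dens_\Phi(C)=L=\lim_n\dens_\Phi(A_1\cup\cdots\cup A_n)$, as required.

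I expect the only real obstacle to be the tail estimate in the middle step: $\upperdens_\Phi$ is not countably subadditive, so one cannot bound $\upperdens_\Phi(\bigcup_{i>n}A_i)$ by $\sum_{i>n}\dens_\Phi(A_i)$ directly, and indeed without the deletions $\bigcup_i A_i$ need not have any density at all. The removal of $K_i=\Phi_1\cup\cdots\cup\Phi_{M_i-1}$ is precisely the mechanism that makes the naive bound valid for the modified sets — it renders $A_i'$ invisible at all scales $N<M_i$ while leaving it controlled by $\dens_\Phi(A_i)+2^{-i}$ at scales $N\ge M_i$, producing a bound on the tail that is uniform in $N$. The remaining steps are routine manipulations built on the elementary observation that relatively compact sets are $\Phi$-null.
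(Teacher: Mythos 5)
Your proof is correct and follows essentially the same route as the paper's: remove the initial segment $\Phi_1\cup\cdots\cup\Phi_{M_i-1}$ from $A_i$ so that $\haar(\Phi_N\cap A_i')/\haar(\Phi_N)\le\dens_\Phi(A_i)+2^{-i}$ holds for \emph{all} $N$, bound the tail $\bigcup_{i>n}A_i'$ by the resulting summable series, and get the matching lower bound from the fact that relatively compact sets are $\Phi$-null in the non-compact case. The only differences from the paper's argument are cosmetic (a one-sided rather than two-sided density estimate, and making the nullity of compact sets explicit).
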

\begin{proof}
If $G$ is compact, then $\dens_{\Phi}=\haar$, and the conclusion holds with $A'_{i}=A_{i}$ by the monotone convergence theorem.
Hence we may assume that $G$ is not compact.

For each $i \in \N$ there is some $s_i$ in $\N$ such that
\begin{equation*}
\left| \frac{\haar( A_i \cap \Phi_N)}{\haar(\Phi_N)} - \dens_\Phi(A_i) \right| < \frac{1}{2^i}
\end{equation*}
whenever $N \ge s_i$.
We may assume that $s_{i+1} > s_i$ for all $i \in \N$.
We will show that the conclusion holds for $A'_i = A_i \setminus (\Phi_1 \cup \cdots \cup \Phi_{s_i})$.
Note that $\frac{\haar( A'_i \cap \Phi_N)}{\haar(\Phi_N)} < \dens_\Phi(A_i) + \frac{1}{2^i}$ for every $N$.

Define $B_n = A_1 \cup \cdots \cup A_n$ and $C_n = A'_1 \cup \cdots \cup A'_n$.
Let $\alpha = \lim \dens_\Phi(B_n)$.
Since $C_n \subset B_n$ and $B_n \setminus C_n$ is contained in a compact set we have $\dens_\Phi(C_n) = \dens_\Phi(B_n)$.
From $C_n \subset C_{n+1}$ it follows that $\lowerdens_\Phi(C) \ge \alpha$.
Thus it remains to show that $\upperdens_\Phi(C) \le \alpha$.
For every $J$ and every $N$ we have
\begin{align*}
\frac{\haar(C \cap \Phi_N)}{\haar(\Phi_N)}
& \le \frac{\haar(C_J \cap \Phi_N)}{\haar(\Phi_N)} + \sum_{i = J+1}^\infty \frac{\haar(A'_i \cap \Phi_N)}{(\Phi_N)}\\
& \le \frac{\haar(B_J \cap \Phi_N)}{\haar(\Phi_N)} + \sum_{i=J+1}^\infty \dens_\Phi(A_i) + \frac{1}{2^i}.
\end{align*}
Letting $N\to\infty$ we obtain
\[
\upperdens_{\Phi}(C)
\leq \dens_{\Phi}(B_{J}) + 2^{-J} + \sum_{i=J+1}^\infty \dens_\Phi(A_i).
\]
Now, letting $J\to\infty$, we obtain $\upperdens_{\Phi}(C) \leq \alpha$ as required.
\end{proof}

\begin{proof}[Proof of Theorem~\ref{thm:straus-set}]
Fix a left \folner{} sequence $\Phi$ in $G$ and $\epsilon > 0$.
Let $K_{1}\subset K_{2} \subset \cdots$ be an increasing sequence of compact sets whose union is $G$, as in Lemma~\ref{lem:lcsc-union-of-compact}.
For every $n$ let $f_{n}$ be the almost periodic function on $G$ given by Lemma~\ref{lem:slowly-varying-ap} with the compact set $K_{n}$ and $\epsilon/2^{n+10}$.
Let $X_{n} = \overline{ \{ \lmult_g f_{n} \,:\, g \in G \} }$, then $(X_{n},G)$ is a topologically transitive, equicontinuous topological dynamical system.
Therefore $(X_{n},G)$ is minimal and uniquely ergodic by Theorem~\ref{uniqueErgodicitySegal}.
Let $\mu_{n}$ be the unique invariant Borel probability measure on $(X_{n},G)$.
By minimality $\mu_n$ has full support.
Let $\epsilon/2^{n+5} < r_{n} < \epsilon/2^{n+4}$ be such that $\mu_{n}(\partial B_{r_{n}}(f_{n})) = 0$.
(Such $r_{n}$ exist because the boundaries are pairwise disjoint.)
Since the action $l$ is isometric on $X_{n}$ and the range of $f_{n}$ is $\epsilon/2^{n+10}$-dense in $[0,1]$, there are at least $2^{n+1}/\epsilon$ disjoint images of $U_{n}:=B_{r_{n}}(f_{n})$ in $X_{n}$, so $\mu_{n}(U_{n}) \leq \epsilon/2^{n+1}$.
Put $A_n = \ret_{U_n}(f_{n})$, so that $\dens_{\Phi}(A_{n}) = \mu_{n}(U_{n}) \leq \epsilon/2^{n+1}$ by Lemma~\ref{densityIsMeasure}.

In order to apply Lemma~\ref{lem:union-cofinite} we need to prove that
\begin{equation*}
B_n = A_1 \cup \cdots \cup A_n
\end{equation*}
has density for every $n$.
To do this consider the action $L$ of $G$ on $X_{1}\times\dots\times X_{n}$ induced by applying $l$ in each coordinate.
Let $Z$ be the orbit closure of $(f_{1},\dots, f_{n})$ under this action.
By Theorem~\ref{uniqueErgodicitySegal} the topological dynamical system $(Z,G)$ is minimal and uniquely ergodic.
We have
\begin{align*}
B_n
&= \cup_{i=1}^n \{ x \in G \,:\, \lmult_{x} f_{i} \in U_i \}\\
&= \{ x \in G \,:\, L_x(f_{1},\dots, f_{n}) \in \pi_1^{-1} U_1 \cup \cdots \cup \pi_n^{-1} U_n \}
\end{align*}
so $B_n$ is a set of return times in a uniquely ergodic dynamical system.
Let $\nu$ be the unique invariant probability measure on $(Z,G)$.
Each of the coordinate projections $\pi_i : Z \to X_{i}$ intertwines the actions $L$ and $l$.
Since $X_{i}$ is uniquely ergodic, this implies that $\pi_i \nu = \mu_{i}$ for each $i$.
Thus
\begin{equation*}
\nu( \partial (\pi_1^{-1} U_1 \cup \cdots \cup \pi_n^{-1} U_n)) \le \mu_{1}(\partial U_1) + \cdots + \mu_{n}(\partial U_n) = 0
\end{equation*}
and therefore $\dens_\Phi(B_n)$ exists by Lemma~\ref{densityIsMeasure}.

It follows from Lemma~\ref{lem:union-cofinite} that there exist cocompact subsets $A_{n}' \subset A_{n}$ whose union $C$ has density at most $\epsilon$.
We claim that the set $E:=G\setminus C$ satisfies the conclusion of the theorem.
Indeed, let $K$ be an arbitrary symmetric compact subset of $G$.
It suffices to show that the complement of $KEK$ is an $\SetRec^{*}$ set.

By construction we have $K\subset K_{n}$ for some $n$.
Put $D_n = G \backslash A_n$ and $D_n' = G \backslash A_n'$.
We have $E \subset D_n'$ so it suffices to prove that $G \setminus (K D_n' K)$ is $\SetRec^{*}$.
By Corollary~\ref{cor:cofin-pres-srs} this is equivalent to $G \setminus (K D_{n} K)$ being an $\SetRec^{*}$ set.
Now
\[
G \setminus (K D_{n} K)
=
\bigcap_{g,h\in K}
g A_{n} h
=
\bigcap_{g,h\in K}
\ret_{\lmult_{g} B_{r_{n}}(f_{n})}(\lmult_{h^{-1}} f_{n})
\supseteq
\ret_{B_{r_{n}/2}(f_{n})}(f_{n})
\]
since $f_{n}$ is $\epsilon/2^{n+10}$-invariant under $K$ and $G$ acts isometrically on $X_{n}$.
The latter set is $\SetRec^{*}$ by Lemma~\ref{lem:top-ret-is-sec-ret}.
\end{proof}

In Theorem~\ref{thm:straus-set} the set $E$ depends on the \folner{} sequence.
It is natural to ask whether it is possible for $E$ to satisfy the conclusion of Theorem~\ref{thm:straus-set} and have positive density with respect to all left \folner{} sequences.
To see that this is impossible we will show that any set satisfying the conclusion of Theorem~\ref{thm:straus-set} is not piecewise syndetic.
In view of Example~\ref{eg:fpiMeasRec} it suffices to show the following.

\begin{lemma}
\label{psContainsShiftedFP}
Let $G$ be a \lcsc{} group that is not compact.
Then for every piecewise-syndetic subset $P$ of $G$ there exists $k\in G$ such that for every neighborhood $U$ of $\idG$ the set $UkP$ contains a set of the form $\fpi(g_n)$ with $g_n \to \infty$.
\begin{proof}
Let $P$ be a piecewise-syndetic subset of $G$.
Fix a compact subset $K$ of $G$ such that $T=KP$ is thick.
Let $n \mapsto K_n$ be a sequence of increasing, compact subsets of $G$ that cover $G$ (see Lemma~\ref{lem:lcsc-union-of-compact}).

Using Lemma~\ref{lem:largeShiftsForThick} with $K = \{ \idG \}$, choose $g_1 \in G \setminus K_1$ such that $g_1 \in T$.
Assume by induction that we have found $g_1,\dots,g_n$ in $G$ such that $H = \{ \inc_\alpha(g_n) \,:\, \emptyset \ne \alpha \subset \{1,\dots,n\} \}$ is a subset of $T$ and $g_i \notin K_i$ for each $1 \le i \le n$.
By Lemma~\ref{lem:largeShiftsForThick} there is some $g_{n+1}$ in $G \setminus (H \cup \{\idG \})^{-1}K_{n+1}$ such that $(H \cup \{\idG \})g_{n+1} \subset T$.
It follows that $T$ contains $\fpi(g_n)$ and $\inc_\alpha(g_n) \to \infty$ as $\alpha\to\infty$.

For each finite set $\emptyset\neq\alpha\subset\N$ let $k_{\alpha}\in K$ be such that $k_{\alpha}^{-1}\inc_\alpha(g_n) \in P$.
Since $G$ is metrizable, by \cite[Theorem 1.3]{MR833409} there is a sub-IP-ring such that $\lim_{\alpha}k_{\alpha} = k$ exists.
In particular, for every symmetric neighborhood $U$ of $\idG$ we have $k_{\alpha}\in Uk$ for sufficiently large $\alpha$.
Hence $\inc_\alpha(g_n) \in k_{\alpha}P \subset UkP$ as required.
\end{proof}
\end{lemma}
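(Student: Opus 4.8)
The plan is to first extract a genuine increasing finite products set from a thick set controlled by $P$, and then to absorb the compact error introduced by piecewise-syndeticity into a single left shift $k$ by means of an IP-Ramsey argument.

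First I would use piecewise-syndeticity to fix a compact $K\subset G$ with $T:=KP$ thick, together with an exhaustion $K_{1}\subset K_{2}\subset\cdots$ of $G$ by compact sets as in Lemma~\ref{lem:lcsc-union-of-compact}. Then I would build a sequence $g_{n}$ recursively so that, writing $H_{n}=\{\inc_{\alpha}(g_{n}):\emptyset\ne\alpha\subset\{1,\dots,n\}\}$, one has $H_{n}\subset T$ and $g_{n}\notin K_{n}$ for every $n$. In the recursion step the set $H_{n}\cup\{\idG\}$ is finite, hence compact, so thickness of $T$ — in the sharpened form provided by Lemma~\ref{lem:largeShiftsForThick}, which is where non-compactness of $G$ is used — yields $g_{n+1}\in G\setminus(H_{n}\cup\{\idG\})^{-1}K_{n+1}$ with $(H_{n}\cup\{\idG\})g_{n+1}\subset T$; this gives $H_{n+1}\subset T$ and $g_{n+1}\notin K_{n+1}$. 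Consequently $\fpi(g_{n})\subset T=KP$, and by construction every finite product $\inc_{\alpha}(g_{n})$ lies outside $K_{\max\alpha}$, so $\inc_{\alpha}(g_{n})\to\infty$ as $\alpha\to\infty$.

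Next, for each nonempty finite $\alpha\subset\N$ choose $k_{\alpha}\in K$ with $k_{\alpha}^{-1}\inc_{\alpha}(g_{n})\in P$. The key point is that $K$, being a subset of the second countable space $G$, is a compact metric space, so the IP-Ramsey theorem \cite[Theorem~1.3]{MR833409} produces a sub-IP-ring of $\mathscr{F}$, generated by pairwise disjoint consecutive finite blocks $\beta_{1}<\beta_{2}<\cdots$, along which $k_{\alpha}$ converges, say $k_{\alpha}\to k\in K$. Setting $\tilde g_{m}:=\inc_{\beta_{m}}(g_{n})$, the increasing finite products of the $\tilde g_{m}$ are precisely the $\inc_{\alpha}(g_{n})$ for $\alpha$ in this sub-IP-ring, and $\tilde g_{m}\to\infty$ by the same escape-to-infinity property of the construction above. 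Finally, given any neighborhood $U$ of $\idG$ — which we may assume symmetric — convergence $k_{\alpha}\to k$ forces $k_{\alpha}\in Uk$ for all sufficiently large $\alpha$ in the sub-IP-ring, whence $\inc_{\alpha}(g_{n})=k_{\alpha}\bigl(k_{\alpha}^{-1}\inc_{\alpha}(g_{n})\bigr)\in UkP$; discarding finitely many of the generators $\tilde g_{m}$ then exhibits a set of the form $\fpi(\tilde g_{m})$ with $\tilde g_{m}\to\infty$ inside $UkP$. Since the sub-IP-ring and the limit $k$ are fixed once and for all, the same $k$ works for every $U$.

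The step I expect to be the main obstacle is the passage to the sub-IP-ring: one needs to know that the $K$-valued assignment $\alpha\mapsto k_{\alpha}$ admits an IP-limit along \emph{some} sub-IP-ring, which is precisely the topological Hindman / IP-Ramsey phenomenon and depends on the compactness and metrizability of $K$ (hence on second countability of $G$). Once that is available, the remaining bookkeeping — that restricting a finite products set to a sub-IP-ring again gives a finite products set of a sequence escaping to infinity, and that a single limit $k$ handles all neighborhoods $U$ simultaneously — is routine.
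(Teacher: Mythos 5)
Your proposal is correct and follows essentially the same route as the paper's proof: build $\fpi(g_n)\subset T=KP$ via Lemma~\ref{lem:largeShiftsForThick} and the compact exhaustion, then use the IP-Ramsey theorem of \cite{MR833409} to extract a sub-IP-ring along which $k_\alpha\to k$, so that $\inc_\alpha(g_n)\in UkP$ for sufficiently large $\alpha$. Your explicit regrouping into generators $\tilde g_m$ along the sub-IP-ring just spells out bookkeeping the paper leaves implicit.
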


Call any subset $E$ of $G$ satisfying the conclusion of Theorem~\ref{thm:straus-set} a \define{Straus set}.
It follows from Lemma~\ref{psContainsShiftedFP} that any Straus set $E$ is not piecewise syndetic.
In particular, $E$ is not syndetic, and therefore its complement is thick.
(This follows from the proof of Lemma~\ref{lem:synd-thick-dual}.)
Now, any thick set $T$ has full density with respect to some \folner{} sequence in $G$.
Indeed, for any \folner{} sequence $\Phi$ in $G$, one can find for each $N$ some $h_N \in G$ such that $\Phi_N h_N \subset T$, so that $T$ has full density with respect to the \folner{} sequence $N \mapsto \Phi_N h_N$.
Thus it is impossible for a Straus set to have positive upper density with respect to every left \folner{} sequence.

\section{Non piecewise-syndetic sets with large density}
\label{sec:Extras}

By the remarks at the end of Section~\ref{sec:notvirtuallyWM} we see that Straus sets are not piecewise-syndetic.
However, Straus sets only exist in amenable groups whose von Neumann kernel is not cocompact.
In this section we prove Theorem~\ref{thm:large-lower-non-PWS}, which states that it is possible in any \lcsca{} group that is not compact to construct a non piecewise-syndetic set with positive lower density.
The proof of Theorem~\ref{thm:large-lower-non-PWS} reqires an ample supply of syndetic sets.

\begin{lemma}
\label{lem:greedy}
Let $G$ be a group, $S\subset G$ be any set, and $F\subset G$ be a non-empty set.
Then there exists a subset $S'\subset S$ such that
\begin{enumerate}
\item\label{item:cover} $F^{-1} F S' \supseteq S$;
\item\label{item:disjoint-shifts} $f_{1}S' \cap f_{2}S' = \emptyset$ for any $f_{1},f_{2}\in F$, $f_{1}\neq f_{2}$.
\end{enumerate}
\begin{proof}
The collection
\begin{equation*}
\{ T \subset S \,:\, f_1 T \cap f_2 T = \emptyset \textrm{ for any } f_1 \ne f_2 \in F \}
\end{equation*}
is closed under increasing unions, hence it contains a maximal element $S'$ by Zorn's Lemma.
Suppose that condition \eqref{item:cover} fails for $S'$, that is, there exists $s\in S\setminus F^{-1}F S'$.
We will show that the set $S'':=S'\cup\{s\}$ also satisfies condition \eqref{item:disjoint-shifts}, thereby contradicting the maximality of $S'$.
To this end let $f_{1},f_{2}$ be two distinct elements of $F$.
We have
\[
f_{1}S'' \cap f_{2}S''
=
(f_{1}S' \cap f_{2}S') \cup (f_{1}S' \cap f_{2}\{s\}) \cup (f_{1}\{s\}\cap f_{2}S') \cup (f_{1}\{s\} \cap f_{2}\{s\})
=
\emptyset
\]
by the assumptions that $S'$ satisfies \eqref{item:disjoint-shifts} and that $s\not\in F^{-1}FS$, as required.
\end{proof}
\end{lemma}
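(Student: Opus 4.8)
The plan is to obtain $S'$ by a greedy maximality argument. Consider the collection
\[
\mathcal{P} = \{ T \subseteq S : f_1 T \cap f_2 T = \emptyset \text{ for all } f_1 \ne f_2 \text{ in } F \},
\]
partially ordered by inclusion. First I would check that $\mathcal{P}$ is nonempty (the empty set always lies in $\mathcal{P}$; when $S \ne \emptyset$ so does any singleton, since $f_1 s = f_2 s$ forces $f_1 = f_2$) and that it is closed under unions of chains: if some $x$ belonged to $f_1\big(\bigcup_i T_i\big) \cap f_2\big(\bigcup_i T_i\big)$ for a chain $(T_i)$ in $\mathcal{P}$, then $x = f_1 t = f_2 t'$ with $t, t'$ lying in a common member $T_j$ of the chain, contradicting $T_j \in \mathcal{P}$. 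By Zorn's Lemma $\mathcal{P}$ has a maximal element $S'$, which by definition already satisfies the disjointness condition \eqref{item:disjoint-shifts}.

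It remains to show that a maximal such $S'$ also satisfies the covering condition \eqref{item:cover}, and here I would argue by contradiction. Suppose there is some $s \in S$ with $s \notin F^{-1}F S'$; since $\idG \in F^{-1}F$ (as $F \ne \emptyset$) this forces $s \notin S'$. I claim $S'' := S' \cup \{s\}$ still lies in $\mathcal{P}$, contradicting maximality. To see this, fix distinct $f_1, f_2$ in $F$ and expand
\[
f_1 S'' \cap f_2 S'' = (f_1 S' \cap f_2 S') \cup (f_1 S' \cap \{f_2 s\}) \cup (\{f_1 s\} \cap f_2 S') \cup (\{f_1 s\} \cap \{f_2 s\}).
\]
The first set is empty because $S' \in \mathcal{P}$, and the last because $f_1 \ne f_2$. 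If $f_2 s$ lay in $f_1 S'$, say $f_2 s = f_1 s'$ with $s' \in S'$, then $s = f_2^{-1} f_1 s' \in F^{-1}F S'$, a contradiction; the third set is handled symmetrically. Hence $f_1 S'' \cap f_2 S'' = \emptyset$ for all distinct $f_1, f_2 \in F$, so $S'' \in \mathcal{P}$, contradicting maximality of $S'$. Therefore no such $s$ exists, i.e. $F^{-1}F S' \supseteq S$.

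I do not expect a genuine obstacle here, as this is a routine maximality argument; the step that needs the most care is verifying the two closure properties of $\mathcal{P}$ (nonemptiness and stability under chain unions) and, in the contradiction step, being precise about why each cross-term $f_1 S' \cap \{f_2 s\}$ vanishes and extracting from a hypothetical overlap the membership $s \in F^{-1}F S'$ that yields the contradiction. One could instead well-order $S$ and add points greedily by transfinite recursion, but the Zorn's Lemma formulation keeps the bookkeeping minimal.
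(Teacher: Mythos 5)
Your proof is correct and follows essentially the same route as the paper's: a Zorn's Lemma maximality argument on the collection of subsets of $S$ with pairwise disjoint $F$-shifts, followed by the same contradiction in which a point $s \notin F^{-1}FS'$ is adjoined and the four cross-terms are checked. You simply spell out the chain-union and cross-term verifications in a bit more detail than the paper does.
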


\begin{lemma}
\label{lem:small-syndetic-lcsc}
Let $G$ be a \lcsca{} group that is not compact.
Then for every relatively compact neighborhood $U$ of the identity there exists a decreasing sequence $S_{1}\supset S_{2}\supset \cdots$ of discrete, syndetic subsets of $G$ such that $\upbdens(US_{n})\to 0$ as $n\to\infty$. 
\end{lemma}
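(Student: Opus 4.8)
The plan is to build the sets $S_n$ by a greedy packing procedure resting on Lemma~\ref{lem:greedy}, and then to control $\upbdens(US_n)$ by a translate-counting estimate. After replacing $U$ by its closure we may assume $U$ is a compact neighborhood of $\idG$, since $US_n$ only grows under this replacement. Using that $G$ is not compact, one first fixes a sequence $g_1,g_2,\dots$ in $G$ with the sets $g_kU$ pairwise disjoint: at stage $k$ the set $\bigcup_{i\le k}g_iUU^{-1}$ is relatively compact, hence a proper subset of $G$, so $g_{k+1}$ can be chosen outside it. One then picks an increasing sequence $F_1\subset F_2\subset\cdots$ of compact neighborhoods of $\idG$ with $g_1U\cup\cdots\cup g_nU\subset F_n$ for every $n$. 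Put $S_0:=G$ and define $S_n$ inductively by applying Lemma~\ref{lem:greedy} with $S=S_{n-1}$ and $F=F_n$: this yields $S_n\subset S_{n-1}$ with $F_n^{-1}F_nS_n\supseteq S_{n-1}$ and with the sets $\{F_ns:s\in S_n\}$ pairwise disjoint (the latter being a restatement of the second property in Lemma~\ref{lem:greedy}, since for $s\ne s'$ in $S_n$ one must have $s(s')^{-1}\notin F_n^{-1}F_n$). The inclusion $S_n\subset S_{n-1}$ gives the decreasing property; iterating $F_n^{-1}F_nS_n\supseteq S_{n-1}$ down to $S_0=G$ shows each $S_n$ is syndetic; and since $F_n$ is a neighborhood of $\idG$, disjointness of $\{F_ns\}$ forces $S_n$ to be discrete, because an open neighborhood of $\idG$ contained in $F_n$, translated by $s$, meets $S_n$ only in $s$.

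The remaining point is that $\upbdens(US_n)\to0$, and it suffices to prove that $\upperdens_\Phi(US_n)\le 1/n$ for every \emph{left} Reiter sequence $\Phi$, which a fortiori covers every two-sided Reiter sequence. The mechanism is this: for $1\le i\le n$ we have $g_iU\subseteq F_n$, so combining the pairwise disjointness of the $g_iU$ with that of the $F_ns$ makes the family $\{g_iUs:1\le i\le n,\ s\in S_n\}$ pairwise disjoint in $G$. Hence $\sum_{i=1}^n\int \ind{g_iUS_n}\,\Phi_N\intd\haar\le\int\Phi_N\intd\haar=1$ for every $N$. Using left-invariance of $\haar$ to substitute $x\mapsto g_ix$ one gets $\int \ind{g_iUS_n}\Phi_N\intd\haar=\int \ind{US_n}\cdot\lmult_{g_i^{-1}}\Phi_N\intd\haar\ge\int \ind{US_n}\Phi_N\intd\haar-\|\lmult_{g_i^{-1}}\Phi_N-\Phi_N\|_1$, and summing over $i$ gives $\int \ind{US_n}\Phi_N\intd\haar\le\frac1n\bigl(1+\sum_{i=1}^n\|\lmult_{g_i^{-1}}\Phi_N-\Phi_N\|_1\bigr)$; letting $N\to\infty$, the Reiter property makes the error terms vanish, so $\upperdens_\Phi(US_n)\le 1/n$.

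The verifications of discreteness, syndeticity, and of measurability of $US_n$ (a discrete subset of a second countable group is countable, so $US_n$ is a countable union of compact sets) are routine. The step I expect to require the most care is the density estimate of the second paragraph, specifically arranging it so that a \emph{single} estimate handles all two-sided Reiter sequences simultaneously without assuming $G$ unimodular; the key is to carry out the packing and counting with left translates $g_iUs$ and the left Haar measure, so that no modular factor intervenes. This is really the heart of the argument, everything else being bookkeeping.
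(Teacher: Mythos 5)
Your proof is correct and follows essentially the same route as the paper's: the same greedy packing via Lemma~\ref{lem:greedy} applied with $F$ built from pairwise disjoint translates $g_iU$, followed by the same disjoint-translates counting argument for the density bound (which the paper leaves implicit and you spell out, correctly reducing to left Reiter sequences). The only cosmetic difference is that you enlarge $F$ to a compact neighborhood of the identity before invoking Lemma~\ref{lem:greedy}, whereas the paper takes $F=g_1U\cup\dots\cup g_{n+1}U$ directly and reads off discreteness from the disjointness of the sets $g_1Us$, $s\in S_{n+1}$.
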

\begin{proof}
Put $S_{0}=G$ and let $g_{1},g_{2},\dots$ be a sequence in $G$ such that the sets $g_{n}U$ are pairwise disjoint.
Suppose that $S_{n}$ has been constructed for some $n$.
Let $S_{n+1}$ be given by Lemma~\ref{lem:greedy} with $F=g_{1}U \cup\dots\cup g_{n+1}U$ and $S=S_{n}$.
The set $F$ is relatively compact, so it follows from syndeticity of $S_{n+1}$ and Part~\ref{item:cover} of Lemma~\ref{lem:greedy} that $S_{n+1}$ is syndetic.
By Part~\ref{item:disjoint-shifts} of Lemma~\ref{lem:greedy} the sets $g_{i}Us$ are pairwise disjoint for $i=1,\dots,n+1$ and $s\in S_{n+1}$.
This implies that $S_{n+1}$ is discrete and $\upbdens(US_{n+1})\leq\frac{1}{n+1}$.
\end{proof}

The final ingredient in the proof of Theorem~\ref{thm:large-lower-non-PWS} is an appropriate version of Lemma~\ref{lem:union-cofinite}.

\begin{lemma}
\label{lem:cofinite2}
Let $G$ be a \lcsca{} group with a (left, right, or two-sided) \folner{} sequence $\Phi$ and let $A_{i}$ be a sequence of measurable subsets of $G$.
Then for every $\epsilon>0$ there exist cocompact subsets $A_{i}'\subset A_{i}$ such that $\upperdens_{\Phi}(\cup_{i}A_{i}') \leq \sum_{i}\upperdens_{\Phi}(A_{i}) + \epsilon$.
\end{lemma}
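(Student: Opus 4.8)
The plan is to follow the proof of Lemma~\ref{lem:union-cofinite}, but in a streamlined way: since we no longer require any density to exist and only need an upper bound on the upper density of the union, there is no need to track the finite unions $A_1\cup\dots\cup A_n$, and countable subadditivity of Haar measure does all the bookkeeping. If $\sum_i\upperdens_\Phi(A_i)=\infty$ there is nothing to prove, so I would assume this sum is finite.

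First, for each $i$ I would use the definition of $\limsup$ to pick $s_i\in\N$ (which may be taken strictly increasing) with
\[
\frac{\haar(A_i\cap\Phi_N)}{\haar(\Phi_N)} < \upperdens_\Phi(A_i) + \frac{\epsilon}{2^i}
\quad\text{for all } N\ge s_i,
\]
and then set $A_i' = A_i\setminus(\Phi_1\cup\dots\cup\Phi_{s_i})$. As each $\Phi_N$ is compact, $A_i\setminus A_i'$ is relatively compact, so $A_i'$ is a cocompact subset of $A_i$. The key point is that the displayed estimate then holds with $A_i'$ in place of $A_i$ and for \emph{every} $N$: for $N\ge s_i$ this is immediate from $A_i'\subseteq A_i$, while for $N<s_i$ one has $\Phi_N\subseteq\Phi_1\cup\dots\cup\Phi_{s_i}$, hence $A_i'\cap\Phi_N=\emptyset$ and the left side is $0$.

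Finally, for every $N$, countable subadditivity of $\haar$ gives
\[
\frac{\haar\big((\bigcup_i A_i')\cap\Phi_N\big)}{\haar(\Phi_N)}
\le \sum_{i=1}^{\infty}\frac{\haar(A_i'\cap\Phi_N)}{\haar(\Phi_N)}
\le \sum_{i=1}^{\infty}\Big(\upperdens_\Phi(A_i)+\frac{\epsilon}{2^i}\Big)
= \sum_{i=1}^{\infty}\upperdens_\Phi(A_i)+\epsilon,
\]
and taking $\limsup_{N\to\infty}$ finishes the proof. There is no genuine obstacle here; the only things to get right are the elementary dichotomy on $N$ versus $s_i$ (which simultaneously makes $A_i'$ cocompact and erases the contribution of the small indices $N$) and the observation that nonnegativity of all terms lets one pass straight to the infinite sum without truncating. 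The argument uses only the sequence of compact sets $\Phi_N$, so it is the same whether $\Phi$ is a left, right, or two-sided \folner{} sequence.
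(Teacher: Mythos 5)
Your proposal is correct and follows essentially the same route as the paper: choose $s_i$ from the definition of the $\limsup$, remove $\Phi_1\cup\dots\cup\Phi_{s_i}$ to obtain the cocompact subsets $A_i'$, and bound $\haar\big((\bigcup_i A_i')\cap\Phi_N\big)$ by subadditivity, the only cosmetic difference being that the paper restricts the sum to indices with $s_i<N$ while you note that the terms with $N<s_i$ vanish. Both versions are fine.
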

\begin{proof}
By definition of the upper density, for every $i$ there exists $s_{i}$ such that
\[
\frac{\haar(A_{i} \cap \Phi_{N})}{\haar(\Phi_{N})} < \upperdens_{\Phi}(A_{i}) + \frac{\epsilon}{2^{i}}
\]
for every $N>s_{i}$.
Consider the cocompact subsets $A_{i}' := A_{i} \setminus \cup_{n=1}^{s_{i}}\Phi_{n}$.
Then for every $N$ we have
\begin{multline*}
\haar(\bigcup_{i} A_{i}' \cap \Phi_{N})
= \haar(\bigcup_{i : s_{i}<N} A_{i}' \cap \Phi_{N})
\leq \haar(\bigcup_{i : s_{i}<N} (A_{i} \cap \Phi_{N}))
\leq \sum_{i : s_{i}<N} \haar( A_{i} \cap \Phi_{N} )\\
< \sum_{i : s_{i}<N} (\upperdens_{\Phi}(A_{i}) + \epsilon/2^{i}) \haar(\Phi_{N})
\leq \haar(\Phi_{N}) \big( \sum_{i}\upperdens_{\Phi}(A_{i}) + \epsilon \big)
\end{multline*}
as required.
\end{proof}

\begin{proof}[Proof of Theorem~\ref{thm:large-lower-non-PWS}]
Let $(g_i)_{i \in \N}$ be a dense subset of $G$, let $U$ be a symmetric, relatively compact neighborhood of $\idG$, and let $G \supset S_{1} \supset S_{2} \supset \cdots$ be the decreasing sequence of syndetic sets from Lemma~\ref{lem:small-syndetic-lcsc}.
Passing to a subsequence we may assume $\sum_{i} \upbdens(S_{i}) < \epsilon$.
By Lemma~\ref{lem:cofinite2} we obtain cocompact subsets $S_{i}' \subset US_{i}$ such that $\upperdens_{\Phi}(\cup \{ g_{i} S_{i}' : i \in \N \}) < \epsilon$.
Consider the set
\begin{equation*}
Q := G \setminus \cup \{ g_{i} S_{i}' \,:\, i \in \N \}.
\end{equation*}
It follows from the construction that $Q$ has lower density at least $1-\epsilon$ with repsect to $\Phi$.
We claim that $Q$ is not piecewise-syndetic.
Suppose $Q$ is piecewise-syndetic, that is, that there exists a compact set $K\subset G$ such that $T:= KQ$ is a thick set.
By compactness we have $K\subset Ug_1^{-1} \cup \cdots \cup Ug_N^{-1}$ for some $N\in\N$.
Thus
\begin{equation*}
KQ
\subseteq
U g_1^{-1} Q \cup \cdots \cup U g_N^{-1} Q
\subseteq
\cup_{i=1}^N \cup_{u \in U} u g_i^{-1} (G \setminus g_i S_i')
\end{equation*}
and so
\begin{equation*}
G \setminus KQ
\supseteq
\cap_{i=1}^{N} \cap_{u\in U} uS_{i}'
\supseteq
S_{N} \setminus \cup_{i=1}^{N} U(US_{i}\setminus S_{i}').
\end{equation*}
This is a cocompact subset of the syndetic set $S_{N}$, hence a syndetic set, contradicting thickness of $KQ$.
Hence $Q$ is not piecewise syndetic.
\end{proof}

We next show, using the (rather deep) Jewett--Krieger theorem for countable, amenable groups, that if $G$ is a countable, infinite, amenable group then the set $E$ obtained in Theorem~\ref{thm:large-lower-non-PWS} can be taken to have density.

\begin{theorem}
\label{thm:large-non-PWS}
For any countably infinite, amenable group $G$, any left \folner{} sequence $\Phi$ in $G$, and any $\epsilon>0$ there is a subset $Q$ of $G$ with $\dens_\Phi(Q) > 1 - \epsilon$ that is not piecewise syndetic.
\end{theorem}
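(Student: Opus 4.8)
The plan is to follow the scheme of the proof of Theorem~\ref{thm:large-lower-non-PWS}, but to manufacture $Q$ by deleting from $G$ return-time sets of small-measure open sets in a \emph{uniquely ergodic minimal} system, in place of the greedily constructed syndetic sets of Lemma~\ref{lem:small-syndetic-lcsc}. Unique ergodicity, via Lemma~\ref{densityIsMeasure}, forces the deleted sets and their finite unions to have a density, and then Lemma~\ref{lem:union-cofinite} upgrades this to a density for the whole deleted set, hence for $Q$; this is the only point where we improve on Theorem~\ref{thm:large-lower-non-PWS}, which only gave a lower density bound.

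First I would fix a free ergodic measure-preserving action of $G$ on a standard probability space (for instance the Bernoulli action $G\curvearrowright\{0,1\}^{G}$) and apply the Jewett--Krieger theorem for countable amenable groups to obtain a strictly ergodic (minimal and uniquely ergodic) topological dynamical system $(X,G)$ modelling it, with unique invariant Borel probability measure $\mu$. Since the model is infinite and minimal, $\mu$ is non-atomic and has full support. Enumerate the non-empty finite subsets of $G$ as $K_{1},K_{2},\dots$ (possible since $G$ is countable). For each $n$, Lemma~\ref{lem:smallZeroBoundaryBalls} supplies a non-empty open set $A_{n}\subset X$ with $\mu(A_{n})<\epsilon 2^{-n-1}/|K_{n}|$ and $\mu(\partial A_{n})=0$. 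Fix $x_{0}\in X$, write $K_{n}^{-1}A_{n}:=\bigcup_{k\in K_{n}}k^{-1}A_{n}$, and set
\[
C_{n}:=\ret_{K_{n}^{-1}A_{n}}(x_{0})=\{\,g\in G:gx_{0}\in K_{n}^{-1}A_{n}\,\}.
\]
Interpreting the left \folner{} sequence $\Phi$ as the left Reiter sequence \eqref{eqn:folnerToAsymptotic}, the two notions of $\dens_{\Phi}$ coincide, so Lemma~\ref{densityIsMeasure} applies. Each $K_{n}^{-1}A_{n}$ is open with $\mu(K_{n}^{-1}A_{n})\le|K_{n}|\mu(A_{n})<\epsilon 2^{-n-1}$, and since $\partial(K_{n}^{-1}A_{n})\subset\bigcup_{k\in K_{n}}k^{-1}\partial A_{n}$ it has $\mu$-null boundary; hence $\dens_{\Phi}(C_{n})=\mu(K_{n}^{-1}A_{n})<\epsilon 2^{-n-1}$. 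Likewise $B_{N}:=C_{1}\cup\dots\cup C_{N}=\ret_{\cup_{n\le N}K_{n}^{-1}A_{n}}(x_{0})$ is a return-time set of an open set with null boundary, so $\dens_{\Phi}(B_{N})$ exists and equals $\mu(\cup_{n\le N}K_{n}^{-1}A_{n})$, which increases to $\alpha:=\mu(\cup_{n}K_{n}^{-1}A_{n})\le\sum_{n}\epsilon 2^{-n-1}=\epsilon/2$. Since $\sum_{n}\dens_{\Phi}(C_{n})<\infty$, Lemma~\ref{lem:union-cofinite} yields cofinite subsets $C_{n}'\subset C_{n}$ whose union $C:=\cup_{n}C_{n}'$ satisfies $\dens_{\Phi}(C)=\lim_{N}\dens_{\Phi}(B_{N})=\alpha$. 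Put $Q:=G\setminus C$; then $\dens_{\Phi}(Q)=1-\alpha\ge 1-\epsilon/2>1-\epsilon$.

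It remains to show $Q$ is not piecewise syndetic. Suppose $KQ$ is thick for some (necessarily non-empty) finite $K\subset G$, and choose $m$ with $K_{m}=K$. If $g\in\ret_{A_{m}}(x_{0})$ then for every $k\in K_{m}$ we have $(k^{-1}g)x_{0}=k^{-1}(gx_{0})\in k^{-1}A_{m}\subset K_{m}^{-1}A_{m}$, so $K_{m}^{-1}g\subset C_{m}$; thus $\ret_{A_{m}}(x_{0})\subset\{g:K_{m}^{-1}g\subset C_{m}\}$. Writing $C_{m}'=C_{m}\setminus F_{m}$ with $F_{m}$ finite and noting $\{g:K_{m}^{-1}g\cap F_{m}\ne\emptyset\}=K_{m}F_{m}$, we get
\[
G\setminus KQ=\{\,g:K^{-1}g\subset C\,\}\supset\{\,g:K_{m}^{-1}g\subset C_{m}'\,\}\supset\ret_{A_{m}}(x_{0})\setminus K_{m}F_{m}.
\]
Since $(X,G)$ is minimal and $A_{m}$ is non-empty open, $\ret_{A_{m}}(x_{0})$ is syndetic, and deleting the finite set $K_{m}F_{m}$ leaves it syndetic; hence $G\setminus KQ$ is syndetic. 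But a syndetic set meets every thick set (Lemma~\ref{lem:synd-thick-dual}), so $(G\setminus KQ)\cap KQ\ne\emptyset$, a contradiction. Therefore $Q$ is not piecewise syndetic.

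The main obstacle is conceptual rather than computational: one must realize that Jewett--Krieger furnishes uniquely ergodic minimal systems for \emph{every} countable amenable group — in particular for WM groups, which carry no non-constant almost periodic functions and hence none of the infinite equicontinuous minimal systems exploited in Theorem~\ref{thm:straus-set} — and that building the finite set $K_{n}$ into the target set $K_{n}^{-1}A_{n}$ makes the syndeticity argument of Theorem~\ref{thm:large-lower-non-PWS} go through while keeping every relevant density both small and, crucially, convergent. The two ancillary facts used (that a cofinite subset of a syndetic set is syndetic, and that return-time sets of non-empty open sets in a minimal system are syndetic) are routine.
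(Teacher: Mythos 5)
Your proof is correct and follows essentially the same route as the paper: a Jewett--Krieger strictly ergodic model, small zero-boundary balls from Lemma~\ref{lem:smallZeroBoundaryBalls}, Lemma~\ref{densityIsMeasure} to give the deleted sets and their finite unions a density, Lemma~\ref{lem:union-cofinite} to pass to the infinite union, and the observation that the complement of $KQ$ contains a cofinite subset of a syndetic return-time set. The only (harmless) repackaging is that you index the deleted sets by the finite subsets $K_{n}$ of $G$ and absorb $K_{n}$ into the target set $K_{n}^{-1}A_{n}$, whereas the paper deletes single shifts $g_{i}\ret_{A_{i}}(x)$ over an enumeration of $G$ and uses nested balls so that intersections of the return-time sets remain syndetic.
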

\begin{proof}
Fix $\epsilon > 0$.
Consider an ergodic action of $G$ on a non-atomic space, for example the action of $G$ on $\{ 0,1 \}^G$ given by $(g \cdot \ind{B})(x) = \ind{B}(xg)$ equipped with a Bernoulli measure.
By Rosenthal's Jewett--Krieger theorem \cite{Rosenthal-thesis} this action admits a topological model $(X,G)$ that is uniquely ergodic.
Let $\mu$ be the unique invariant probability measure.
Since $X$ is infinite, the measure $\mu$ is non-atomic.
Fix $x$ in the support of $\mu$.
For each $n$ in $\N$, Lemma~\ref{lem:smallZeroBoundaryBalls} yields an open ball $A_n \subset X$ centered at $x$ with $\mu(A_n)<\epsilon/2^n$ and $\mu(\partial A_n)=0$.
Since $x$ is in the support of $\mu$, each $A_n$ has positive measure.
Applying Lemma~\ref{densityIsMeasure}, we obtain $\dens_\Psi(\ret_{A_n}(x)) = \mu(A_n)$ for every left \folner{} sequence $\Phi$.
Thus, for each $n$, the set $S_n := \ret_{A_n}(x)$ has positive density with respect to every left \folner{} sequence.
It follows from this that each of the sets $S_n$ is syndetic.

Let $(g_i)_{i \in \N}$ be an enumeration of $G$.
Put $B_i = g_1 S_1 \cup \cdots g_i S_i$.
We have
\begin{equation*}
B_i = \{ g \in G \,:\, gx \in g_1 A_1 \cup \cdots \cup g_i A_i \} = \ret_{Y_i}(x)
\end{equation*}
where $Y_i := g_1 A_1 \cup \cdots \cup g_i A_i$.
Since $\mu(\partial Y_i) = 0$ it follows from Lemma~\ref{densityIsMeasure} that $\dens_\Phi(B_i)$ exists.
Applying Lemma~\ref{lem:union-cofinite}, we obtain cofinite subsets $S_i' \subset S_i$ such that $\dens_\Phi(\cup \{ g_i S_i' \,:\, i \in \N \}) < \epsilon$.

Put $Q = G \setminus \cup \{ g_i S_i' \,:\, i \in \N \}$.
The density of $Q$ exists and is at least $1 - \epsilon$.
Arguing as in the proof of Theorem~\ref{thm:large-lower-non-PWS} shows that $Q$ is not piecewise syndetic.
To this end, note that $S_{i_1} \cap \cdots \cap S_{i_k} = S_l$ where $l = \max \{ i_1,\dots,i_k \}$.
Thus the intersection is syndetic.
Hence its cofinite subset $S_{i_1}' \cap \cdots \cap S_{i_k}'$ is also syndetic.
\end{proof}

We will use sets whose existence is ensured by Theorem~\ref{thm:large-non-PWS} in order to construct non-trivial actions of $G$.
Denote by $\ind{\emptyset}$ the function $G \to \{ 0,1 \}$ mapping every element of $G$ to $0$.

\begin{proposition}
\label{prop:non-atomic-measure}
Let $G$ be a countably infinite amenable group and let $\Phi$ be a \folner{} sequence in $G$. Suppose $Q$ is a subset of $G$ that has positive upper density but is not piecewise-syndetic.
Let $X$ be the orbit closure of $\ind{Q}$ in $\{ 0,1 \}^G$ under the action of $G$ on $\{ 0,1 \}^G$ given by $(g \cdot \ind{B})(x) = \ind{B}(xg)$.
Then $\{ \ind{\emptyset} \}$ is the only minimal subsystem of $X$, but there is a non-atomic $G$-invariant probability measure on $X$.
\end{proposition}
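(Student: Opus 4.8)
The plan is to prove the topological assertion first and then derive the measure-theoretic one from it. Write $O(\ind{Q})$ for the orbit of $\ind{Q}$, so that $X=\overline{O(\ind{Q})}$.

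\emph{The only minimal subsystem is $\{\ind{\emptyset}\}$.} The point $\ind{\emptyset}$ is fixed, since $(g\cdot\ind{\emptyset})(x)=\ind{\emptyset}(xg)=0$ for all $g,x$, so $\{\ind{\emptyset}\}$ is a minimal subsystem; and every nonempty compact $G$-space contains a minimal subsystem by Zorn's lemma. Hence it is enough to show that a minimal subsystem $Y\subseteq X$ with $Y\neq\{\ind{\emptyset}\}$ would make $Q$ piecewise syndetic, contradicting the hypothesis. Given such a $Y$, pick $\ind{B_0}\in Y$ with $B_0\neq\emptyset$; replacing $\ind{B_0}$ by $b\cdot\ind{B_0}=\ind{B_0b^{-1}}\in Y$ for some $b\in B_0$, we may assume there is $\ind{B}\in Y$ with $\idG\in B$. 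The set $U=\{\ind{C}\in Y:\idG\in C\}$ is nonempty and open in $Y$. By minimality every point of $Y$ has dense orbit, so $\{g^{-1}U:g\in G\}$ covers $Y$; a finite subcover $Y=g_1^{-1}U\cup\cdots\cup g_n^{-1}U$ shows that $\{g\in G:g\cdot\ind{B}\in U\}$ is syndetic with witness $K=\{g_1^{-1},\dots,g_n^{-1}\}$ (for $h\in G$ we have $h\cdot\ind{B}\in g_i^{-1}U$ for some $i$, so $g_ih\cdot\ind{B}\in U$). But $g\cdot\ind{B}=\ind{Bg^{-1}}$ and $\idG\in Bg^{-1}$ precisely when $g\in B$, so $B$ is syndetic: $KB=G$. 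Finally, since $\ind{B}$ lies in the orbit closure of $\ind{Q}$, for every finite $E\subseteq G$ there is $h\in G$ with $Qh^{-1}\cap E=B\cap E$; taking $E=K^{-1}F$ for an arbitrary finite $F\subseteq G$ and using $KB=G$, one checks that $Fh\subseteq KQ$: each $f\in F$ equals $kb$ with $k\in K$, $b\in B$, whence $b=k^{-1}f\in E\cap B=E\cap Qh^{-1}$, so $bh\in Q$ and $fh=k(bh)\in KQ$. Thus $KQ$ is thick, so $Q$ is piecewise syndetic, a contradiction. Therefore every minimal subsystem of $X$ equals $\{\ind{\emptyset}\}$.

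\emph{A non-atomic invariant measure exists.} From the previous paragraph, $\ind{\emptyset}$ is the only point of $X$ with finite orbit: a finite orbit is a nonempty closed invariant set, hence contains the minimal subsystem $\{\ind{\emptyset}\}$, and $\ind{\emptyset}=g\cdot\ind{B}$ forces $B=\emptyset$. Consequently every atom of every $G$-invariant Borel probability measure on $X$ sits at $\ind{\emptyset}$: an atom of mass $c>0$ at $\ind{B}$ gives, by invariance, atoms of mass $c$ at every point of the orbit of $\ind{B}$, forcing that orbit to be finite. So it suffices to produce one $G$-invariant probability measure $\mu$ with $\mu(\{\ind{\emptyset}\})<1$; its normalized restriction to $X\setminus\{\ind{\emptyset}\}$ is then $G$-invariant (as $\{\ind{\emptyset}\}$ is invariant) and atomless. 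I would obtain such a $\mu$ by the usual averaging: set $\mu_N=\frac{1}{|\Phi_N|}\sum_{g\in\Phi_N}\delta_{g\cdot\ind{Q}}$ and, since $X$ is compact metric ($G$ being countable), pass to a subsequence along which $\frac{|Q\cap\Phi_N|}{|\Phi_N|}\to\upperdens_{\Phi}(Q)=:\delta>0$ and $\mu_N\to\mu$ weak-$*$. The \folner{} property of $\Phi$ makes $\mu$ $G$-invariant, and testing against the continuous cylinder function $f_0(\ind{C})=\ind{C}(\idG)$, for which $f_0(g\cdot\ind{Q})=\ind{Q}(g)$, gives $\int f_0\intd\mu=\delta$. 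Since $0\le f_0\le 1$ and $f_0(\ind{\emptyset})=0$, this forces $\mu(\{\ind{\emptyset}\})\le 1-\delta<1$.

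The step I expect to be the main obstacle is the topological one: extracting from the minimality of $Y$ a syndetic set $B$ with $\ind{B}\in X$, and then converting ``$B$ syndetic together with $\ind{B}$ in the orbit closure of $\ind{Q}$'' into piecewise syndeticity of $Q$. That is precisely the point at which the hypothesis ``$Q$ is not piecewise syndetic'' is used, and one must keep the left/right translation conventions straight throughout. Granting that, the measure-theoretic half is bookkeeping: identify the only possible atom as the fixed point $\ind{\emptyset}$, produce by \folner{} averaging an invariant measure giving $\ind{\emptyset}$ less than full mass, and delete its atomic part.
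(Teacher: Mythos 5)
Your proposal is correct and follows essentially the same route as the paper: you show a nontrivial minimal subsystem would contain a point $\ind{B}$ whose return times to the cylinder $\{\omega : \omega(\idG)=1\}$ are syndetic, convert $B$ syndetic plus $\ind{B}$ in the orbit closure of $\ind{Q}$ into piecewise syndeticity of $Q$, and then obtain the measure by \folner{} averaging, removing the atom at $\ind{\emptyset}$, and noting that any other atom would force a finite orbit and hence a second minimal subsystem. The only differences are organizational (the paper first proves that no point of $X$ is syndetic and fills in the atom and normalization details more tersely), so there is nothing to correct.
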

\begin{proof}
Let $\ind{B}$ be any point in $X$ and assume that $B$ is syndetic.
Let $F$ be a finite set such that $FB = G$.
Let $H$ be any finite subset of $G$.
Since $\ind{B}$ is in the orbit closure of $\ind{Q}$ we can find $g \in G$ such that $\ind{Qg}$ and $\ind{B}$ agree on the finite set $F^{-1}H$.
In particular,
\begin{align*}
FQg
&\supset F(F^{-1}H \cap B)
\supset \bigcup_{f\in F} f (f^{-1}H \cap B)
= H \cap FB
= H,
\end{align*}
so $FQ$ contains $Hg^{-1}$.
Since $H$ was arbitrary, $Q$ must be piecewise-syndetic, a contradiction.
Hence no point of $X$ can correspond to a syndetic set.

Suppose now that $Y$ is a minimal subsystem of $X$ different from $\{ \ind{\emptyset} \}$.
Then the set $C = \{ \omega \in \{ 0,1 \}^G \,:\, \omega(\idG) = 1 \}$ has non-empty intersection with $Y$.
Since $Y$ is minimal, every point $\ind{B}\in Y$ visits $C$ syndetically.
In particular, every point corresponds to a syndetic set, a contradiction.

For the second assertion, let $\Psi$ be a sub-sequence of $\Phi$ such that $\dens_\Psi(Q) = \lowerdens_\Phi(Q)$ and let $\mu$ be any limit point of the sequence
\begin{equation*}
\mu_N = \frac{1}{|\Psi_N|} \sum_{g \in \Psi_N} \delta_{g \cdot \ind{Q}}
\end{equation*}
of probability measures on $X$.
We have $\mu(C) = \dens_\Psi(Q) > 0$, so $\mu(\{ \ind{\emptyset} \})<1$.

Hence there is a $G$ invariant probability measure with no point mass at $0$.
Suppose now that it has a point mass at some other point $\ind{B}$.
Then the orbit of $\ind{B}$ is finite, so it is a minimal subsystem of $X$ different from $\{ \ind{\emptyset} \}$, a contradiction.
\end{proof}

We conclude with an example demonstrating that, in general, the invariant probability measure in Proposition~\ref{prop:non-atomic-measure} will not have full support on $X$, even in the case of a $\Z$-action.

\begin{example}
Let $Q \subset \Z$ be a non piecewise-syndetic set of positive lower density.
(One can take, for instance, a Straus set in $\Z$.)
Since $Q$ is not syndetic, its characteristic function $\ind{Q}$ has two consecutive zeroes.
Translating $Q$ if necessary we may assume that $0,1\not\in Q$.
Consider now the set $Q':=2Q \cup (2Q+1)\cup \{1\}$.
This set still has positive lower density, and is not piecewise-syndetic since it is contained in the union of three translates of the non piecewise-syndetic set $2Q$.
Moreover, $1$ is the only member of $Q'$ both of whose neighbors are not in $Q'$.
Let now $X\subset \{0,1\}^{\Z}$ be the orbit closure of $\ind{Q'}$ and let $C := \{ \omega \in \{ 0,1 \}^\Z \,:\, \omega(0)=\omega(2)=0,\omega(1) = 1 \}$.
Then the cylinder sets $X\cap (C+n) = \{\ind{Q'+n}\}$ are disjoint singletons that are mapped to each other under the action of $\Z$.
Hence any $\Z$-invariant probability measure assigns zero measure to them.
Since they are open, an invariant probability measure cannot have full support.
\end{example}

\printbibliography
\end{document}